\pgfplotsset{compat=1.18}
\newtheorem{theorem}{Theorem}[section]
\newtheorem{proposition}[theorem]{Proposition}
\newtheorem{corollary}[theorem]{Corollary}
\newtheorem{lemma}[theorem]{Lemma}
\newtheorem{theoremA}{Theorem}
\newtheorem{conjecture}[theorem]{Conjecture}
\theoremstyle{definition}
\newtheorem{definition}[theorem]{Definition}
\theoremstyle{definition}
\newtheorem{remark}[theorem]{Remark}
\newcommand{\R}{\mathbb{R}}
\newcommand{\C}{\mathbb{C}}
\newcommand{\N}{\mathbb{N}}
\newcommand{\Z}{\mathbb{Z}}
\newcommand{\T}{\mathbb{T}}
\newcommand{\bx}{\mathbf{x}}
\newcommand{\by}{\mathbf{y}}
\newcommand*\diff{\mathop{}\!\mathrm{d}}
\title{On the regularity of Fourier interpolation formulas}
\author{Gabriele Cassese}
\address{Mathematical Institute, Andrew Wiles Building, University of Oxford. OX2 6GG - Oxford, United Kingdom} 
\email{gabriele.cassese@maths.ox.ac.uk}
\author{Jo\~ao P. G. Ramos}
\address{Department of Mathematics, Faculty of Sciences of the University of Lisbon. PT-1749-016 - Lisbon, Portugal}
\email{joaopgramos95@gmail.com}
\begin{document}

\begin{abstract} By applying new functional analysis tools in the framework of Fourier interpolation
formulas, such as sc-Fredholm operators and Schauder frames, we are able to improve and refine
several properties of these aforementioned formulas on the real line.

As two examples of our main contributions, we highlight: (i) that we may upgrade perturbed
interpolation bases all the way to the \textit{Schwartz space}, which shows that even the perturbed
interpolation formulas are as regular as the Radchenko–Viazovska case; (ii) that a certain subset
of the interpolation formulae considered by Kulikov–Nazarov–Sodin may actually be upgraded
to be convergent in the Schwartz class, giving a first partial answer to a question posed by the
authors of \cite{KNS}.

As a final contribution of this work, we also show that, if the perturbations are sufficiently
tiny, then even \textit{analyticity properties} of the basis functions are preserved. This shows, in particular, that any function that vanishes on all but finitely many of the (perturbed) nodes is
\textit{automatically analytic}, a feature previously only known to hold in supercritical contexts \cite{RS1, KNS} besides the Radchenko–Viazovska case \cite{RV}.
\end{abstract}

\maketitle

\section{Introduction}

\subsection{Context and background} 
The 
problem of reconstructing
a function from some of its values 
and the ones of its Fourier transform
is one that has been a very active
area of research for a long time,
in harmonic analysis as well as in
applied mathematics, being one of the pillars of modern \emph{signal processing theory}, with consequences not only in the field of signal processing, but also in quantum mechanics, x-ray crystallography, and several other areas - see, for instance, \cite{reichenbach,seelamantula,ismagilov} for some such instances. 

In mathematical terms, the problem can be formulated as follows: find two sets, $A$ and $B$, respectively in time and frequency space, such that knowing $f$ in $A$ 
and $\widehat f$ in $B$ is sufficient to determine the function $f$ uniquely among all functions in a particular class. This leads to the following definition:
\begin{definition}
Given a function space $X\subset C^0(\mathbb R)\cap L^1(\R)$ and a pair of sets $A,B\subset \mathbb R$, we say that $(A,B)$ is a Fourier uniqueness pair
for $X$ if for any $f\in X$,
\begin{equation*}
f_{|A}=\widehat f_{|B}=0\Rightarrow f\equiv 0.
\end{equation*}
\end{definition} 
Even if we knew how to solve the problem above - that is, if we knew how to characterise \emph{exactly} the sets $A,B$ as before that form a Fourier uniqueness pair for a given function space $X$ -, that alone would not be enough to consider the problem solved in a satisfying manner for applications. Indeed, whenever we seek to reconstruct a function from time-frequency measurements based on a pair of sets $(A,B)$, we need a form of algorithm to do so. In other words, in addition to knowing that $(A,B)$ forms a Fourier uniqueness pair for a function space $X$, we shall need an \emph{interpolation formula} of the kind
\begin{equation}\label{eq : interpolation}
f(x)=\sum_{a\in A} f(a) h_a(x)+\sum_{b\in B} \widehat f(b)g_b(x).
\end{equation}
When this happens, we say that $(A,B)$ is an $X-$interpolation pair; if the right-hand side converges in the topology of $X$, we say that it is a strong $X-
$interpolation pair.

As effective and important examples of different areas where the problem above enters in a non-trivial way, we may cite, in addition to the ones stated in the beginning, the following:

\vspace{2mm}

\noindent \textbf{I.} \textit{Crystalline measures:} following Meyer \cite{Meyer}, a \emph{crystalline measure} is a Radon measure $\mu$ on $\mathbb R$
with discrete support such that the same is true for its Fourier transform $\widehat \mu$. In addition to the inherent mathematical interest in such structures stemming from generalising the Poisson summation formula, crystalline measures also arise naturally in physics \cite{Lagarias}. 

In order to see how interpolation formulas link with crystalline measures, note that an interpolation formula such as
\begin{equation*}
f(x)=\sum_{a\in A} f(a) h_a(x)+\sum_{b\in B} \widehat f(b)g_b(x)
\end{equation*}
may be written as
\begin{equation*}
\delta_x-\sum_A h_a(x)\delta_a=\mathcal F\left(\sum g_b(x) \delta_b\right).
\end{equation*}
This emphasises the fact that, in order to understand crystalline structures, a deep understanding of interpolation formulas is needed - and vice-versa (see \autoref{sec : remarks}).

\vspace{2mm}

\noindent \textbf{II.} \textit{Heisenberg uniqueness pairs:} A \emph{Heisenberg uniqueness pair} is a pair $(\Gamma, \Lambda)$, where
$\Gamma$ is an algebraic curve in $\mathbb R^2$ and $\Lambda\subset \mathbb R^2$ a set, such that, if we have a Radon measure $\nu$ supported on $\Gamma$  absolutely continuous with respect to the induced Lebesgue measure on $\Gamma$ 
and satisfying $\widehat \nu_{|\Lambda}=0$, then $\nu=0$. 
This concept, introduced by Hendemalm and Montes-Rodriguez in \cite{HMR}, is directly motivated by the study of \emph{unique continuation} of partial differential equations.

Methods from the Fourier uniqueness pairs theory have been immensely useful here: in \cite{GR}, Fourier uniqueness pairs where used to construct Heisenberg uniqueness pairs with $\Gamma$ a parabola and in \cite{bakan20} A. Bakan \textit{et al.} proved, for certain special classes of functions,  that a set $\Lambda$ forming a Heisenberg uniqueness pair with the hyperbola $\Gamma$ is the same as showing that a certain pair of set arising from $\Lambda$ form a Fourier uniqueness pair in some even dimension. This idea has also been exploited recently in \cite{RamosSto, RSto, bakan23} and \cite{radchenko2024}.

\vspace{2mm}

\noindent \textbf{III.} \textit{Universal optimality:} The recently solved problem of universal optimality of the two lattices $E_8$ and the Leech lattice (in $\mathbb R^8$ and $\mathbb 
R^{24}$, respectively) can be reduced,  
through linear programming, to constructing special \emph{interpolation formulas} for radial functions in the respective dimensions, with nodes appropriately placed at the lengths of vectors in the lattices $E_8$ and $\Lambda_{24}$ \cite{cohn2022}. 
In particular, in the \emph{sphere packing problem}, solved in 2016 in dimension 8 and 24 \cite{viazovska2017sphere, cohn2017sphere}, the aforementioned interpolation formulas may be used to explicitly resolve the question of existence of particular functions optimising the Cohn-Elkies linear programming bounds \cite{cohn2003new}. 

\vspace{2mm}

A first, classical, example of interpolation pair is given by the celebrated result due to Shannon, Whittaker,
Nyquist and others:
\begin{theoremA}[\cite{Higgins}]
Let $f\in L^2$ be such that $\widehat f(x)=0$ for a.e. $x\not \in [-1/2,1/2]$. Then $f$ is an entire function; moreover, we have
\begin{equation*}
f(x)=\sum_{n \in \Z} f(n)\frac{\sin(\pi (x-n))}{\pi(x-n)}.
\end{equation*}
\end{theoremA}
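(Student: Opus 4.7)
The plan is to exploit that $\widehat f$ has compact support in $[-1/2,1/2]$ in two complementary ways: first to obtain analyticity of $f$ via Fourier inversion, and second to expand $\widehat f$ as a Fourier series on $[-1/2,1/2]$ whose coefficients are precisely the samples $f(n)$.

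First I would observe that, since $\widehat f\in L^2(\R)$ has support in the compact interval $[-1/2,1/2]$, it belongs to $L^1(\R)\cap L^2(\R)$ by Cauchy--Schwarz. Hence the Fourier inversion formula
\begin{equation*}
f(x)=\int_{-1/2}^{1/2}\widehat f(\xi)\,e^{2\pi i x\xi}\diff\xi
\end{equation*}
holds for a.e. $x$, and the right-hand side defines, after replacing $x$ by a complex variable $z$, an entire function of exponential type $\pi$ by differentiation under the integral sign (this is the Paley--Wiener direction). Thus $f$ has an entire extension and the formula above may be taken as the pointwise definition of $f$ on $\R$.

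Next, since $\widehat f\in L^2([-1/2,1/2])$, I would expand it in the orthonormal Fourier basis $\{e^{-2\pi i n\xi}\}_{n\in\Z}$ on this interval:
\begin{equation*}
\widehat f(\xi)=\sum_{n\in\Z} c_n\, e^{-2\pi i n\xi},\qquad c_n=\int_{-1/2}^{1/2}\widehat f(\xi)\,e^{2\pi i n\xi}\diff\xi,
\end{equation*}
with convergence in $L^2([-1/2,1/2])$. The key observation is that, by the Fourier inversion formula from the first step applied at the integer $x=n$, we have $c_n=f(n)$.

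It then remains to substitute the Fourier series into the integral representation of $f(x)$ and justify the interchange. Since the partial sums converge in $L^2([-1/2,1/2])$ and the exponential $\xi\mapsto e^{2\pi i x\xi}$ is bounded on $[-1/2,1/2]$, the interchange follows from Cauchy--Schwarz, yielding
\begin{equation*}
f(x)=\sum_{n\in\Z} f(n)\int_{-1/2}^{1/2} e^{2\pi i(x-n)\xi}\diff\xi=\sum_{n\in\Z} f(n)\,\frac{\sin(\pi(x-n))}{\pi(x-n)}.
\end{equation*}
This is where the only mild subtlety lies: one must be careful about the mode of convergence of the final series (pointwise vs.\ $\ell^2$-based), but since $(f(n))_{n\in\Z}\in\ell^2(\Z)$ by Plancherel applied to the Fourier series expansion, and $\{\operatorname{sinc}(\cdot-n)\}_{n\in\Z}$ is an orthonormal system in the Paley--Wiener space, the series converges both in $L^2(\R)$ and pointwise, completing the proof.
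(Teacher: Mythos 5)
The paper does not prove this statement: it is quoted as a classical result (Theorem A), attributed directly to \cite{Higgins}, so there is no in-paper argument to compare against. Your proof is correct and is the standard Paley--Wiener/Fourier-series argument: you use Fourier inversion on the band $[-1/2,1/2]$ to get the entire extension, observe that the $L^2$-Fourier coefficients of $\widehat f$ on that interval are exactly the samples $f(n)$, and then interchange the sum with the integral via Cauchy--Schwarz, since $\xi\mapsto e^{2\pi i x\xi}$ lies in $L^2([-1/2,1/2])$ for each fixed $x$. The final remark correctly identifies the two modes of convergence (pointwise from the inner-product argument; $L^2(\R)$ from $(f(n))\in\ell^2$ and the orthonormality of the shifted sinc functions in the Paley--Wiener space), though the pointwise convergence was already secured by the Cauchy--Schwarz step and did not need the separate $\ell^2$/orthonormality observation.
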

In other words, $(\mathbb Z,\mathbb R\setminus [-\frac12,\frac12]
)$ is a uniqueness pair for $\mathcal S$ and an interpolation pair for a suitable Paley-Wiener space. Many similar results followed (see the introduction of \cite{KNS} for more
examples), none of which having both $A$ and $B$ discrete. It
was only recently that Radchenko and Viazovska managed to 
obtain such a pair in \cite{RV}
(see \cite{Venkatesh} for a recent cohomological proof and \cite{Cohn} for a historical account and an overview of these results), where the authors actually constructed - using the theory of modular forms, inspired by \cite{viazovska2017sphere,cohn2017sphere} - a $\mathcal S_{even}-$interpolation formula
supported on $(\sqrt \N, \sqrt \N$), i.e. they proved that for any even Schwartz function $f$ we have
\begin{equation}
f(x)=\sum_{n\ge 0} f(\sqrt n)a_n(x)+\sum_{n\ge 1} \widehat f(\sqrt n)\widehat a_n(x)\label{EQ : RV}.
\end{equation}
However, their proof relies on symmetries that are unique to the set $\sqrt{\mathbb N}$ and, as such, it breaks for
different uniqueness pairs, for which new methods are needed \footnote{This is not to say that
this technique is not susceptible to generalisation: Indeed, Bondarenko, Radchenko and Seip did so in \cite{BRS}}. These new, purely analytical methods were first introduced by the second author and M. Sousa in \cite{RS1}, 
where they investigated the range of values $\alpha,\beta$ such that 
 $(\{\pm n^\alpha\}, \{\pm n^{\beta}\})$ is a Fourier uniqueness pair. Kulikov, Nazarov and Sodin \cite{KNS} later extended their
results to the whole  \emph{supercritical} range of exponents, providing also counterexamples in the subcritical range. It is interesting to note that one of the boundary cases of the main result in \cite{KNS} is \emph{exactly} the Radchenko\hspace{1pt}-\hspace{-1pt}Viazovska pair. 

In fact, in \cite{KNS} the authors proved much more: given any two non-decreasing sequences $\{x_n\}_n, \{y_n\}_n$ wich are $(p,q)$ supercritical, i.e. such that
 \begin{align*}
&\lim_{|n|\to \infty}|x_n|,|y_n|=\infty\\ &\limsup |x_n|^{p-1}|x_{n+1}-x_n|, |y_n|^{q-1}|y_{n+1}-y_n|<\frac12,\stepcounter{equation}\tag{\theequation}\label{eq: separated}
\end{align*}
 for some exponents $p,q > 1$ with $1/p + 1/q=1,$ then the pair $(A,B)$ with $A=\{x_n\}, B=\{y_n\}$ is a Fourier uniqueness pair for $\mathcal S$ and it even is a strong $\mathcal H-$Fourier interpolation pair, where $\mathcal H$ is 
 a particular 
 Sobolev space, so we have
 \begin{equation}\label{eq : interpolation-kns-weak}
f(x)=\sum_{\lambda\in A,\mu\in B} \left( f(\lambda)a_\lambda(x)+ \widehat f(\mu)b_\mu(x)\right).
\end{equation}
 On the other hand, they prove that if the pair of sequences is subcritical, i.e.
 \begin{equation*}
\liminf |x_n|^{p-1}|x_{n+1}-x_n|, |y_n|^{q-1}|y_{n+1}-y_n|>\frac12,
\end{equation*} then there exists a nonzero Schwartz function $f$ such that $f(x_n)=\widehat f(y_n)=0$, so $(A,B)$ is not a Fourier uniqueness pair.\par
The results of Kulikov, Nazarov and Sodin do not cover the critical case, i.e. the one where equality holds in \eqref{eq: separated}, in which
the Radchenko-Viazovska formula falls. Progress on the interesting problem of describing all the possible interpolation formulas (and Fourier uniqueness pairs) that satisfy this criticality condition was first made by the second author and M. Sousa, who in \cite{RS2} investigated perturbations of the interpolation formula due to Radchenko and
Viazovska, proving the following result, which settled a question raised by H. Cohn and D. Radchenko:
\begin{theoremA}[\cite{RS2}]\label{thm : per}
Given $\{\varepsilon_n\}_{n=0}^\infty$ with $\varepsilon_0=0, |\varepsilon_n| \le C n^{-\frac 54}\log^{-3}(1+n)$, 
$A=B=\{\sqrt{n+\varepsilon_n}\}$ is a Fourier uniqueness pair provided $C$ is small enough. Moreover, there exist functions 
$\theta_n, \nu_n$ such that for all $n\in\mathbb N$ we have
\begin{equation*}
|\theta_n(x)|+|\nu_n(x)|+|\widehat{\theta_n}(x)|+|\widehat \nu_n(x)|\le (1+j)^{\mathcal O(1)}(1+|x|)^{-10}
\end{equation*}
and for any $f\in\mathcal S_\text{even}$ we have
\begin{equation*}
f(x)=\sum f(\sqrt{n+\varepsilon})\theta_n(x)+\sum \widehat f(\sqrt{n+\varepsilon_n})\nu_n(x).
\end{equation*}
\end{theoremA}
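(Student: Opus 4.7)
The plan is to realise the perturbed formula as a Neumann-series inversion of a small perturbation of the Radchenko--Viazovska identity \eqref{EQ : RV}. Define the perturbed sampling map
\begin{equation*}
S_\varepsilon f := \bigl(\,(f(\sqrt{n+\varepsilon_n}))_{n\ge 0}\,,\,(\widehat{f}(\sqrt{n+\varepsilon_n}))_{n\ge 1}\,\bigr),
\end{equation*}
and the unperturbed Radchenko--Viazovska reconstruction
\begin{equation*}
R_0(\alpha,\beta)(x) := \sum_{n\ge 0}\alpha_n\, a_n(x) + \sum_{n\ge 1}\beta_n\, \widehat{a}_n(x),
\end{equation*}
so that \eqref{EQ : RV} says exactly $R_0 S_0 f = f$ on $\mathcal{S}_{\mathrm{even}}$. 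Composing with $S_\varepsilon$ yields $R_0 S_\varepsilon f = f + K_\varepsilon f$, where
\begin{equation*}
K_\varepsilon f := \sum_{n\ge 0}\bigl[f(\sqrt{n+\varepsilon_n})-f(\sqrt{n})\bigr]a_n + \sum_{n\ge 1}\bigl[\widehat{f}(\sqrt{n+\varepsilon_n})-\widehat{f}(\sqrt{n})\bigr]\widehat{a}_n.
\end{equation*}
The strategy is to prove that $K_\varepsilon$ is a contraction on a suitable Banach space $X\subset \mathcal{S}_{\mathrm{even}}$, so that $I+K_\varepsilon$ is invertible, giving $f=(I+K_\varepsilon)^{-1}R_0 S_\varepsilon f$ and, upon reading off coordinates, the perturbed basis $\theta_n = (I+K_\varepsilon)^{-1} a_n$ and $\nu_n = (I+K_\varepsilon)^{-1}\widehat{a}_n$.

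The quantitative input is the mean value bound
\begin{equation*}
|f(\sqrt{n+\varepsilon_n})-f(\sqrt{n})| \lesssim \frac{|\varepsilon_n|}{\sqrt{n}}\,\sup_{|t-\sqrt{n}|\lesssim 1}|f'(t)|,
\end{equation*}
together with its Fourier analogue, to be combined with the known pointwise estimates $|a_n(x)|,|\widehat{a}_n(x)|\lesssim (1+n)^{\mathcal{O}(1)}(1+|x|)^{-10}$ on the Radchenko--Viazovska basis and with $L^\infty$ control of $f'$ in terms of a weighted norm of $f$. The outcome is an inequality of the schematic form
\begin{equation*}
\|K_\varepsilon f\|_X \lesssim \Bigl(\sup_n (1+n)^{\mathcal{O}(1)}\, \log^3(1+n)\, n^{-1/2}\,|\varepsilon_n|\Bigr)\|f\|_X,
\end{equation*}
and the hypothesis $|\varepsilon_n|\le C n^{-5/4}\log^{-3}(1+n)$ is precisely calibrated so that the right-hand side is strictly less than $\|f\|_X$ for $C$ small. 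The Neumann series $\sum_{k\ge 0}(-K_\varepsilon)^k$ then converges in operator norm, and the claimed uniform bounds $|\theta_n(x)|+|\nu_n(x)|+|\widehat{\theta_n}(x)|+|\widehat{\nu_n}(x)|\le (1+n)^{\mathcal{O}(1)}(1+|x|)^{-10}$ follow by applying it term by term to the known pointwise estimates on $a_n$ and $\widehat a_n$, using that Fourier decay of $a_n$ is symmetric to its spatial decay.

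The main obstacle, in my view, is the choice of the space $X$: it must be strong enough to encode both the spatial decay $(1+|x|)^{-10}$ and the $L^\infty$-control of derivatives needed to compare samples at $\sqrt{n+\varepsilon_n}$ and $\sqrt{n}$, yet weak enough that the Radchenko--Viazovska basis $\{a_n,\widehat{a}_n\}$ remains uniformly controlled up to the tolerated $(1+n)^{\mathcal{O}(1)}$ growth. The exponent $5/4$ and the logarithmic losses should emerge from this optimisation as the precise balance between (i) the $n^{-1/2}$ loss coming from the square-root change of variable $x\mapsto \sqrt x$ in the derivative estimate, and (ii) the polynomial growth in $n$ of the Radchenko--Viazovska functions together with the logarithmic losses incurred in translating $L^\infty$-derivative control into the weighted norm of $X$. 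Once this calibration is in place, uniqueness for the pair $(A,A)$ follows as a byproduct of the invertibility of $I+K_\varepsilon$, and the interpolation identity holds with the $\theta_n,\nu_n$ defined above.
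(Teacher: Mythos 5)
Your proposal matches the approach of \cite{RS2}: realise the perturbed sampling as a small operator perturbation of the Radchenko--Viazovska identity, show that the difference from the identity is a contraction in a suitable weighted function (or, equivalently, sequence) space, and invert by Neumann series. The key quantitative ingredients you identify -- the mean-value loss of $n^{-1/2}$ from differentiating $\sqrt{\,\cdot\,}$, the polynomial and $\log^3$ growth of the basis $a_n$, and the calibration so that $\sup_n (1+n)^{5/4}\log^3(1+n)|\varepsilon_n|$ is small -- are exactly those appearing in the estimate $\|I-T_\varepsilon\|_{\mathcal L}\lesssim_s \sup_n|\varepsilon_n|(1+n)^{5/4}\log^3(e+n)$ that underlies the theorem. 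Two points you flag but do not settle are in fact where the work lies: (i) the choice of $X$, which in \cite{RS2} is a fixed-order Sobolev-type space $\mathcal H^s$ (hence the $s$-dependence of the smallness constant, and hence why the result lands at Sobolev regularity rather than in $\mathcal S$, as the present paper later upgrades); and (ii) the passage from the Neumann inversion to the stated pointwise bounds on $\theta_n,\nu_n$, which requires $X$ to embed into the weighted $L^\infty$ class with decay $(1+|x|)^{-10}$ and the interchange of $(I+K_\varepsilon)^{-1}$ with the infinite sum. These are details rather than gaps, and the structure of your argument is the intended one.
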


\subsection{Main results} In this manuscript, we investigate the regularity required for the validity of the Fourier interpolation formulas. Specifically, if an interpolation formula such as \eqref{eq : interpolation}
holds, when is it a strong Fourier interpolation pair? What can be said
about the regularity of the interpolating functions?

 A case of particular interest is that of Theorem \ref{thm : per}:
as it turns out, we are able to improve the regularity of the basis functions
and the strength of convergence in that result all the way up to the Schwartz class in the \emph{subcritical} case of that theorem: 
\begin{theorem}\label{thm : pertSch}
Let $\{\varepsilon_n\}_{n\in\N}$ be a sequence such that $|\varepsilon_n|< c(1+n)^{-\frac54-\delta}$, for some $\delta>0$.
Then, if $c>0$ is small enough, there exists a family of Schwartz functions $\{h_n\}\cup \{g_n\}$ such that for any $f\in \mathcal S_{\text{even}}$
we have
\begin{equation*}
f(x)=\sum f(\sqrt{n+\varepsilon_n})h_n(x)+\widehat f(\sqrt{n+\varepsilon_n})g_n(x).
\end{equation*}
Moreover, the series converges absolutely in the Schwartz topology and such a decomposition is unique. 
\end{theorem}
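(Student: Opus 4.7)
My plan is a perturbation argument centered on the Radchenko--Viazovska Schauder basis $\{a_n,\widehat{a}_n\}_n$ of $\mathcal S_{\text{even}}$ coming from \eqref{EQ : RV}. The key idea is to introduce a linear operator on $\mathcal S_{\text{even}}$ that encodes the passage from the unperturbed sampling functionals $f \mapsto f(\sqrt n)$, $f \mapsto \widehat f(\sqrt n)$ to the perturbed ones, prove that this operator differs from the identity by an operator whose norm in each Schwartz seminorm is controlled by the smallness parameter $c$, and then invert it via a Neumann series to produce the interpolating basis $\{h_n,g_n\}$.

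\textbf{The operator and its inversion.} Define $T : \mathcal S_{\text{even}} \to \mathcal S_{\text{even}}$ by
$$
T(f) := \sum_n f(\sqrt{n+\varepsilon_n})\, a_n + \sum_n \widehat f(\sqrt{n+\varepsilon_n})\, \widehat a_n.
$$
For $\varepsilon \equiv 0$ the Radchenko--Viazovska identity gives $T = I$. In general, a Taylor expansion
$$
f(\sqrt{n+\varepsilon_n}) - f(\sqrt n) \;=\; \tfrac{\varepsilon_n}{2\sqrt n}\, f'(\sqrt n) + O\!\Big(\tfrac{\varepsilon_n^2}{n^{3/2}}\Big)\|f\|_{C^2}
$$
(and analogously for $\widehat f$), combined with the polynomial-in-$n$ Schwartz-seminorm bounds for $a_n,\widehat a_n$ available from \cite{RV, RS1, RS2}, shows that each Schwartz seminorm of $(T-I)f$ is bounded by $c$ times a (higher-order) Schwartz seminorm of $f$. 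The crucial feature is the \emph{subcritical} assumption $|\varepsilon_n| \le c(1+n)^{-5/4-\delta}$: the extra decay $n^{-\delta}$ past the critical $n^{-5/4}$ provides exactly the room required to absorb all the polynomial weights appearing in the seminorms of $a_n,\widehat a_n$. For $c$ small enough, the Neumann series $T^{-1} = \sum_{k\ge 0} (I-T)^k$ converges seminorm-by-seminorm on the Fr\'echet space $\mathcal S_{\text{even}}$, yielding a continuous inverse.

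\textbf{Extraction of the basis and closing the loop.} Set $h_n := T^{-1}(a_n)$ and $g_n := T^{-1}(\widehat a_n)$. Both are Schwartz since $T^{-1}$ preserves $\mathcal S_{\text{even}}$. From $T(h_n) = a_n$, $T(g_n) = \widehat a_n$ and the uniqueness of the Radchenko--Viazovska expansion one obtains the biorthogonality relations
$$
h_n(\sqrt{m+\varepsilon_m}) = \delta_{mn}, \quad \widehat{h_n}(\sqrt{m+\varepsilon_m}) = 0, \quad g_n(\sqrt{m+\varepsilon_m}) = 0, \quad \widehat{g_n}(\sqrt{m+\varepsilon_m}) = \delta_{mn}.
$$
For any $f \in \mathcal S_{\text{even}}$, set $u := \sum_n f(\sqrt{n+\varepsilon_n}) h_n + \sum_n \widehat f(\sqrt{n+\varepsilon_n}) g_n$: the coefficients decay faster than any polynomial in $n$, while the Schwartz seminorms of $h_n, g_n$ grow at most polynomially by continuity of $T^{-1}$, so the series converges absolutely in the Schwartz topology. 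Applying the continuous operator $T$ term-by-term gives $T(u) = \sum f(\sqrt{n+\varepsilon_n}) a_n + \sum \widehat f(\sqrt{n+\varepsilon_n}) \widehat a_n = T(f)$, and injectivity of $T$ forces $u = f$. Uniqueness of the decomposition is an immediate consequence of the biorthogonality.

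\textbf{Main obstacle.} The delicate point is the quantitative control of $T - I$ simultaneously in every Schwartz seminorm. This requires sharp pointwise and derivative estimates for $a_n, \widehat a_n$ on the Radchenko--Viazovska side, together with careful bookkeeping of how these weights interact with the Taylor tails of $f,\widehat f$ at the perturbed nodes. The hypothesis $|\varepsilon_n| \le c(1+n)^{-5/4-\delta}$ is precisely calibrated so that the extra $n^{-\delta}$ buys the summability needed to push the estimate through the entire Schwartz scale of weighted derivative norms; removing the $\delta$ returns us to the merely critical regime of Theorem \ref{thm : per}, where only polynomial-decay bounds on the basis functions could previously be established.
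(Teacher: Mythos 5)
Your overall setup (perturbation operator, Neumann inversion, extraction of biorthogonal basis) is the natural first attempt, and it is precisely what the paper discusses in Proposition~\ref{prop : weakpert}; but for perturbations decaying only polynomially it does \emph{not} close, and the paper is forced to a genuinely different argument. The problem is in the step where you claim that ``each Schwartz seminorm of $(T-I)f$ is bounded by $c$ times a (higher-order) Schwartz seminorm of $f$'' and that the extra $n^{-\delta}$ ``buys the summability needed to push the estimate through the entire Schwartz scale.'' Run the estimate: using $|f(\sqrt{n+\varepsilon_n})-f(\sqrt n)|\lesssim \tfrac{|\varepsilon_n|}{\sqrt n}\|f'\|_\infty$ and Corollary~\ref{cor : seminorms}, $p_{\alpha,\beta}(a_n)\lesssim n^{(\alpha+\beta)/2+3/4}\log^3(1+n)$, you get
\begin{equation*}
p_{\alpha,\beta}\bigl((T-I)f\bigr)\lesssim \|f'\|_\infty\,\sum_n \frac{|\varepsilon_n|}{\sqrt n}\,n^{\frac{\alpha+\beta}{2}+\frac34}\log^3(1+n)
\lesssim c\,\|f'\|_\infty \sum_n n^{-1-\delta+\frac{\alpha+\beta}{2}}\log^3(1+n),
\end{equation*}
and this series \emph{diverges} as soon as $\alpha+\beta>2\delta$. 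Thus the perturbation operator does not map a bounded (in a fixed $p_0$) set of Schwartz functions into a bounded set; there is no single dominating seminorm, and the generalized Neumann trick (which requires exactly that: $p(Tx)\le C_p\,p_0(x)$ for \emph{all} $p$ with one fixed $p_0$, plus $C_{p_0}<1$) does not apply. If instead you iterate the bound with a seminorm index that keeps climbing, the geometric series $\sum (I-T)^k$ never stabilizes in the Fr\'echet topology. The naive argument only works when $\varepsilon_n$ decays faster than every polynomial, which is strictly weaker than the claimed $n^{-5/4-\delta}$ regime.

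The paper bridges this gap with two ideas you do not have. First, it moves to the sequence side and works in the weighted Hilbert scales $\ell^2_s$, where it can estimate the \emph{Hilbert--Schmidt} norm of $I-T$; this gives compactness of $I-T$ at every level $s$, hence that $T$ is Fredholm of index $0$ at every level — and this only needs $\|I-T\|_{\mathrm{HS}}$ \emph{finite}, not small. Second, smallness of $c$ is used to force injectivity at one fixed level $s_0$ (here $c$ may depend on $s_0$, which is harmless). The sc-Fredholm machinery (Proposition~\ref{thm : Fredholm}, which rests on the regularizing property of $T$ coming from \cite{RS2, KNS}) then upgrades ``Fredholm index $0$ and injective at one level'' to ``isomorphism on the projective limit $\mathfrak s$,'' which is what yields the Schwartz-convergent basis. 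You would also need the paper's Step~2 (perturbing the origin node via Corollary~\ref{cor : pertframe}) to handle $\varepsilon_0\neq 0$, which your proposal does not address.
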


This result improves drastically the ones in \cite{RS2}. Indeed, in \cite{RS2}, the authors only manage to prove that the perturbed interpolation functions belong to certain fixed-order Sobolev spaces, whereas Theorem \ref{thm : pertSch} shows that, as long as one has slightly more decay than in Theorem \ref{thm : per}, then the interpolating functions are automatically as regular as one wishes. 

This confirms the regular behaviour already displayed by the original basis functions \cite{RV}, which is perhaps surprising at first glance, due to the seemingly irregular nature of the perturbations at hand. 

 The same question can be asked of the supercritical case. In that regard, we note that in \cite{KNS}, in spite of constructing interpolation formulas such as \eqref{eq : interpolation-kns-weak}, the authors remark that their method
does \emph{not} imply the existence of interpolation formulas
converging in $\mathcal S$ in the supercritical case, and they hence leave as an open question the task of transforming their interpolation results into truly \emph{supercritical Schwartz-converging} formulas. 

In the next result, we are able to answer their question in the \emph{affirmative},  at least for sufficiently dense sequences - i.e.  in a certain subset of the supercritical range. 

\begin{theorem}\label{thm : KNSsoleven} Let $\{x_n\}_{n \in \Z}$ and $\{y_n\}_{n \in \Z}$ denote two even sequences of real numbers satisfying 
\begin{equation}\label{eq : bound-x-y-stronger} 
\max\left( \limsup_{n \to \infty} |x_{n+1}|^D |x_{n+1} - x_n|, \limsup_{n \to \infty} |y_{n+1}|^D |y_{n+1} - y_n| \right) \le c. 
\end{equation}
Then, if $D > \frac{7}{2}$ and $c>0$ is sufficiently small, there exist functions $h_n,g_n \in \mathcal S$ such that for all functions in $f\in\mathcal S$,
\begin{equation*}
f(x)=\sum f(x_n) h_n(x)+\sum \widehat f(y_n)g_n(x),
\end{equation*}
and the sum converges in $\mathcal S$.
\end{theorem}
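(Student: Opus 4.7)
The strategy is to leverage the $\mathcal H$-interpolation formula already guaranteed by \cite{KNS} in the supercritical regime and then \emph{upgrade} the convergence and the regularity of the basis functions all the way to the Schwartz class, in the spirit of Theorem \ref{thm : pertSch}, using the sc-Fredholm and Schauder-frame toolbox mentioned in the abstract. Since the sequences are even, I would first reduce to the even and odd sectors of $\mathcal S$ separately, which lets us treat $f$ and $\widehat f$ on the same footing. I would then set up the analysis operator $T\colon\mathcal S\to X$ by $Tf=\bigl((f(x_n))_n,(\widehat f(y_n))_n\bigr)$, where $X$ is a suitable scale of weighted sequence spaces. The density condition \eqref{eq : bound-x-y-stronger} forces $|x_n|,|y_n|\sim n^{1/(D+1)}$, so for every $N\ge 0$ one has $|f(x_n)|+|\widehat f(y_n)|\lesssim_{N,f}(1+n)^{-N}$ and $T$ maps continuously into every such weighted $\ell^p$. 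The theorem of Kulikov--Nazarov--Sodin provides a bounded left inverse $R_{\mathcal H}\colon X\to\mathcal H$ realised through basis functions $a_n,b_n\in\mathcal H$ satisfying the usual biorthogonality relations with respect to the samples.

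The core step is to prove that these basis functions are actually Schwartz and come with quantitative seminorm bounds. The key lemma I would establish reads: any $\varphi\in\mathcal H$ with $\varphi(x_n)=\widehat\varphi(y_n)=0$ for all but finitely many $n$ is automatically in $\mathcal S$, with every Schwartz seminorm controlled polynomially in the index of the last surviving node. This should be obtained by a reproducing-kernel-type bootstrap, iteratively moving $\varphi$ up the Sobolev scale: the density of $\{x_n\}$ and $\{y_n\}$, being of order $n^{1/(D+1)}$, outruns the derivative weight of $H^s$ for every $s$, provided $D>7/2$. Applying this to $\varphi=a_n,b_n$, which by construction vanish at all but a single node of the relevant sequence, yields $a_n,b_n\in\mathcal S$ together with bounds $\|a_n\|_{k,N}+\|b_n\|_{k,N}\lesssim (1+|x_n|)^{A(k,N)}$.

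Once the basis functions are Schwartz with such quantitative control, the convergence in $\mathcal S$ of the interpolation series is essentially a Schauder-frame computation: for $f\in\mathcal S$, every $(k,N)$ and every $M$,
\begin{equation*}
\sum_n |f(x_n)|\,\|a_n\|_{k,N}+\sum_n |\widehat f(y_n)|\,\|b_n\|_{k,N}\lesssim \sum_n (1+|x_n|)^{-M+A(k,N)},
\end{equation*}
which, via $|x_n|\sim n^{1/(D+1)}$, is summable as soon as $M$ is chosen larger than $A(k,N)+D+2$. The precise threshold $D>7/2$ appears here as the exponent that makes this compatibility uniform across all Schwartz seminorms, after the losses incurred at each bootstrap step are accumulated.

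The main difficulty will be the key lemma of the second paragraph: establishing simultaneous control in physical and frequency space requires a careful induction, because each bootstrap step on the physical side introduces Fourier-side conditions to verify and vice-versa, and the argument is a priori circular. I expect to handle this by treating the two sides symmetrically through a pair of coupled sc-Fredholm operators, using the smallness of $c$ in \eqref{eq : bound-x-y-stronger} to keep the compact-perturbation step invertible at every level of the Sobolev scale. Uniqueness of the decomposition is then immediate from the injectivity of $T$ on $\mathcal S$, which is already part of \cite{KNS}.
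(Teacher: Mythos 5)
Your proposal has a genuine gap in its central step. You claim the KNS interpolation formula comes with basis functions $a_n,b_n$ ``satisfying the usual biorthogonality relations with respect to the samples,'' and then apply your key lemma to them, asserting that each $a_n,b_n$ ``by construction vanish[es] at all but a single node.'' This is false: the two-sided inequality in \cite[Theorem~2]{KNS} makes the analysis operator an \emph{embedding}, not a surjection onto the relevant weighted $\ell^2$ space, so the induced reconstruction is a \emph{frame} expansion, not a basis expansion, and the dual frame elements need not vanish at the remaining sampling nodes. Without biorthogonality your key lemma simply does not apply to $a_n,b_n$, and the quantitative seminorm bounds $\|a_n\|_{k,N}\lesssim(1+|x_n|)^{A(k,N)}$ that drive your summability estimate have no source. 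Moreover, even if such bounds held, absolute summability of the formal series in every Schwartz seminorm does not by itself yield the identity $f=\sum f(x_n)h_n+\sum\widehat f(y_n)g_n$: the real obstruction is whether the $\mathfrak s$-frame $(\delta_{x_n},\delta_{y_n}\circ\mathcal F)$ is a \emph{strong} frame, which is equivalent to the range of the analysis operator being \emph{complemented} in $\mathfrak s\oplus\mathfrak s$ (Corollary~\ref{cor : complstr}). Your proof never addresses complementedness at all.

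The paper's argument is structurally different and, I think, illuminating to compare against yours. Rather than bootstrapping the KNS basis functions up the Sobolev scale, the paper selects from $\{x_n\}$ (resp.\ $\{y_n\}$) a \emph{subsequence} $x_{m_1(n)}=\sqrt{n+\varepsilon_n(m_1)}$ that aligns with the Radchenko--Viazovska nodes, with $|\varepsilon_n(m_1)|\le Cn^{(1-D)/2}$. The threshold $D>\tfrac72$ has nothing to do with accumulated bootstrap losses; it is precisely what makes this perturbation fall under the hypotheses of Theorem~\ref{thm : pertSch} (which needs decay faster than $n^{-5/4}$, i.e.\ $\tfrac{D-1}{2}>\tfrac54$). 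That theorem furnishes a perturbed Schauder \emph{basis} for $\mathcal S_{\text{even}}$, hence a strong $\mathfrak s$-\emph{sub}frame; Corollary~\ref{cor : subframe} then upgrades the whole sequence to a strong frame, and Corollary~\ref{cor : complstr} gives the convergent interpolation formula. The Fredholm machinery is used to verify that the relevant map $\mathcal T_\alpha$ is sc-Fredholm and injective, which is concrete and checkable, in contrast to your proposed ``pair of coupled sc-Fredholm operators'' whose construction you do not spell out.

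If you want to repair your argument in its own spirit, you would need to (i) replace the biorthogonality claim by an argument that the canonical dual frame elements of the KNS analysis operator are Schwartz with explicit polynomial seminorm growth, and (ii) prove that the range of the analysis operator in $\mathfrak s\oplus\mathfrak s$ is complemented. Neither is straightforward, and (ii) in particular is exactly the obstacle the paper sidesteps by passing to a Radchenko--Viazovska-aligned subsequence.
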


We note that this result and its proof provide an \emph{automatic} improvement argument, showing how better estimates on the Radchenko-Viazovska basis functions imply a positive answer in an even larger number of cases. That is, if one improves the dependency on $n$ in the upper bound on the functions $\{a_n\}_{n \in \N}$ in Theorem \ref{thm : RV} (see, for instance, Theorem \ref{thm : RS2est} below), then Theorem \ref{thm : KNSsoleven} is \emph{automatically} improved. 

One of our main tools to prove Theorem \ref{thm : KNSsoleven} - as well as Theorem \ref{thm : pertSch} - is the framework of \emph{sc-Fredholm operators}. These are operators defined on a `scale' of Hilbert spaces, which are, roughly speaking, Fredholm at each level, and \emph{regularising} - in the sense that, if the operator takes an element from a (possibly) lower Hilbert space into a higher one, then the original element has to lie `higher' in the scale, and hence have more `regularity'. 

As it turns out, the scales of Hilbert spaces both in \cite{RS2} and in \cite{KNS} lie \emph{exactly} in the realm of regularising operators in our case. That is, if we define, in loose terms, the operator $T$ that maps a Schwartz function (resp. a function belongin to a certain high order Sobolev space) into a sequence $(f(x_k),\widehat{f}(y_k))_k$, for a fixed pair of sequences $(\{x_k\}_k,\{y_k\}_k)$ satisfying a suitable \emph{supercritical condition} \`a la \cite{KNS}, then we may show by the main results in that manuscript that $T$ is regularising. 

Moreover, if one is given a bit more of structure - such as that provided by the perturbative framework of the proofs in \cite{RS2} -, then one is able to show that the operators at hand are also \emph{Fredholm} at each level. Using these two crucial points, we then conclude that the subspace generated by the action of the evaluation operators considered is, on the one hand as a consequence of the results from \cite{KNS}, \emph{closed}, and, on the other hand by the fact that the operator itself is Fredholm in the perturbative regime, that that subspace $T(\mathcal{S})$ is \emph{complemented} within a suitable space of sequences. 

As a last ingredient in the proof, we are then naturally led to invoke the theory of \emph{Schauder frames}. This theory applied to our case predicts, in a nutshell, that the convergence of the various interpolation formulas of Theorem \ref{thm : pertSch}, \ref{thm : KNSsoleven} in the Schwartz space is equivalent to the existence of a \emph{strong Schauder frame} which itself turns out to be \emph{equivalent} to the complementedness of the subspaces generated by our operator, which falls exactly within the particular collection of sequences which we are able to simultaneously treat with tools stemming from \cite{KNS} and \cite{RS2}. 

%The same argument as in \cite{RS2}, after corollary 5.8 allows us to extend the previous result to $x_n=c n^\alpha$ and
%$y_n=cn^\beta$ provided that both $\alpha$ and $\beta$ are $<\frac 29$ and $c\ll 1$. In fact, it suffices to find $m_1$ and $m_2$
%in place of $m(n)$ in the previous proof and then argue in the same way with the adjustments mentioned in \ref{rmk : doublepert}.

\vspace{2mm}

Finally, as a last contribution of this manuscript, we deal with the case of even \emph{more regular} interpolation formulas. Indeed, one may prove that both $a_n$ and $\widehat a_n$ may be extended as \emph{entire and of order $2$} on $\C$, so it is a natural question whether there exists a decay rate fast enough of perturbations which implies the analyticity of the respective perturbed interpolation functions. 

Contrarily to the previous results in this manuscript, it seems that such a result \emph{cannot} be proved with purely functional-analytic techniques, such as the other ones used to prove Theorems \ref{thm : pertSch} and \ref{thm : KNSsoleven}, as there is no natural space of analytic functions in which the $a_n$ functions are a basis - or even an ``almost-basis'', for that matter. This is due to the fact that the decay results obtained in \cite{RS2} (see Theorem \ref{thm : RS2est}) are, as a matter of fact, \emph{best possible} - see Proposition \ref{prop : bound-decay-b-n} below for a proof of this fact.  

It is then most surprising that by assuming nothing more than exponential decay of perturbations, we are able to retain this property of the original Radchenko--Viazovska basis functions. This is the content of the next result: 

\begin{theorem}\label{thm : entire}
Given $\{\varepsilon_n\}_n$ a sequence such that  $|\varepsilon_n| \leq ce^{-Cn},$ with $c>0$ sufficiently small and $C>0$ sufficiently large, we have that the perturbed interpolating functions $\{h_n\} \cup\{g_n\}$ from Theorem \ref{thm : pertSch} are \emph{entire of order $2$}. Moreover, they can be seen to be of \emph{finite type equal to $\pi,$} where we define the type of an entire function $G$ of order $2$ to be 
\[
\sigma = \limsup_{r \to \infty} \frac{\log(\sup_{z \in B_r(0)} |G(z)|)}{r^2}.
\]
\end{theorem}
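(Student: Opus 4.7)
The plan is to express the perturbed interpolating functions $h_n, g_n$ as a Neumann series acting on the original Radchenko--Viazovska basis, and to show that this series converges in a Banach space $\mathcal{A}_\pi$ of entire functions of order $2$ and type $\pi$, suitably compatible with the Fourier transform. Once this is established, the resulting functions are automatically entire of order $2$ and type at most $\pi$, and the type equals $\pi$ because $a_n$ already saturates this bound.

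First I would fix the space
\[
\mathcal{A}_\pi := \bigl\{F \in \mathcal{S}(\R) : F \text{ extends entirely with } \|F\|_\pi := \sup_{z \in \C} |F(z)| e^{-\pi|z|^2} < \infty \text{ and } \widehat{F} \in \mathcal{A}_\pi\bigr\},
\]
endowed with the norm $\|F\|_\pi + \|\widehat{F}\|_\pi$. From the explicit modular-form integral representations in \cite{RV}, each $a_m$ and $\widehat{a_m}$ lies in $\mathcal{A}_\pi$ with norm bounded by some polynomial $P(m)$. Denoting by $T(f) := (f(\sqrt{m + \varepsilon_m}))_m \oplus (\widehat{f}(\sqrt{m + \varepsilon_m}))_m$ the perturbed evaluation operator and by $T_0$ its unperturbed version at $\sqrt{m}$, the formula \eqref{EQ : RV} yields $T_0^{-1} : (s_m) \oplus (t_m) \mapsto \sum_m s_m a_m + \sum_m t_m \widehat{a_m}$, bounded from the weighted sequence space $\{(s_m) \oplus (t_m) : \sum_m (|s_m| + |t_m|) P(m) < \infty\}$ into $\mathcal{A}_\pi$. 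The characterising property of the perturbed basis in Theorem \ref{thm : pertSch} then formally yields
\[
h_n \;=\; (I + T_0^{-1}(T - T_0))^{-1} a_n,
\]
so it suffices to show the Neumann series $\sum_{k \ge 0} (-T_0^{-1}(T - T_0))^k a_n$ converges in $\mathcal{A}_\pi$.

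The key estimate is that $T_0^{-1}(T - T_0)$ is a contraction on $\mathcal{A}_\pi$ when $c$ is small. For $f \in \mathcal{A}_\pi$, Cauchy's formula applied on a disc of radius $1$ around $\sqrt{m}$ gives
\[
|f(\sqrt{m + \varepsilon_m}) - f(\sqrt{m})| \;\lesssim\; \frac{|\varepsilon_m|}{\sqrt{m}} \sup_{|z - \sqrt{m}| \le 1} |f(z)| \;\lesssim\; |\varepsilon_m| \, e^{\pi m + O(\sqrt{m})} \|f\|_{\mathcal{A}_\pi},
\]
and the analogue holds on the Fourier side. With $|\varepsilon_m| \le c e^{-Cm}$ for $C$ sufficiently large -- concretely, $C$ exceeding $\pi$ plus some margin depending on the degree of $P$ -- the image $(T - T_0)f$ lies in the weighted sequence space above with norm $\lesssim c \|f\|_{\mathcal{A}_\pi}$. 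Therefore $\|T_0^{-1}(T - T_0)\|_{\mathcal{A}_\pi \to \mathcal{A}_\pi} \lesssim c$, which is strictly less than $1$ for $c > 0$ small. The Neumann series then converges in $\mathcal{A}_\pi$, proving $h_n \in \mathcal{A}_\pi$; the same argument, interchanging the two components of $T$, applies to $g_n$.

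The main obstacle is the sharpness of the Cauchy bound $|f'(\sqrt{m})| \lesssim e^{\pi m}$, which is essentially optimal for general $f \in \mathcal{A}_\pi$ since elements of $\mathcal{A}_\pi$ can saturate the growth $e^{\pi|z|^2}$ in all complex directions. This forces the decay rate $C$ of the perturbations to be genuinely large, and explains both the exponential hypothesis and why no slower decay suffices: under polynomial or stretched-exponential decay the required Neumann step $\sum_m |\varepsilon_m| e^{\pi m} P(m)$ simply diverges, consistent with the sharpness claim flagged by the authors. A secondary subtlety is verifying that $\mathcal{A}_\pi$ under this norm is genuinely Fourier-invariant and that $T_0^{-1}$ lands in it with polynomial control -- both reduce to uniform $\mathcal{A}_\pi$-estimates on the $\{a_m\}$ that are implicit in the Radchenko--Viazovska modular-form construction but which deserve explicit verification.
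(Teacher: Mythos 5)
Your overall architecture -- a Neumann-series/contraction argument that trades the exponential decay of the perturbations against exponential growth in $m$ of the basis functions on compact subsets of $\C$ -- is the same mechanism the paper uses (the paper works in the exponentially weighted sequence space $\ell^2(e^{cn})\oplus\ell^2(e^{cn})$ and deduces $|\gamma_{n,k}|\lesssim e^{cn-ck}$, whereas you work directly in a Banach space $\mathcal{A}_\pi$ of entire functions; these are essentially dual formulations). However, there is one genuine and consequential error in your argument.

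You assert that each $a_m, \widehat{a_m}$ lies in $\mathcal{A}_\pi$ with norm bounded by a \emph{polynomial} $P(m)$, and later calibrate the required decay rate $C$ as ``$\pi$ plus some margin depending on the degree of $P$.'' This is false, and the paper explicitly disproves it. Proposition \ref{prop : growth-entire-basis} establishes $\|b_n^{\pm}\|_{L^{\infty}(K)}\le C^n \sup_{z\in K} e^{\pi|z|^2}$ with an \emph{exponential} constant $C^n$, and the remark immediately following (``Sharpness of Proposition \ref{prop : growth-entire-basis}'') gives a one-paragraph proof that no polynomial bound is possible: if $\|a_m\|_{\mathcal{A}_\pi}\lesssim m^A$, then for any compactly supported Schwartz $f$ the series $\sum_m f(\sqrt m)a_m(z)+\widehat f(\sqrt m)\widehat{a_m}(z)$ would converge uniformly on compacts of $\C$ to an entire function agreeing with $f$ on $\R$, forcing $f\equiv 0$. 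The exponential bound $C^m$ (which is the correct and sharp statement) is the technical heart of the theorem and requires the modular-form generating-function identities \eqref{eq : generating-g_n} and the analysis around \eqref{eq : upper-g-n-plus}; your proof simply asserts a stronger and wrong version of it with a hand-wave to ``explicit modular-form integral representations.''

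The mechanism is salvageable: replace $P(m)$ by $C_0^m$ everywhere, and the Neumann step $\sum_m |\varepsilon_m| e^{\pi m} C_0^m<\infty$ still converges provided $C> \pi + \log C_0$, which is covered by the hypothesis ``$C$ sufficiently large.'' But as written your proof both skips the one genuinely nontrivial estimate (the exponential growth bound, which is the content of Proposition \ref{prop : growth-entire-basis}) and mis-locates the source of the exponential-decay hypothesis: it is not only the Cauchy estimate $|f'(\sqrt m)|\lesssim e^{\pi m}$ on $\mathcal{A}_\pi$ that forces $C$ to be large, but also the unavoidable $C_0^m$ growth of $\|a_m\|_{\mathcal{A}_\pi}$, which your polynomial claim hides entirely.
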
 

This result shows that the analyticity property of the interpolation functions is, in a sense, \emph{robust}. In particular, it implies the surprising fact that, if $f \in \mathcal{S}_{even}$ satisfies 
\[
f(\sqrt{n+\varepsilon_n}) = \widehat{f}(\sqrt{n+\varepsilon_n}) = 0, \, \forall n\in\N \text{ such that } |n| \ge N_0,
\]
then $f$ is automatically analytic of order 2 and type $2\pi$. This may be seen as an \emph{extension} of the main principles in the first part of the proof of the main results of \cite{RS1,KNS}, which states that sufficiently dense zeros in both space and frequency in a \emph{supercritical} sense imply analyticity. 

In order to prove Theorem \ref{thm : entire}, we need different methods from the ones in the other main results of the manuscript. Indeed, Theorem \ref{thm : entire} is obtained by an application of the framework of \emph{modular form estimates} for the basis functions, as carried out in \cite{RS2,RV}.  By employing the exact formulas for the generating functions of the modular forms giving rise to the interpolation functions $\{a_n\}_{n \in \N}$, we are able to show that these functions are not only entire of order 2, but that the growth of their ``norm'' as analytic functions, in a suitable sense, is bounded by an exponential in $n$. 

Leveraging this fact, together with another application of perturbative techniques from functional analysis, allows us to show that any sufficiently small exponential perturbation of the basis functions remains analytic of the same order, completing the proof of Theorem \ref{thm : entire}. 

\vspace{2mm}

This manuscript is organized as follows. In Section \ref{sec : prelim}, we discuss preliminary facts which shall be important for the proof of our main results. We divide the proof of the main results in three parts: Theorem \ref{thm : pertSch} is dealt with in Section \ref{sec : schw-g-s}, Theorem \ref{thm : KNSsoleven} is proved in Section \ref{sec : schw-int}, while Theorem \ref{thm : entire} is discussed in Section \ref{sec : analytic}. Finally, we comment on possible extensions of our methods to other cases, as well as obstructions to generalising the current methods to other settings, in Section \ref{sec : remarks}. 

\vspace{2mm}

\subsection*{Acknowledgements} We would like to thank A. Figalli for several comments, questions and remarks that helped us improve the exposition of this manuscript. We would also like to thank J. Bonet for discussions related to the results on Schauder frames needed for this work, as well as D. Radchenko, for discussing the optimal decay rates of interpolation functions in \cite{RV}, which gave rise to Proposition \ref{prop : bound-decay-b-n}. 

\section{Preliminaries}\label{sec : prelim}
%\subsection{Modular Forms}

%\subsection{Functional Analysis}

\subsection{Schauder frames and bases} We start our discussion on preliminaries with some background material on Schauder bases and Schauder frames. In addition to the many references already contained in the text below, we also point the reader to other further recent developments in that theory, such as \cite{freeman2021schauder, eisner2020continuous, beanland2012upper}.

\begin{definition}[Bases]
Let $X$ be a locally convex space, $\{e_n\}\subset X$ a linearly independent subset of $X$ and $c_n$ a sequence of linear (but not necessarily continuous) functionals
 $c_n:X\to \mathbb K$, where $\mathbb{K}$ denotes either $\R$ or $\C$. We say $X$ is a topological basis if for every $x\in X$ 
\begin{equation}
x=\sum c_n(x) e_n, \label{eq : sd}
\end{equation}
where the right hand side converges in the topology of $X$ and the decomposition is unique.\\

We say that $\{e_n\}$ is a \emph{Schauder basis} if each of the linear functionals $c_n$ is continuous. A Schauder basis is said to be \emph{absolute} if
the sum in \eqref{eq : sd} converges absolutely, and it is \emph{strong} if for any bounded set $B$ and any continuous seminorm $p$ we have
\begin{equation*}
\sum p_B(c_n)p(x_n)<\infty,
\end{equation*}
where $p_B(f):=\sup_{x\in \text{co}(B)}|f(x)|$ .
\end{definition}

Directly related to this concept is that of a \emph{Schauder frame}: as we will see, not all the interpolation formulas induced by Fourier uniqueness pair are associated to bases. The reason for this is that we may incur in ``oversampling'', in 
some sense. This leads to the necessity of developing a slightly more flexible notion, that of frame (see \cite{Grochenig}, chapter 5 for background on the theory of frames for 
Hilbert spaces), for locally convex spaces; this theory is fairly recent, 
having been introduced in the papers \cite{Bonet1, Bonet2, FreFra}. We refer the reader also to the references therein. 

In the remainder of this section, we mostly follow \cite{Bonet2}.

\begin{definition}[Schauder frame]
Let $\{x_n\}$ be a countable collection of vectors in a locally convex space $X$ and $\{y_n\}$ a countable collection of linear functionals in $X'$. We say $\left((x_n,y_n)\right)_n$ is a Schauder frame if, for all $x\in X$ we have
\begin{equation*}
\sum y_n(x)x_n=x,
\end{equation*}
where the left hand side converges in the topology of $X$.
\end{definition}

It is interesting to note that Schauder frames provide a full characterisation of spaces with the approximation property: it is well known that
spaces admitting a Schauder basis have the bounded approximation property and that the opposite is not true; however, having the
bounded approximation property is equivalent to admitting a Schauder frame (see \cite{Bonet1}, corollary 1.5). This in particular implies
the existence of nuclear Fréchet spaces that admit a Schauder frame but not a Schauder basis.

We next explore the related definition of frames with respect to a fixed sequence space. 

\begin{definition}[$\Lambda$ frame]
Let $X$ be a locally convex space and let $\{y_n\}$ be a countable collection of elements of $X'$. Given a sequence space $\Lambda$, we say that $(y_n)$ is a $\Lambda-$frame if the map $T$ defined as $x\mapsto (y_n(x))_n$ is an embedding of $X$ in $\Lambda$. We say $(y_n)$ is a strong $\Lambda-$frame for $X$ if $T(X)$ is complemented in $\Lambda$.
\end{definition}
\begin{proposition}\label{prop : strfr}
Assume $\Lambda$ is a Fréchet
sequence space such that the canonical sequence $(e_n)$ is a Schauder basis and $\{y_n\}_n$ is a $\Lambda-$frame for $X$ and assume $X$ to be complete. Then there exists a sequence $\{x_n\}_n$ such that
$(x_n,y_n)$ is a Schauder frame and the sequence $\sum c_n x_n$ converges for all $x\in \Lambda$ if and only if $\{y_n\}$ is a strong $\Lambda-$frame.
\end{proposition}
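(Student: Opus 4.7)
The plan is to identify in both directions the continuous \emph{synthesis operator} $S\colon \Lambda \to X$ defined by $S((c_n)_n) := \sum_n c_n x_n$, and then to compose it with the analysis operator $T\colon X \to \Lambda$, $T(x)=(y_n(x))_n$, coming from the $\Lambda$-frame hypothesis. The two key observations are that the Schauder frame identity $x = \sum_n y_n(x) x_n$ is equivalent to $S \circ T = \mathrm{Id}_X$, and that, since $(e_n)$ is a Schauder basis of $\Lambda$, one has $T(x) = \sum_n y_n(x)\, e_n$, allowing one to commute continuous operators past the series defining $T(x)$ and the series defining $S$.

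For the forward direction, assume the existence of $\{x_n\}$ as in the statement. The partial-sum maps $S_N((c_n)) := \sum_{n\le N} c_n x_n$ are continuous, since each coordinate functional $(c_n) \mapsto c_k$ on $\Lambda$ is continuous (this is the Schauder basis assumption on $\Lambda$), and they converge pointwise on $\Lambda$ to $S$ by hypothesis. The Banach--Steinhaus theorem for Fréchet spaces then yields the continuity of $S$. The frame identity gives $S \circ T = \mathrm{Id}_X$, whence $P := T \circ S$ is a continuous idempotent on $\Lambda$ with $P^2 = T(ST)S = TS = P$. Its range is trivially contained in $T(X)$, and contains $T(X)$ because $T = T\circ S\circ T = P\circ T$. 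Consequently $T(X) = P(\Lambda)$ is complemented in $\Lambda$, i.e.\ $\{y_n\}$ is a strong $\Lambda$-frame.

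For the backward direction, let $P\colon \Lambda \to T(X)$ be a continuous projection and, using that $T$ is a topological isomorphism onto $T(X)$, set $x_n := T^{-1}(P(e_n))$. For any $(c_n) \in \Lambda$, convergence of $\sum_n c_n e_n$ in $\Lambda$ together with the continuity of $P$ and of $T^{-1}\colon T(X)\to X$ yields
\[
\sum_n c_n x_n \;=\; T^{-1}\!\left(P\!\left(\sum_n c_n e_n\right)\right) \;=\; T^{-1}(P((c_n)_n)),
\]
which in particular proves convergence of $\sum_n c_n x_n$ in $X$ for every $(c_n) \in \Lambda$. Specialising to the sequence $(c_n) = (y_n(x))_n = T(x) \in T(X)$, and using that $P$ acts as the identity on $T(X)$, the right-hand side collapses to $T^{-1}(T(x)) = x$, establishing the Schauder frame identity $\sum_n y_n(x) x_n = x$.

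The main technical obstacle is the continuity of the synthesis operator $S$ in the forward implication: one cannot read it off directly from the assumed pointwise convergence, and must invoke both the Fréchet structure on $\Lambda$ and the completeness of $X$ in order to promote pointwise convergence of the $S_N$ to continuity of $S$ via Banach--Steinhaus. This is precisely the point at which all three standing hypotheses (Fréchet sequence space, $(e_n)$ a Schauder basis, $X$ complete) come together. The Schauder basis assumption on $(e_n)$ is then reused in the backward direction to justify the interchange of the infinite series with the continuous operators $P$ and $T^{-1}$.
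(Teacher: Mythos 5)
Your proof is correct and follows essentially the same route as the paper: in the strong-frame direction you take $x_n := T^{-1}(P(e_n))$ and verify the frame identity via continuity of $T^{-1}\circ P$, and in the converse direction you build the projection $P = T\circ S$ from the synthesis operator, whose continuity comes from Banach--Steinhaus. The only difference is that you spell out the verification that $P$ is a continuous idempotent with range $T(X)$, and you correctly write the projection as $T\circ S$ (the paper's displayed formula has an apparent typo writing $T^{-1}$ in place of $T$); this is a harmless expansion of the same argument.
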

\begin{proof}
Assume $\{y_n\}$ is a strong $\Lambda-$frame; let $x_n:=T^{-1}(P(e_n))$, where $P$ is the projection from $\Lambda$ to $T(X)$ (which exists
since $T(X)$ is complemented). Then, ignoring for the moment the issues of convergence, we have formally
\begin{equation*}
T\left(\sum x_n y_n(x)\right)=\sum y_n(x) P e_n=P\left(\sum y_n(x)e_n\right)=P(T(x))=T(x).
\end{equation*}
To deal with the issues of convergence it is sufficient to observe that 
\begin{equation}
\sum_{0\le n\le k} x_n y_n(x)
=T^{-1}\circ P\left(\sum_{0\le n\le k} u_n(x)e_n)
\right)
\end{equation}
and since the series in the right hand side converges, so does the 
left hand side.\par
On the other hand, if there exists a sequence $(x_n)$ such that $(x_n,y_n)$ is a Schauder frame, then
the following $P$ provides the desired projection:
\begin{equation*}
P((y_n)):=T^{-1}\left(\sum y_n x_n\right)
\end{equation*}
and $P$ is bounded by the uniform 
boundedness principle.
\end{proof}
\begin{corollary}\label{cor : subframe}
Let $\Lambda$ be a Fréchet sequence space as in \ref{prop : strfr}, $X$ a complete locally convex space and $\{y_n\}$ a $\Lambda-$frame for $X$. If there exists a subsequence $y_{n_k}$
which induces a strong $\Lambda-$frame, then $\{y_n\}$ is a strong $\Lambda-$frame.
\end{corollary}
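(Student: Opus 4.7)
The natural strategy is to invoke Proposition \ref{prop : strfr} twice: first extracting a Schauder frame from the strong $\Lambda$-frame structure carried by the subsequence, and then returning via the same proposition to a strong $\Lambda$-frame for the full sequence $\{y_n\}$. Applying the forward direction of Proposition \ref{prop : strfr} to the strong $\Lambda$-frame $\{y_{n_k}\}_k$ yields a sequence $(u_k)_k \subset X$ such that $((u_k, y_{n_k}))_k$ is a Schauder frame and, moreover, $\sum_k d_k u_k$ converges in $X$ for every $(d_k)_k \in \Lambda$. I would then define $\tilde x_n \in X$ by $\tilde x_{n_k} := u_k$ and $\tilde x_n := 0$ for $n \notin \{n_k\}_k$. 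The Schauder frame condition for $((\tilde x_n, y_n))_n$ is then immediate: the partial sums $\sum_{n \le N} y_n(x) \tilde x_n$ coincide with $\sum_{k : n_k \le N} y_{n_k}(x) u_k$, i.e., with a subsequence of partial sums of $\sum_k y_{n_k}(x) u_k = x$, which converges to $x$ in $X$.

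The main obstacle is verifying the second hypothesis of Proposition \ref{prop : strfr} for the pair $((\tilde x_n, y_n))_n$, namely that $\sum_n c_n \tilde x_n = \sum_k c_{n_k} u_k$ converges in $X$ for every $c \in \Lambda$. Since $(e_n)_n$ is a Schauder basis of $\Lambda$, the coordinate evaluations $c \mapsto c_n$ are continuous, so each partial sum operator $S_N : \Lambda \to X$, $S_N(c) := \sum_{n=0}^N c_n \tilde x_n$, is linear and continuous. On the dense subspace $\mathrm{span}\{e_n : n \in \N\}$ of $\Lambda$ the sequence $(S_N(c))_N$ is eventually constant and hence convergent. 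The plan is therefore to establish pointwise boundedness of $(S_N)_N$ on $\Lambda$---which one can extract by comparing the finite partial sums $S_N(c) = \sum_{k : n_k \le N} c_{n_k} u_k$ with partial sums of the continuous operator $(d_k)_k \mapsto \sum_k d_k u_k$ furnished by Proposition \ref{prop : strfr} applied to the subsequence---and then to invoke the Banach--Steinhaus theorem, available since $\Lambda$ is Fr\'echet and hence barrelled, to conclude equicontinuity of $(S_N)_N$. Equicontinuity combined with pointwise convergence on the dense subspace $\mathrm{span}\{e_n\}$ then yields pointwise convergence of $S_N$ on all of $\Lambda$, producing the continuous linear extension $S : \Lambda \to X$.

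With both requirements of Proposition \ref{prop : strfr} verified for the pair $((\tilde x_n, y_n))_n$, its backward direction immediately delivers that $\{y_n\}_n$ is a strong $\Lambda$-frame, as claimed.
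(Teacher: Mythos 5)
Your construction of $\tilde x_n$ by extending $u_k$ by zero, and your use of Proposition \ref{prop : strfr} in both directions, is exactly the route the paper takes; you are essentially unpacking the paper's one-line assertion that ``$\sum c_n x_n$ converges.'' The verification that $((\tilde x_n, y_n))_n$ is a Schauder frame, and the final equicontinuity-plus-dense-convergence argument (valid because $\Lambda$ is Fr\'echet, hence barrelled, and $X$ is complete), are both correct.

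The gap lies precisely at the step you flag as needing ``extraction'': pointwise boundedness of $(S_N)_N$. You propose to compare $S_N(c) = \sum_{k : n_k \le N} c_{n_k} u_k$ with partial sums of the continuous operator $D : (d_k)_k \mapsto \sum_k d_k u_k$. This comparison amounts to evaluating $D$ (or its truncations) at the compressed sequence $(c_{n_1}, \dots, c_{n_{K(N)}}, 0, \dots)$, and for the resulting bounds to be uniform in $N$ you need the restriction-and-reindexing map $\pi : (c_n)_n \mapsto (c_{n_k})_k$ to send $\Lambda$ continuously into $\Lambda$. This is \emph{not} a consequence of the stated hypotheses: $\{y_{n_k}\}$ being a $\Lambda$-frame only guarantees that $(c_{n_k})_k \in \Lambda$ when $(c_n)_n$ is of the special form $(y_n(x))_n$, not for arbitrary $c \in \Lambda$. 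For a general Fr\'echet sequence space with Schauder basis $(e_n)$, continuity of $\pi$ can genuinely fail (e.g.\ for weighted $\ell^2$-type spaces whose weights are not monotone along the subsequence), and without it the pointwise bound on $S_N(c)$ has no source. In the spaces the paper actually uses --- $\mathfrak{s}$ and $\ell^2_s$ with weights $(1+n)^{s}$ --- the weights are nondecreasing, so $k \le n_k$ gives $\|\pi(c)\| \le \|c\|$ at every level and your argument closes; but as written, both your proof and the corollary's statement are missing the hypothesis (continuity of $\pi$, or equivalently monotonicity of the K\"othe weights) that makes this step legitimate. You should either add that hypothesis explicitly or restrict to the concrete $\Lambda$'s used downstream.
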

\begin{proof}
Let $x_{n_k}$ be the elements of $X$ whose existence is implied by proposition \ref{prop : strfr}. Then define
\begin{equation*}
x_n:=\begin{cases}x_{n_k}&\text{ if }n=n_k\\
0&\text{ otherwise}
\end{cases}
\end{equation*}
It is easy to see that $(x_n, y_n)$ is a Schauder frame and $\sum c_n x_n$ converges, hence $\{y_n\}$ is a strong $\Lambda-$frame.
\end{proof}
\begin{remark}
The results above hold, more 
generally, if $\Lambda$ is a 
barrelled locally convex sequence space with 
$\{e_n\}$ a Schauder basis.
It is natural to wonder, given the results on bases, if a Schauder frame in a nuclear Fréchet space must be an absolute Schauder frame. As it turns out, this problem is currently open. 

To give the reader an idea of the complexity of the problem, let us mention a slightly more general version that is known to 
be false: a sequence $\{x_n\}\subset X$ is said to be a \emph{representing system} in $X$ if for each $x\in X$ there exists a sequence $(\alpha_n)$ such that
\begin{equation*}
x=\sum \alpha_n x_n.
\end{equation*}
It is easy to see that the concept is a generalisation of both Schauder bases and frames; in this setting, it is not true that every representing system
in a Fréchet nuclear space is an absolute system, as shown by Kadets and Korobeinik in \cite{Kadets}.

\end{remark}

\vspace{2mm}

\subsection{Fredholm Operators and sc-Fredholm Operators} We next lay out the foundation for the application of the theory of sc-Fredholm operators, as mentioned in the introduction. We first recall the definition of semi-Fredholm and Fredholm operators:
\begin{definition}
    Let $X,Y$ be locally convex spaces. An operator $T:X\to Y$ is said to be upper (respectively, lower) semi-Fredholm
    if its range is closed and its kernel finite-dimensional (respectively, its kernel is closed and its range has finite codimension). 
    An operator is Fredholm if it is both upper and lower semi-Fredholm. The class of upper
    (respectively lower) semi-fredholm operators
    is denoted $\Phi^{+}(X,Y)$ (resp.$\Phi^{-}(X,Y)$). The class of Fredholm operators will be denoted $\Phi(X,Y).$
\end{definition}
In what follows we will be interested in the following
problem: we have two scales of Hilbert spaces $(X_i),
(Y_i)$ (i.e. nested sequences of Hilbert spaces with 
compact inclusions) and an operator$ T$ acting on $T:X_i\to Y_i$ for all $i$. If $T$ is Fredholm for all
$i$ with constant index, when is it true that $T$
is Fredholm when acting on the projective limits? 
A sufficient condition for this to happen
is given by the concept of scale Fredholm operator:
\begin{definition}[\cite{Hofer2007}, section 2.1]
Let $\mathbb X=(X_i), \mathbb Y=(Y_i)$ be scales of Hilbert spaces. An operator $T$ is said to be an $sc-$operator between $\mathbb X$ and $\mathbb Y$
if $T\in \mathcal L(X_i,Y_i)$.
A subspace $C$ of $X_0$ is said to be an $sc-$subspace
if $(C\cap X_i)$ is a scale of Hilbert spaces. Given
two such subspaces $C,D$ we say that $\mathbb X=C\oplus_{sc}D$ if at each level $i$ of the scale
we have $X_i=C_i\oplus D_i$.
An operator $T:X_i\to Y_i$ is said to be sc-Fredholm
if there exist splittings $\mathbb X=K\oplus_{sc} \text{ker}(T)$ and $\mathbb Y=K'\oplus_{sc}T(\mathbb X)$ with both $\text{ker}(T)$ and $K'$ finite dimensional and such that $T_{|K}:K\to T(\mathbb{X})$ is a 
sc-isomorphism.
\end{definition}
It is clear that an $sc-$Frehdolm operator is also
Fredholm at the projective limit.
\begin{proposition}[\cite{wehrheim2016}, lemma 3.6]\label{thm : Fredholm}
   Let $\mathbb X,\mathbb Y$ be two scales of Hilbert
   spaces and $T$ an $sc-$operator between them.
   $T$ is sc-Frehdolm if and only if $T\in 
   \Phi(X_0,Y_0)$ and $T$ is regularising, i.e. $x\in 
   X_0$ together with $T(x)\in Y_i$ implies $x\in X_i$.
\end{proposition}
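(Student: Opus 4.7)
The plan is to prove both implications separately, with the ``only if'' direction amounting to bookkeeping and the ``if'' direction requiring a careful construction of sc-splittings. Throughout I adopt the convention that $X_0\supset X_1\supset\cdots$ with compact dense inclusions (and analogously for $\mathbb Y$), writing $X_\infty:=\bigcap_i X_i$ and $Y_\infty:=\bigcap_i Y_i$.

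For the forward direction, assume $T$ is sc-Fredholm with splittings $\mathbb X = K\oplus_{sc}\ker(T)$ and $\mathbb Y = K'\oplus_{sc} T(\mathbb X)$. At level $0$, $\ker(T)$ is finite-dimensional and $T(X_0)$ has finite codimension equal to $\dim K'_0$, so $T\in\Phi(X_0,Y_0)$. For the regularising property, take $x\in X_0$ with $T(x)\in Y_i$ and write $x=k+n$ with $k\in K_0$ and $n\in\ker(T)$. Then $T(x)=T(k)\in Y_i\cap T(\mathbb X)=T(\mathbb X)_i$ by the sc-subspace hypothesis, and since $T|_K$ is an sc-isomorphism, $k=(T|_K)^{-1}(T(k))\in K_i$; moreover $n\in\ker(T)\subset X_i$ because $\ker(T)$ is a finite-dimensional sc-subspace. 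Hence $x\in X_i$.

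For the converse, I construct both splittings. The regularising hypothesis applied to $T(0)=0$ immediately forces $\ker(T)\subset X_i$ for every $i$. Picking a basis $e_1,\dots,e_m$ of $\ker(T)$ and Hahn--Banach dual functionals $\phi_1,\dots,\phi_m\in X_0^*$, the closed subspaces $K_i:=\bigcap_j\ker(\phi_j|_{X_i})$ assemble into an sc-complement of $\ker(T)$: one verifies nestedness and the decomposition $X_i=K_i\oplus\ker(T)$ directly. For the range, $T(X_0)$ is closed of finite codimension in $Y_0$; by density of $Y_\infty$ in $Y_0$ together with the openness of the set of transverse finite-dimensional subspaces, I pick a finite-dimensional algebraic complement $K'\subset Y_\infty$. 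The continuous projection $P:Y_0\to K'$ along $T(X_0)$ then sends $Y_i$ into itself, and $(I-P)y\in Y_i\cap T(X_0)=T(X_i)$ by regularising, yielding $Y_i=K'\oplus T(X_i)$ at every level. Finally $T|_{K_i}:K_i\to T(X_i)$ is a continuous bijection between Hilbert spaces, hence an isomorphism by the open mapping theorem, and the regularising hypothesis forces the level-$i$ inverses to be compatible with the scale, delivering the required sc-isomorphism.

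The principal obstacle I anticipate is placing the range complement $K'$ simultaneously inside every $Y_i$: this relies on $Y_\infty$ being dense in $Y_0$ (a standard property of compact scales in this setting) combined with the observation that a small perturbation of a basis of a closed complement of a finite-codimensional subspace still spans such a complement. Once this geometric step is arranged, the regularising hypothesis is exactly what is needed to transport the level-$0$ decomposition $Y_0=K'\oplus T(X_0)$ to every higher level and to certify that the induced partial inverse is genuinely an sc-operator.
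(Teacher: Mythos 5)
Since the paper quotes this result verbatim from Wehrheim's survey (\cite{wehrheim2016}, Lemma~3.6) without giving a proof, there is no in-paper argument to compare against; your blind proof must therefore be judged on its own terms. On those terms it is essentially correct, and it follows the argument one would expect to find in the cited reference: the forward direction is a matter of unwinding the definitions of the sc-splittings, and the backward direction builds the two splittings by hand, using the regularising property to force the kernel into $X_\infty$ and to push the level-$0$ range decomposition up the scale via $T(X_0)\cap Y_i=T(X_i)$.

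Two places deserve a word of caution, both of which you flag yourself but dispatch somewhat quickly. First, the claim $Y_i=K'\oplus T(X_i)$ requires not merely finding a complement of $T(X_0)$ inside $Y_\infty$ but also that the resulting projection restricts boundedly at every level; your sentence handles this, since $K'$ has finite dimension and lies in every $Y_i$, but it is worth stating that the density of $Y_\infty$ in $Y_0$ is part of the standing \emph{definition} of an sc-scale in the Hofer--Wysocki--Zehnder/Wehrheim framework (the paper's parenthetical ``nested sequences with compact inclusions'' is silently assuming dense inclusions as well), so the perturbation argument for the complement is legitimate. Second, when constructing the sc-complement $K_i=\bigcap_j\ker(\phi_j|_{X_i})$ of $\ker T$, you verify $X_i=K_i\oplus\ker T$ but not that $(K_i)_i$ is itself a scale, i.e.\ that $K_{i+1}\hookrightarrow K_i$ is dense and compact. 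Compactness is immediate since $K_{i+1}$ is a closed subspace of $X_{i+1}$; density follows because the projection $X_i\to K_i$ along $\ker T$ is continuous, restricts to the corresponding projection $X_{i+1}\to K_{i+1}$ (as $\ker T\subset X_\infty$), and sends the dense subset $X_{i+1}\subset X_i$ onto $K_{i+1}$. With these two points made explicit, your argument is complete.
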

\begin{remark}
The operators we will consider
in this work are always regularising, as one can see thanks to \cite[Theorem~2]{KNS} and the remarks following that result and also to \cite[Theorem 1.7 and 1.8]{RS2}.
\end{remark}

\begin{corollary}[\cite{Hofer2007}, Prop 2.11]
In the same setting as the previous
theorem, let $K:\mathbb X\to \mathbb Y$ be an sc-operator
such that $K(X_i)\subset Y_{i+1}$.
Such an operator is called $sc^+$
and perturbations under 
$sc^+$ operators preserve $sc-$Fredholm operators and their indexes.
\end{corollary}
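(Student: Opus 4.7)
The plan is to reduce the corollary to Proposition~\ref{thm : Fredholm}: it suffices to verify (a) that $T+K \in \Phi(X_0, Y_0)$ and (b) that $T+K$ is regularising in the sense of that proposition. Once these two ingredients are in place, applying Proposition~\ref{thm : Fredholm} in the converse direction will produce the sc-Fredholm conclusion, and the equality of indices will already have been obtained in step (a).

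For (a), I would factor the level-zero action of $K$ as
\[
K : X_0 \longrightarrow Y_1 \hookrightarrow Y_0,
\]
where the first arrow is bounded by the $sc^+$ hypothesis $K(X_i) \subset Y_{i+1}$ (at $i=0$) and the second is the scale inclusion, which is compact by the very definition of a scale of Hilbert spaces. Thus $K$ viewed as an operator $X_0 \to Y_0$ is compact. Since $T \in \Phi(X_0,Y_0)$ (by Proposition~\ref{thm : Fredholm}, because $T$ is assumed sc-Fredholm), the classical Riesz--Schauder stability of the Fredholm class under compact perturbations gives $T+K \in \Phi(X_0,Y_0)$ together with $\operatorname{ind}(T+K) = \operatorname{ind}(T)$.

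For (b), I would argue by induction on $i \ge 0$. Fix $x \in X_0$ with $(T+K)x \in Y_i$; the goal is $x \in X_i$. The case $i=0$ is trivial. Assuming $x \in X_{i-1}$, the $sc^+$ hypothesis yields $Kx \in Y_i$, and consequently $Tx = (T+K)x - Kx \in Y_i$. Since $T$ itself is regularising (again by Proposition~\ref{thm : Fredholm} applied to the sc-Fredholm operator $T$), we conclude $x \in X_i$, which closes the induction and shows that $T+K$ is regularising at every level of the scale.

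Combining (a) and (b), a second invocation of Proposition~\ref{thm : Fredholm} identifies $T+K$ as sc-Fredholm, while step (a) has already identified its index with that of $T$. The only point that requires care is conceptual rather than computational: the $sc^+$ condition is exactly the minimal hypothesis that simultaneously makes $K$ compact at level zero and makes the regularity bootstrap shift the solution up one level at each inductive step. Both uses of $K(X_i) \subset Y_{i+1}$ are essential, and once they are clearly separated the argument is short.
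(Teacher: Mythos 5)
Your argument is correct, and it is precisely the reading the paper intends: the statement is labelled a corollary "in the same setting as the previous theorem" and is cited to \cite{Hofer2007} without an in-text proof, so the expected derivation is exactly a two-step application of Proposition~\ref{thm : Fredholm} as you carry out. Step (a) correctly exploits that the scale inclusion $Y_1 \hookrightarrow Y_0$ is compact by definition of a scale of Hilbert spaces, so $K$ is compact at level zero and the Riesz--Schauder stability theorem gives both the Fredholm property and the index equality for $T+K$. Step (b) is the bootstrap: the $sc^+$ condition shifts $Kx$ up one level at each rung of the induction, so $Tx=(T+K)x-Kx$ inherits the higher regularity, and the regularizing property of $T$ (Proposition~\ref{thm : Fredholm}) pushes $x$ up accordingly; the descending inclusions $Y_j \subset Y_i$ for $i<j$ make the induction close. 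Combining the two and invoking the converse direction of Proposition~\ref{thm : Fredholm} is exactly right. No gap.
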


\vspace{2mm}

\subsection{Neumann series and perturbation of frames}  We conclude this section with a generalised version of the Neumann series trick and an application of the generalised trick to perturbation
of frames.
\begin{theorem} Let $T:X\to X$ be an operator on a complete locally convex space and assume that there exists  a continuous seminorm $p_0$ such that 
for all continuous seminorms $p$ there exists a constant $C_p$ such that
\begin{equation*}
p(Tx)<C_pp_0(x).
\end{equation*}
Assume moreover that $C_{p_0}<1$. Then $I-T$ is invertible.
\end{theorem}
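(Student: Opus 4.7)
The plan is to imitate the classical Neumann series argument, but to exploit the single dominating seminorm $p_0$ to control every continuous seminorm simultaneously. First, I would feed the hypothesis back into itself. Taking $p = p_0$ in the assumption gives $p_0(Tx) \le C_{p_0} p_0(x)$, and an induction then yields $p_0(T^n x) \le C_{p_0}^n p_0(x)$ for all $n \ge 0$. Plugging this into the hypothesis for an arbitrary continuous seminorm $p$ gives, for $n \ge 1$,
\[
p(T^n x) \;=\; p(T(T^{n-1} x)) \;\le\; C_p \, p_0(T^{n-1} x) \;\le\; C_p \, C_{p_0}^{n-1} \, p_0(x),
\]
so the tail decays geometrically at a rate that does not depend on $p$.

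Next, I would form the partial sums $S_N x := \sum_{n=0}^N T^n x$ and check that they are Cauchy in $X$. Indeed, for $M > N$ and any continuous seminorm $p$,
\[
p(S_M x - S_N x) \;\le\; \sum_{n=N+1}^{M} p(T^n x) \;\le\; \frac{C_p\, p_0(x)}{1 - C_{p_0}} \, C_{p_0}^{N},
\]
which tends to $0$ as $N \to \infty$. Since $X$ is complete, the limit $Sx := \lim_N S_N x$ exists for every $x \in X$, and letting $N\to\infty$ with $M\to\infty$ in the same estimate yields
\[
p(Sx) \;\le\; p(x) + \frac{C_p}{1 - C_{p_0}}\, p_0(x),
\]
which shows that $S$ is a continuous linear operator on $X$.

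Finally, I would verify that $S$ is a two-sided inverse of $I - T$. A telescoping calculation gives
\[
(I - T) S_N \;=\; I - T^{N+1} \;=\; S_N (I - T).
\]
Since $p(T^{N+1} x) \le C_p \, C_{p_0}^{N} \, p_0(x) \to 0$ for every continuous seminorm $p$, and since $T$ is continuous (which is itself a consequence of the hypothesis), passing to the limit in both identities yields $(I - T) S = I = S (I - T)$, so $I - T$ is invertible with continuous inverse $S$. There is no genuine obstacle here: the whole point of the statement is to isolate the minimal hypothesis, namely the presence of a single dominating seminorm $p_0$ with $C_{p_0} < 1$, that makes the geometric series argument go through uniformly across the entire defining family of seminorms.
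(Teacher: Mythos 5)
Your proof is correct and follows essentially the same route as the paper's: both rest on the Neumann series together with the key estimate $p(T^n x)\le C_p\,C_{p_0}^{n-1}\,p_0(x)$, which gives a geometric tail uniformly in $p$. You simply spell out the details the paper leaves implicit (Cauchy criterion and completeness to get convergence, continuity of the sum $S$, and the telescoping check that $S$ is a two-sided inverse using continuity of $I-T$), which is a welcome clarification but not a different argument.
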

\begin{proof}
It suffices to prove that the Neumann sum $\sum T^n x$ converges absolutely. To do so, let $p$ be any continuous seminorm. Then
\begin{equation*}
\sum_{n=0}^\infty p(T^n(x))\le p(x)+ C_p\sum_{n=1}^\infty p_0(T^{n-1}(x))\le p(x)+C_p\sum_{n=1} C_{p_0}^{n-1}p_0(x)<\infty
\end{equation*}
and this proves the result.
\end{proof}
As a corollary, we obtain the following perturbation result for Schauder frames and bases:
\begin{corollary}\label{cor : pertframe}
Let $X$ be a complete locally convex space and let $(x_n,y_n)$ be a Schauder 
frame. Let $\tilde y_n$ be a family of continuous functionals on $X$ and 
assume that there exists a continuous seminorm $p_0$ such that for all 
continuous seminorms $p$
\begin{equation*}
\sum |(y_n-\tilde y_n)(x)|p(x_n)<C_pp_0(x)
\end{equation*}
and that $C_{p_0}<1$. Then there exist vectors $\{\tilde x_n\}$ such that 
$(\tilde x_n, \tilde y_n)$ is a Schauder frame. If $(x_n,y_n)$ is a basis,
so is $(\tilde x_n, \tilde y_n)$.
\end{corollary}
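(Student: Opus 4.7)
The plan is to view the difference $y_n - \tilde{y}_n$ as generating a perturbation operator and then apply the generalised Neumann series theorem proved just above. Define
\[
T : X \to X, \qquad T(x) := \sum_{n} (y_n - \tilde{y}_n)(x)\, x_n.
\]
The hypothesis $\sum |(y_n-\tilde y_n)(x)| p(x_n) < C_p p_0(x)$ shows, by completeness of $X$, that the series defining $T(x)$ converges absolutely, and moreover yields the continuity estimate $p(T x) \le C_p p_0(x)$ for every continuous seminorm $p$, with $C_{p_0}<1$ by assumption. The previous theorem then directly gives that $I-T$ is a (continuous) invertible operator on $X$.

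The natural candidate for the new frame vectors is
\[
\tilde{x}_n := (I-T)^{-1}(x_n).
\]
To verify that $(\tilde{x}_n, \tilde{y}_n)$ is a Schauder frame, observe first that, for every $x \in X$, the identity
\[
(I-T)(x) = \sum_n y_n(x) x_n - \sum_n (y_n - \tilde{y}_n)(x) x_n = \sum_n \tilde{y}_n(x) x_n
\]
holds with convergence in $X$, since both series on the right-hand side converge (the first because $(x_n,y_n)$ is a Schauder frame, the second by absolute convergence as above). Applying the continuous operator $(I-T)^{-1}$ term by term — which is legitimate because $(I-T)^{-1}$ is continuous and the partial sums converge — yields
\[
x = (I-T)^{-1}\Bigl(\sum_n \tilde{y}_n(x) x_n\Bigr) = \sum_n \tilde{y}_n(x)\,\tilde{x}_n,
\]
with convergence in the topology of $X$. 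This is precisely the frame identity for $(\tilde{x}_n, \tilde{y}_n)$.

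It remains to handle the basis statement, which boils down to unique representation and linear independence of $\{\tilde{x}_n\}$. If $\sum_n c_n \tilde{x}_n = x$ in $X$, applying the continuous operator $I-T$ term by term gives $\sum_n c_n x_n = (I-T)(x) = \sum_n \tilde{y}_n(x)\, x_n$, and uniqueness of the basis expansion in terms of $\{x_n\}$ forces $c_n = \tilde{y}_n(x)$ for every $n$. The same argument with $x=0$ gives linear independence of $\{\tilde{x}_n\}$, completing the proof in the basis case.

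The only real subtlety is the legitimacy of moving $(I-T)^{\pm 1}$ inside the sums, i.e.\ justifying $(I-T)^{-1}(\sum \tilde y_n(x) x_n) = \sum \tilde y_n(x)\tilde x_n$ and the analogous identity with $I-T$. This is the step that would require the most care in a full write-up, but it follows in a routine way from continuity of $I-T$ and its inverse together with the convergence of the partial sums already established; no summability of $\{\tilde{x}_n\}$-coefficients beyond what the frame property itself supplies is needed.
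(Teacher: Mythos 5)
Your proof is correct and follows the same route as the paper's: build the perturbation operator $T(x)=\sum_n(y_n-\tilde y_n)(x)\,x_n$, invoke the generalised Neumann series theorem to see that $I-T$ is invertible, and then produce the new frame vectors by pushing $x_n$ through the inverse. In fact your choice $\tilde x_n=(I-T)^{-1}(x_n)$ is the right one: since $(I-T)(x)=\sum_n\tilde y_n(x)\,x_n$, continuity of $(I-T)^{-1}$ gives
\[
\sum_n\tilde y_n(x)\,(I-T)^{-1}(x_n)=(I-T)^{-1}\Bigl(\sum_n\tilde y_n(x)\,x_n\Bigr)=x,
\]
whereas the paper's stated choice $\tilde x_n=(I-T)(x_n)$ would produce $\sum_n\tilde y_n(x)\tilde x_n=(I-T)^2(x)\neq x$ in general, and appears to be a typographical slip. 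You also spell out the convergence and term-by-term interchange argument more carefully than the paper does (the paper compresses everything into ``by bijectivity of $I-T$''), and you add the uniqueness argument needed for the basis case, which the paper omits. Both of these elaborations are sound: the interchange is just continuity of $(I-T)^{\pm1}$ applied to the convergent partial sums, and uniqueness follows by applying $I-T$ to any putative expansion $\sum_n c_n\tilde x_n=x$ and comparing coefficients against the (unique) $\{x_n\}$-expansion of $(I-T)(x)$.
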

\begin{proof}
    Define the operator $T(x)=\sum (y_n-\tilde y_n)(x)x_n$. It follows by our
    hypotheses that $T$ is well defined and that $I-T$ is invertible.
    It then suffices to take  $\tilde x_n=(I-T)(x_n)$ and by bijectivity of $I-T$ it follows that $(\tilde x_n,\tilde y_n)$ is a Schauder frame. 
\end{proof}
 This result will allow us to extend the same
technique used in \cite{RS2} to analyse perturbations
in more general locally convex spaces; the drawback will be that
such perturbations will need to satisfy not just one
inequality induced by a certain norm estimate but rather
an infinite amount of them, induced by each seminorm at a different scale. We will then see that a first naive use of the
above corollary can only get us so far and to get stronger
results (such as the Schwarz convergence) a more refined analysis will be needed and that will be where more functional-analytical arguments, such as scale operators, will enter into play.

\subsection{Modular forms}\label{ssec : modular} In this subsection, we gather some of the facts we will need about modular forms, especially in Section \ref{sec : analytic}. For more details, we refer the reader to \cite{Chandrasekharan}and \cite[Section~2]{RV}; see also\cite{BN, Zagier}.

We define the action of the group $SL_2(\R)$ of matrices with real coefficients and determinant 1 on the upper half-plane through M\"obius transformations: for 
\[
\gamma = \begin{pmatrix}
         a & b \\ c & d 
         \end{pmatrix} \in SL_2(\R), \, z \in \C_+ \Rightarrow \gamma z = \frac{az + b}{cz+d} \in \C_+. 
\]
Some elements of this group will be of special interest to us. Namely, we let 
\[
I = \begin{pmatrix} 
    1 & 0 \\ 0 & 1 
    \end{pmatrix}, 
    \,\, T = \begin{pmatrix}
              1 & 1 \\ 0 & 1 
             \end{pmatrix}, \,\, S = \begin{pmatrix}
				  0 & -1 \\ 1 & 0
				  \end{pmatrix}
\]
This already allows us to define the most valuable subgroup of $SL_2(\Z)$ for us: the group $\Gamma_{\theta}$ is defined then as the subgroup of $SL_2(\Z)$ generated by 
$S$ and $T^2.$ We now define the so-called Jacobi theta series. These are defined in $\C_+$ by
\begin{align*} 
\Theta_2(z) &= \sum_{n \in \Z + \frac{1}{2}} q^{\frac{1}{2} n^2},\cr
\Theta_3(z) (= \theta(z)) &=\sum_{n \in \Z} q^{\frac{1}{2} n^2}, \cr 
\Theta_4(\tau) &= \sum_{n \in \Z} (-1)^n q^{\frac{1}{2}n^2}.\nonumber
\end{align*}
Here, $q = q(z) = e^{2\pi i z}.$ These functions satisfy the identity $\Theta_3^4 = \Theta_2^4 + \Theta_4^4.$ Moreover, under the action of the elements $S$ and $T$ of $SL_2(\Z),$ they transform as 
\begin{align}
\nonumber (-iz)^{-1/2} \Theta_2(-1/z) &= \Theta_4(z), \,\,\Theta_2(z+1) = \exp(i\pi/4) \Theta_2(z), \\
\label{eq theta transform} (-iz)^{-1/2} \Theta_3(-1/z) &= \Theta_3(z), \,\,\Theta_3(z+1) = \Theta_4(z), \\
\nonumber (-iz)^{-1/2} \Theta_4(-1/z) &= \Theta_2(z), \,\,\Theta_4(z+1) = \Theta_3(z). 
\end{align}
These functions allow us to construct the classical lambda modular invariant given by 
\[
\lambda(z) = \frac{\Theta_2(z)^4}{\Theta_3(z)^4}.
\]
The lambda invariant satisfies the following $q-$expansion at $i \infty$
\begin{equation}\label{eq lambda}
 \lambda(z) = 16q^{1/2} - 128q + 704q^{3/2} + O(q^2).
\end{equation}
{The function $\lambda$ is also invariant} under the action of elements of the subgroup $\Gamma(2) \subset SL_2(\Z)$ of all matrices $\begin{pmatrix}                         a& b \\ c & d 
\end{pmatrix}$ so that 
$a\equiv b \equiv 1 \mod 2, \, c \equiv d \equiv 0 \mod 2$, { and $\lambda(z)$ never assumes the values $0$ or $1$ for $z\in\C_+$}. Besides this invariance, \eqref{eq theta transform} gives us immediately that 
\begin{align}\label{eq lambda transform}
{\lambda(z+1) = \frac{\lambda(z)}{\lambda(z)-1}}, \, \, \lambda\left(-\frac{1}{z}\right) = 1-\lambda(z).
\end{align}
{We then define the following modular function for $\Gamma_{\theta}$:}
\[
J(z) = \frac{1}{16} \lambda(z) (1-\lambda(z)).
\]
From \eqref{eq lambda transform}, we obtain that $J$ is invariant under the action of $\Gamma_{\theta}:$  
\[
J(z+2) = J(z), \,\, J\left(-\frac{1}{z}\right) = J(z).
\] 
We note that the function $J$ has the following $q-$expansion at $i\infty$:
\begin{equation}\label{eq : J-dec} 
J(z) = q^{1/2} - 24q + O(q^{3/2}).
\end{equation}
Finally, another useful fact will be the $q-$expansion of the \emph{reciprocal} of $J$ at $i\infty$: 
\begin{equation}\label{eq : J-inv-dec} 
\frac{1}{J(1-1/z)} = -4096q + O(q^2).
\end{equation}

\subsection{Bounds on Fourier Interpolation Bases} First, recall the exact formulation of the main result in \cite{RV}: 
\begin{theorem}[Theorem~1 in \cite{RV}]\label{thm : RV} 
There exist a sequence of Schwartz functions $(a_n)_n$ such that $a_0=\widehat{a}_0$ and for all even $f\in\mathcal S$ we have
\begin{equation}
f(x)=\sum f(\sqrt n)a_n(x)+\sum \widehat f(\sqrt n)\widehat a_n(x), \label{EQ : RVint}
\end{equation}
where the right hand side converges pointwise absolutely. Moreover, the functions $a_n$ satisfy the following properties: given $n,m>0$ we have
\begin{align*}
a_{n}(\sqrt m)&=\delta_{n}^m, \quad\widehat a_{n}(\sqrt m)=0, \\
\quad a_0(0) & =\frac12,\widehat a_{n}(0)=\mathbbm 1_{\exists k\in\mathbb N\  n=k^2}\\
a_n(0)&=-\mathbbm 1_{\{n \text{ is a perfect square}\}},
\end{align*}
and both $a_n$ and $\widehat a_n$ are entire functions or order $2$.
\end{theorem}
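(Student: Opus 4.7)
My plan follows the modular-forms approach of \cite{RV}. Assuming such functions $(a_n)_n$ exist, define the generating functions
\[
F^{\pm}(\tau, x) = \sum_{n \ge 0} (a_n(x) \pm \widehat{a}_n(x)) \, e^{i\pi n \tau}, \quad \tau \in \C_+,
\]
and derive the transformation behavior they must satisfy under the generators $T^2$ and $S$ of the theta group $\Gamma_\theta$. Testing the desired interpolation identity on the Gaussian $f_\tau(y) = e^{i \pi \tau y^2}$, whose Fourier transform equals $(-i\tau)^{-1/2} e^{-i \pi y^2 / \tau}$, and invoking the transformations \eqref{eq theta transform} of the Jacobi theta series shows that $F^{\pm}(\cdot, x)$ must behave like (twisted) modular forms of weight $3/2$ for $\Gamma_\theta$, with cusp data at $i\infty$ and $1$ determined by the Gaussian factor $e^{i\pi x^2 \tau}$.

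With the required modular behavior identified, the next step is to \emph{construct} $F^{\pm}$ explicitly as contour integrals of weakly holomorphic modular forms on $\Gamma_\theta$ built from the $\lambda$-invariant and the function $J$ introduced in Section~\ref{ssec : modular}. The integrand is chosen so that its principal parts at the cusps $i\infty$ and $1$ reproduce exactly the boundary values prescribed in the theorem; extracting the $q^{1/2}$-expansion via \eqref{eq : J-dec} and \eqref{eq : J-inv-dec} then defines the functions $a_n(x)$ as the Fourier coefficients of $F^+$ and $F^-$. One checks directly from this integral representation that $a_n(\sqrt m) = \delta_{nm}$, $\widehat a_n(\sqrt m) = 0$, and the stated values at $x=0$. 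The Schwartz regularity of each $a_n$ and its extension to an entire function of order $2$ both follow from the presence of the Gaussian kernel $e^{i\pi x^2 \tau}$ in the integral, combined with classical bounds on Fourier coefficients of weakly holomorphic forms.

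Finally, the interpolation identity \eqref{EQ : RVint} for a general even Schwartz $f$ is established by representing the partial sums of $\sum f(\sqrt n) a_n(x) + \widehat f(\sqrt n) \widehat a_n(x)$ as a contour integral of $F^{\pm}(\tau, x)$ paired against the companion generating series in $f(\sqrt n)$, and then deforming the contour between the two cusps of $\Gamma_\theta$. At the cusp $i\infty$ the integral recovers the sum, while at the cusp $1$ the $S$-transformation of $\theta$ (together with Poisson summation) collapses the integral to $f(x)$. I expect the main obstacle to be the explicit modular construction in the middle step: producing weakly holomorphic forms on $\Gamma_\theta$ with the precise polar data at \emph{both} cusps, and obtaining coefficient bounds sharp enough to guarantee the absolute pointwise convergence of \eqref{EQ : RVint}, is where the specific arithmetic of $\Gamma_\theta$ enters essentially and where the bulk of the work lies.
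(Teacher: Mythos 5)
This theorem is not proved in the present paper; it is quoted verbatim from \cite{RV} (Theorem~1 there), and the associated machinery — the weakly holomorphic forms $g_n^{\pm}$ of weight $3/2$ on $\Gamma_\theta$, the integral representation $b_n^{\pm}(x)=\frac12\int_{-1}^1 g_n^{\pm}(z)e^{\pi i x^2 z}\,\diff z$, and the generating-function identity — is only invoked later, in Section~\ref{sec : analytic}, when bounding those basis functions. Your sketch faithfully reproduces the architecture of the original Radchenko--Viazovska argument: form the eigenfunction generating series $F^{\pm}$, test on the Gaussians $e^{i\pi\tau y^2}$ to extract the required modular functional equations under $T^2$ and $S$, construct $F^{\pm}$ as contour integrals of weakly holomorphic forms built from $\lambda$ and $J$, and read off the $a_n$ from the $q^{1/2}$-expansion while verifying the interpolation data $a_n(\sqrt m)=\delta_n^m$, $\widehat a_n(\sqrt m)=0$, etc.\ from the normalization $g_n^{\pm}(z)=q^{-n/2}+O(q^{1/2})$.

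Two caveats. First, the forms $g_n^{\pm}$ are modular of weight $3/2$ only \emph{with a nontrivial multiplier system} (the $\theta$-multiplier), and that multiplier must be tracked carefully when deriving the functional equations from the Gaussian test functions; your sketch elides it. Second, the passage from Gaussians to general even Schwartz $f$ in \cite{RV} does not proceed by a fresh contour deformation tailored to $f$. Instead one first establishes the polynomial coefficient bound $\sup_x|a_n(x)|,\ \sup_x|\widehat a_n(x)| = O(n^2)$ — the genuinely hard estimate — which gives absolute pointwise convergence of the right-hand side of \eqref{EQ : RVint} for any $f\in\mathcal S$, and one then concludes by continuity of both sides as functionals on $\mathcal S_{\text{even}}$ together with density of the span of the Gaussians $\{e^{i\pi\tau y^2}\}_{\tau\in\C_+}$. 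Your phrase ``deforming the contour\ldots collapses the integral to $f(x)$'' only describes what happens when $f$ is itself a Gaussian; for a general $f$ the contour picture does not directly apply, and the density argument (resting on the uniform bounds you mention at the end) is what closes the proof.
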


In \cite{RV}, the authors proved that the functions $\{a_n\}_{n \in \N}$ belong the the Schwartz class, with a uniform bound of the kind $|a_n(x)| = O(n^2)$ for all $n \in \N$ and all $x \in \R$. By sharpenning their methods and employing other modular form tools, such as the ones from \cite{BRS}, the second author and M. Sousa were able \cite{RS2} to prove the following improved decay bound on the derivatives of functions in the interpolation basis:

\begin{theorem}[Corollary~4.6 in \cite{RS2}]\label{thm : RS2est}
There exist two constants $c, C$ such that for $n\ge 1$ we have
\begin{equation}
|a_n'(x)|+|\widehat a_n'(x)|\lesssim n^{\frac 34}\log^3(1+n) \left(e^{-c\frac{|x|^2}n}\mathbbm 1_{|x|\le Cn}+e^{-c|x|}\mathbbm 1_{|x|>Cn}\right)\label{eq : RS2est}.
\end{equation}
Moreover, we have that
\begin{equation*}
|a_0(x)|\lesssim e^{-\sqrt{\frac\pi2}|x|}, \, \forall \, x \in \R.
\end{equation*}
\end{theorem}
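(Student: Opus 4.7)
The plan is to work with the modular-form generating function representation of the Radchenko--Viazovska basis introduced in \cite{RV} and sharpened in \cite{BRS, RS2}. For $\tau=u+iv\in\C_+$ and $q=e^{2\pi i\tau}$, the basis functions $\{a_n\}$ are packaged into a pair of generating series $F_{\pm}(\tau,x) = \sum_{n\ge 0} a_n(x) q^{n/2} \pm \sum_{n\ge 0}\widehat a_n(x)q^{n/2}$, built from weight $1/2$ forms for $\Gamma_{\theta}$ and carrying the Weil-type Gaussian factor $e^{\pi i \tau x^2}$. Extracting coefficients by integrating along a horizontal line at height $T=\mathrm{Im}(\tau)$ gives, for $n\ge 1$, the formula
\[
a_n(x) = \int_{-1+iT}^{1+iT} F_+(\tau,x)\, e^{-\pi i n\tau}\, d\tau,
\]
and similarly for $\widehat a_n$. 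The bound in the theorem will follow by carefully choosing $T=T(n,|x|)$.

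First I would control the non-Gaussian part of $F_{\pm}$ on a fundamental domain for $\Gamma_{\theta}$. Using the identities $J(\tau+2)=J(\tau)$, $J(-1/\tau)=J(\tau)$ together with the $q$-expansions \eqref{eq : J-dec} and \eqref{eq : J-inv-dec}, and the transformation rules \eqref{eq theta transform} for the theta series, one reduces the problem to pointwise bounds on ratios of theta functions and powers of $\lambda, J$. Sharpened divisor-type estimates from \cite{BRS} yield an upper bound of the form $|F_{+}(\tau,x)\,e^{-\pi i\tau x^2}| \lesssim M(v)$ with $M(v)\lesssim v^{-1/2}\log(1+v^{-1})$ as $v\to 0$ and $M(v)\lesssim 1$ as $v\to\infty$; this refined bound is the source of the $\log$ factors in the final estimate. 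Differentiating in $x$ multiplies the integrand by $\pi i \tau x$ but keeps the Gaussian exponential, so one obtains
\[
|a_n'(x)| \lesssim |x|\int_0^1 e^{-\pi T x^2 + \pi n T}\, M(T)\, du.
\]

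Next I would optimize $T$ according to the two regimes. In the sub-linear range $|x|\le C n$, setting $T\asymp |x|/\sqrt{n}$ after balancing the exponent $-\pi Tx^2 + \pi nT$ against the $x$-prefactor and the $M(T)$ weight, then saddle-point estimating, yields Gaussian decay $e^{-c|x|^2/n}$ with a prefactor of order $n^{3/4}\log^3(1+n)$ coming from the interaction of the $|x|\le \sqrt{n}\cdot n^{1/4}$ factor, the saddle width, and the three logarithmic contributions from the modular ratio. In the range $|x|>C n$, the above optimization puts $T$ at a scale where the contour should instead be deformed through the $S$-transformation $\tau\mapsto -1/\tau$; after this deformation, the leading behaviour at the opposite cusp is controlled by the expansion $1/J(1-1/z) = -4096 q + O(q^2)$ of \eqref{eq : J-inv-dec}, and a straightforward exponential estimate $e^{-c|x|}$ follows. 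Gluing the two regimes matches precisely at the boundary $|x|\sim Cn$. The bound for $a_0$ is separate: computing the residue of $F_+$ at the cusp and noting that the leading coefficient determines the exponential rate gives $|a_0(x)|\lesssim e^{-\sqrt{\pi/2}|x|}$ directly.

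The main obstacle, as usual in this framework, is producing the sharp prefactor $n^{3/4}\log^3(1+n)$ rather than a cruder $O(n)$ or $O(n^{1+\varepsilon})$. Naive triangle-inequality bounds on the $q$-expansion of $F_{+}$ lose a factor of $n^{1/4}$ and at least one logarithm, and recovering these requires the refined theta-function asymptotics and divisor-sum identities from \cite{BRS}, together with a careful treatment of the transition region $|x|\sim Cn$ where the two contour choices are matched. A secondary technical issue is making sure the $x$-derivative can indeed be brought under the integral sign uniformly in $T$, which follows from Morera-type arguments applied to $F_+$ in its domain of holomorphy, combined with the exponential Gaussian bound.
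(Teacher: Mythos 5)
This statement is imported verbatim from \cite{RS2} (Corollary~4.6 there) and is used as a black box in the present manuscript; no proof is given here, so there is no ``paper's own proof'' to compare your attempt against. What follows is therefore an assessment of your sketch on its own terms.

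Your broad strategy --- encoding $\{a_n\}$ into modular-form generating functions, recovering the coefficient $a_n$ by a horizontal contour integral $\int_{-1+iT}^{1+iT}\cdot\,e^{-\pi i n\tau}\,d\tau$, and choosing the height $T$ jointly in $(n,|x|)$ to balance the Gaussian factor $e^{-\pi T x^2}$ against $e^{\pi n T}$ --- is the same two-parameter contour game that \cite{RV,BRS,RS2} play, and the two-regime structure ($|x|\lesssim n$ vs.\ $|x|\gtrsim n$) is correct. However, there are two substantive gaps.

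First, the sharp prefactor $n^{3/4}\log^3(1+n)$ is \emph{not} a saddle-point artifact of the kind your optimization $T\asymp|x|/\sqrt n$ would produce. A direct triangle-inequality bound along the horizontal contour, even with your refined weight $M(v)\sim v^{-1/2}\log(1+v^{-1})$, yields a prefactor closer to $n^{3/2}\log n$ once the extra $|x|$ from differentiating is accounted for (try it with $T\asymp 1/n$, which is what actually normalizes the $e^{\pi nT}$ factor; $T\asymp|x|/\sqrt n$ does not kill the $\pi nT$ term when $|x|\gg\sqrt n$). The loss of roughly $n^{3/4}$ and a $\log^{3/2}$ is exactly what the divisor-sum / $q$-coefficient machinery imported from \cite{BRS} is designed to recover: the prefactor comes from bounding sums of Fourier coefficients of the weakly holomorphic forms $g_n^{\pm}$ (which are weight $3/2$ for $\Gamma_\theta$, not weight $1/2$ as you wrote), not from the choice of height. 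Your remark that the naive bound ``loses a factor of $n^{1/4}$ and at least one logarithm'' underestimates the gap and does not indicate how you would close it, which is the actual content of Corollary~4.6.

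Second, the deformation through $S\colon\tau\mapsto -1/\tau$ in the regime $|x|>Cn$ requires controlling which cusp representative the contour passes near, and the expansion $1/J(1-1/z)=-4096\,q+O(q^2)$ governs the behaviour near $\pm1$, not near $0$; the case analysis on the $\Gamma_\theta$-orbit of the boundary needs to be made explicit before one can assert that the exponential rate $e^{-c|x|}$ comes out. Relatedly, the $a_0$ bound is not a ``residue at the cusp'': $b_0^+=a_0$ has a closed expression in terms of theta values, and the rate $\sqrt{\pi/2}$ is read off from the $q$-expansion of the integrand at the relevant cusp by a direct contour estimate, not from a residue computation.

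In short: right scaffolding, but the central quantitative input (the \cite{BRS}-type coefficient bounds producing $n^{3/4}\log^3(1+n)$) is invoked only by name, and the contour optimization as you describe it does not by itself yield the stated prefactor.
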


Finally, we note that the previous result directly implies a bound on the \emph{Schwartz seminorms} of the interpolation functions, which will be useful in the proofs of the main results below. 

\begin{corollary}\label{cor : seminorms}
Given $\alpha, \beta\in\mathbb N$ we have
\begin{equation*}
\|x^\alpha \partial_\beta a_n\|_\infty+\|x^\alpha \partial_\beta \widehat a_n\|_\infty\lesssim_{\alpha,\beta} n^{\frac{\alpha+\beta}2}n^{\frac34}\log(1+n)^3.
\end{equation*}
\end{corollary}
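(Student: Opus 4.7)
My plan is to parlay the first-derivative estimates of Theorem~\ref{thm : RS2est} into pointwise Gaussian-type bounds for $a_n$ and $\widehat{a}_n$ themselves, and then to use Fourier duality together with the $a_n \leftrightarrow \widehat{a}_n$ symmetry to iterate these up to arbitrary Schwartz seminorms. The starting point is the pointwise envelope
\[
|a_n(x)| + |\widehat{a}_n(x)| \lesssim n^{3/4}\log^3(1+n) \, e^{-cx^2/n}, \qquad |x|\leq Cn,
\]
with $e^{-c|x|}$ decay for $|x|>Cn$. This is obtained by exploiting $a_n(\sqrt{m})=0$ for integer $m\neq n$ and the node spacing $\sqrt{m+1}-\sqrt{m}\lesssim 1/\sqrt{m}$: for $x\in[\sqrt m,\sqrt{m+1}]$ one integrates the Gaussian envelope of $|a_n'|$ from Theorem~\ref{thm : RS2est} over an interval of length $1/\sqrt m$, with the trivial $a_n(\sqrt n)=1$ handled separately, and $\widehat{a}_n$ is treated identically since $\widehat{a}_n(\sqrt{m})=0$ for all integers $m\geq 1$. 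In fact, the same reasoning yields the sharper bound
\[
|\widehat{a}_n(\xi)| \lesssim n^{3/4}\log^3(1+n) \, \frac{e^{-c\xi^2/n}}{1+|\xi|}, \qquad 1\leq|\xi|\leq Cn,
\]
where the extra $1/(1+|\xi|)$ factor comes from the interval length; this refinement will be crucial below.

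With the envelope in hand, the pure $(\alpha,0)$ case follows at once, since $\sup_{x\in\R}|x|^\alpha e^{-cx^2/n}\lesssim n^{\alpha/2}$ gives $\|x^\alpha a_n\|_\infty + \|x^\alpha \widehat{a}_n\|_\infty \lesssim n^{\alpha/2+3/4}\log^3(1+n)$. For the pure $(0,\beta)$ case with $\beta\geq 1$ I would use Fourier inversion, $\|\partial^\beta a_n\|_\infty \leq (2\pi)^\beta \|\xi^\beta\widehat{a}_n\|_{L^1}$, and evaluate the integral using the refined envelope:
\[
\int |\xi|^\beta \frac{e^{-c\xi^2/n}}{1+|\xi|}\,d\xi \;\lesssim\; \int |\xi|^{\beta-1}e^{-c\xi^2/n}\,d\xi \;\lesssim\; n^{\beta/2},
\]
which yields $\|\partial^\beta a_n\|_\infty \lesssim n^{\beta/2+3/4}\log^3(1+n)$. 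Here the $1/(1+|\xi|)$ factor precisely saves the $n^{1/2}$ that would otherwise be lost in the crude $L^1$ estimate.

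For the general mixed $(\alpha,\beta)$ case I would proceed by a joint induction on $\alpha+\beta$ carried out simultaneously for $a_n$ and $\widehat{a}_n$, based on the Fourier identity
\[
x^\alpha\partial^\beta a_n(x) \;=\; \frac{(-1)^\alpha}{(2\pi i)^{\alpha-\beta}}\,\mathcal{F}^{-1}\bigl[\partial_\xi^\alpha\bigl(\xi^\beta\widehat{a}_n(\xi)\bigr)\bigr](x),
\]
expansion of the right-hand side via Leibniz, and the analogous identity with $a_n$ and $\widehat{a}_n$ interchanged. The hard part is precisely this inductive closure: Theorem~\ref{thm : RS2est} only controls first derivatives, so the higher-order derivatives of $\widehat{a}_n$ appearing after the Leibniz expansion have to be bootstrapped from lower-order Schwartz seminorms of $a_n$ via the Fourier symmetry. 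The base cases $\alpha+\beta\leq 1$ are supplied directly by Theorem~\ref{thm : RS2est} together with the envelopes derived above, and each inductive step reduces $(\alpha,\beta)$ to pairs of strictly smaller total weight on the Fourier side, so the induction closes.
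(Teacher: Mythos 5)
Your $(\alpha,0)$ and $(0,\beta)$ steps are correct — the pointwise envelope you derive by integrating the first-derivative bound from Theorem \ref{thm : RS2est} between consecutive nodes $\sqrt{m},\sqrt{m+1}$ is sound, and the extra $1/(1+|\xi|)$ factor legitimately rescues the lost $n^{1/2}$ in the Fourier-inversion estimate for pure $\partial^\beta$. The problem is the mixed case. Your plan is to bound
\[
\|x^\alpha\partial^\beta a_n\|_\infty \lesssim \big\|\partial^\alpha\big(\xi^\beta\widehat{a}_n\big)\big\|_{L^1}
\lesssim \sum_{k=0}^{\min(\alpha,\beta)} \big\|\xi^{\beta-k}\,\partial^{\alpha-k}\widehat{a}_n\big\|_{L^1},
\]
and then close by induction. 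But the $k=0$ term of the Leibniz expansion is $\|\xi^\beta\partial^\alpha\widehat{a}_n\|_{L^1}$, which has the \emph{same} total weight $\alpha+\beta$, not a strictly smaller one — your claim that each inductive step reduces $(\alpha,\beta)$ to pairs of smaller total weight on the Fourier side is simply false. Converting the $L^1$-norm to a sup-norm (e.g.\ $\|g\|_{L^1}\lesssim\|(1+\xi^2)g\|_\infty$) pushes the total weight \emph{up} by $2$. Iterating the duality swaps $a_n\leftrightarrow\widehat{a}_n$ and $(\alpha,\beta)\leftrightarrow(\beta,\alpha)$, and two swaps send you back to where you started. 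Moreover, once $\alpha\ge 2$ the Leibniz sum necessarily contains $\partial^{\alpha}\widehat{a}_n$ with $\alpha\ge 2$, for which you have no pointwise envelope whatsoever: Theorem \ref{thm : RS2est} only controls $\widehat{a}_n'$ pointwise, and the node-integration trick that gave you the $1/(1+|\xi|)$ gain is unavailable for higher derivatives, since they do not vanish at $\sqrt{\N}$.

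The paper takes a genuinely different route that avoids this obstruction entirely. It packages your two one-parameter estimates (the pointwise exponential decay of $a_n$ and of $\widehat{a}_n$) into the single Gelfand--Shilov quantity $\|a_n\|_{S^1_{1,h}}=\|a_n(x)e^{h|x|}\|_\infty+\|\widehat{a}_n(\xi)e^{h|\xi|}\|_\infty$, and invokes the known characterization (from \cite{Chung96}) that finiteness of this norm already \emph{implies} bounds on every mixed Schwartz seminorm with explicit $h$-dependence; it then simply plugs in $h=1/\sqrt{n}$. That black-box equivalence is precisely the step your elementary Fourier-duality argument would have to reprove — the mixed $(\alpha,\beta)$ bounds do not follow by a finite induction from the pure ones — and it is known, but it rests on complex-analytic (Phragm\'en--Lindel\"of-type) arguments rather than Leibniz and Plancherel. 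So either you should cite that equivalence, as the paper does, or you need an entirely different device to close the mixed case; the inductive closure as currently described is a gap.
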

\begin{proof}
We define a weight function $\omega:[0,\infty)\to [0,\infty)$ to be a function which satisfies 
\begin{enumerate}
\item $\omega(2x)=O(\omega(x))$ for $x\to \infty$
\item $\log(x)=o(\omega(x))$ for $x\to \infty$
\item $\varphi(x):=\omega(e^x)$ is convex on $[0,\infty)$
\end{enumerate}
%In addition, we say $\omega$ satisfies the quasi-analiticity condition if
%\begin{equation*}
%\int_1^\infty \frac{\omega(x)}{1+x^2}\diff x=\infty.
%\end{equation*}
We then define the \emph{Gelfand-Shilov norm} of a function $f$ at height $h$ with respect to $\omega$ as 
\begin{equation*}
\|f\|_{S^h_{\omega}} :=\|f(x) e^{h\omega(|x|)}\|_{\infty}+\|\widehat f(\xi)e^{h\omega(|\xi|)}\|_\infty.
\end{equation*}
In case $\omega(x) = |x|^{\alpha},$ we shall write $\|f\|_{S^h_{|\cdot|^{\alpha}}} =:\|f\|_{S^1_{1,h}}.$ It is well-known \cite{Chung96} that the characterization above of the Gelfand-Shilov norm implies (and is in fact equivalent to) 
\[
\|x^{\alpha} \partial^{\beta}f\|_{\infty} \lesssim (C \cdot h)^{|\alpha| + |\beta|} \cdot \alpha^{\alpha} \cdot \beta^{\beta} \|f\|_{S^1_{1,h}}. 
\]
Using now \eqref{eq : RS2est} it is easy to see that, for $n>c^{-2}$
\begin{align*}
\|a_n\|_{S^1_{1,\frac1{\sqrt n}}}&\le \|(e^{-c\frac{|x|^2}n}+e^{-c|x|})e^{\frac{|x|}{\sqrt n}}\|_\infty n^{\frac34}\log^3(1+n)\lesssim n^{\frac34}\log^3(1+n).
\end{align*}
Thus, by using the description of the $S^1_{1,h}$-norm above, we get that
\begin{equation*}
\sup \frac{\|x^\alpha \partial_\beta a_n\|_\infty}{\left(\frac{c}{\sqrt n}\right)^{\alpha+\beta}\alpha^\alpha\beta^\beta}\lesssim n^{\frac34}\log^3(1+n)
\end{equation*}
and similarly for $\widehat{a}_n$, finishing our proof. 
\end{proof}
These results imply, in particular, that the Radchenko-Viazovska functions form a Schauder basis of $\mathcal S$:

\begin{corollary}
    Let $x_0=\frac{a_0+\widehat a_0}2$, $x_{n>0}=a_n, y_{n>0}=\widehat a_n$. Then the set $\{x_n\}_{n\in\mathbb N}\cup \{y_n\}_{n=1}^\infty$ is a Schauder basis of $\mathcal S_{\text{even}}$,
\end{corollary}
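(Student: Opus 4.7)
The plan is to verify directly the three defining properties of a Schauder basis, using Corollary \ref{cor : seminorms} as the workhorse. Since $a_0=\widehat a_0$, the vector $x_0=(a_0+\widehat a_0)/2$ coincides with $a_0$, and the Radchenko–Viazovska formula from Theorem \ref{thm : RV} collapses into
\[
f = (f(0)+\widehat f(0))\,x_0 + \sum_{n\ge 1} f(\sqrt n)\,x_n + \sum_{n\ge 1}\widehat f(\sqrt n)\,y_n,
\]
so the candidate coordinate functionals are $c_0(f)=f(0)+\widehat f(0)$, $c_n^x(f)=f(\sqrt n)$, $c_n^y(f)=\widehat f(\sqrt n)$. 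Continuity of all of these on $\mathcal S_{\mathrm{even}}$ is immediate, since point evaluation is continuous on $\mathcal S$ and the Fourier transform is a topological isomorphism of $\mathcal S$.

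The second and main step is to upgrade pointwise convergence to Schwartz convergence. Fix seminorm indices $\alpha,\beta$. By Corollary \ref{cor : seminorms},
\[
\|x^\alpha\partial^\beta a_n\|_\infty + \|x^\alpha\partial^\beta \widehat a_n\|_\infty \lesssim_{\alpha,\beta} n^{(\alpha+\beta)/2+3/4}\log^3(1+n).
\]
On the other hand, $f,\widehat f\in\mathcal S$ gives $|f(\sqrt n)|+|\widehat f(\sqrt n)|\lesssim_k (1+n)^{-k}$ for every $k\in\N$. Choosing $k$ larger than $(\alpha+\beta)/2+2$, the tail series
\[
\sum_{n\ge 1}|f(\sqrt n)|\,\|x^\alpha\partial^\beta a_n\|_\infty + \sum_{n\ge 1}|\widehat f(\sqrt n)|\,\|x^\alpha\partial^\beta \widehat a_n\|_\infty
\]
converges. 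Since this holds for every $(\alpha,\beta)$, the Radchenko–Viazovska expansion converges absolutely in the Schwartz topology. The Schwartz limit agrees with the pointwise limit, which is $f$ by Theorem \ref{thm : RV}.

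The third step is uniqueness, for which I would use the biorthogonal structure already encoded in Theorem \ref{thm : RV}. Suppose $0=c_0 x_0+\sum_{n\ge 1}c_n^x x_n+\sum_{n\ge 1}c_n^y y_n$ in $\mathcal S$; because the convergence is in $\mathcal S$ (hence uniform, and preserved by the Fourier transform), I can apply the evaluation functionals $f\mapsto f(\sqrt m)$ and $f\mapsto \widehat f(\sqrt m)$ termwise. For $m\ge 1$ the relations $a_n(\sqrt m)=\delta_n^m$, $\widehat a_n(\sqrt m)=0$ from Theorem \ref{thm : RV}, combined with $a_0(\sqrt m)=0$ (which in turn follows by applying the Radchenko–Viazovska formula to $a_0$ itself and using $a_0=\widehat a_0$ together with $a_0(0)=1/2$), yield $c_m^x=0$ and $c_m^y=0$ for all $m\ge 1$. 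Evaluating the resulting identity $0=c_0 x_0$ at $x=0$ using $x_0(0)=a_0(0)=1/2\neq 0$ then forces $c_0=0$.

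The main obstruction is bookkeeping around the index $n=0$: because $a_0=\widehat a_0$, the two sums in the RV formula share the same $n=0$ basis element, which is why the coefficient functional $c_0$ combines both $f(0)$ and $\widehat f(0)$, and why the verification of $a_0(\sqrt m)=0$ (needed for cleanly extracting $c_m^x, c_m^y$ in the uniqueness step) requires an extra consistency check. Once these degeneracies are tracked, the seminorm bound from Corollary \ref{cor : seminorms} does all the analytic work.
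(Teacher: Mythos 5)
Your proof is correct and follows essentially the route the paper intends (the paper states the corollary immediately after Corollary \ref{cor : seminorms} as a direct consequence, without writing out the argument). You identify the coefficient functionals $f\mapsto f(0)+\widehat f(0)$, $f\mapsto f(\sqrt n)$, $f\mapsto \widehat f(\sqrt n)$, upgrade pointwise to absolute Schwartz convergence via the seminorm bounds of Corollary \ref{cor : seminorms} against the rapid decay of $f(\sqrt n)$ and $\widehat f(\sqrt n)$, and establish uniqueness via biorthogonality, with appropriate care for the $n=0$ degeneracy stemming from $a_0=\widehat a_0$. Your self-contained derivation of $a_0(\sqrt m)=0$ for $m\ge 1$ (by plugging $a_0$ into the RV formula, getting $a_0(\sqrt m)=(2a_0(0)+1)a_0(\sqrt m)=2a_0(\sqrt m)$) is a nice touch, since the paper's statement of Theorem \ref{thm : RV} does not list this relation explicitly.
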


\subsection{Notation and conventions}\label{ssec : weights} Whenever we write $a \lesssim b$, we mean that $a \le C \cdot b$, with $C$ an absolute constant; if instead we write $a \lesssim_{\alpha,\beta,\gamma} b$, it is meant to be understood as $a \le b$ with $C$ being a constant depending on the parameters $\alpha,\beta,\gamma$. 

Throughout this manuscript, we shall work with several different function spaces, the most prominent of which being the sequence spaces 
\[
\ell^2_s(\N) := \left\{ \bx \in \C^{\N} \colon \sum_{n \ge 1} |\bx_n|^2 (1+n)^{2s} < +\infty \right\},
\]
with the natural induced norm. We then define the space 
\[
\mathfrak{s} := \left\{ \bx \in \C^{\N} \colon \lim_{n\to \infty} |\bx_n|\cdot n^k = 0, \, \forall \, k > 0\right\}.
\]
These spaces of sequences are natural counterparts to \emph{Lebesgue-Sobolev spaces} and the Schwartz class: indeed, we define for each $t > 0$
\[
H^t(\R) : = \left\{ f \in \mathcal{S}'(\R) \colon \int_{\R} |\widehat{f}(\xi)|^2 (1+|\xi|)^{2t} \, \diff \xi < +\infty\right\}.
\]
With this definition, we define for $p, q > 1, p^{-1} + q^{-1} = 1,$ the spaces
\[
\mathcal{H}_{p, q}^s:=\left\{f\in \mathcal{S}'(\R)\colon f \in H^{q s}, \widehat{f} \in H^{p s},\|f\|_{\mathcal{H}_{p, q}^s}^2=\|f\|_{H^{p s}}^2+\|\widehat{f}\|_{H^{q s}}^2\right\}.
\]
In the particular case of $p=q=2,$ we shall denote ${\bf H}^s := \mathcal{H}_{\frac{s}{2}, 2,2}.$

In the same way as the Schwartz space is the projective limit of ${\bf H}^s$ with $s \to \infty$, we have that 
$$\mathfrak{s} = \lim_{\infty \leftarrow s} \ell^2_s(\N).$$
This last fact will be of pivotal importance in our proofs below.

Finally, let us also note that the crucial fact that the embeddings 
$$\ell^2_s(\N)\hookrightarrow \ell^2_{s'}(\N), \, s>s'\ge 0$$ are compact: this follows from the fact that $\ell^2_s(\N)$ can be identified with $H^s(\T)$ naturally through Fourier series, together with the Rellich-Kondrachov embedding theorem. 

\section{Schwartz regularity of perturbed basis functions}\label{sec : schw-g-s}

\subsection{Proof of Theorem \ref{thm : pertSch}} In order to prove Theorem \ref{thm : pertSch}, we will maintain the idea of perturbing our basis via an operator whose invertibility we prove via Neumann series, as done in \cite{RS2}, but this will be accomplished by taking into account the spaces $\mathcal H^s$, $\ell^2_s$ in the framework of $sc-$Fredholm operators.

\begin{proof}[Proof of Theorem \ref{thm : pertSch}]
{\bf Step 1.} We first prove the result with the additional hypothesis that $\varepsilon_0=0$. \\

Let $\mathcal R:\mathcal H^s\to \mathbb C\oplus\ell^2_s\oplus \ell^2_s$ be the formal analysis operator associated to our basis; namely
\begin{equation*}
\mathcal R(f):=((f(0)+\widehat f(0)), (f(\sqrt n))_n, (\widehat f(\sqrt n))_n).
\end{equation*}
While this operator is only densely defined (a priori, it is only defined on $\mathcal S_{even}$ where it induces an isomorphism with $\mathbb C\oplus \mathfrak s\oplus \mathfrak s$), it suggests the following approach:
 we will construct an isomorphism $T$ of $\C\oplus \ell_s^2\oplus \ell_s^2$ which satisfies
\begin{equation*}
T\circ \mathcal R(f)=((f(0)+\widehat f(0)),(f(\sqrt{n+\varepsilon_n}))_n,(\widehat f(\sqrt{n+\varepsilon}))_n).
\end{equation*}
We will then prove that $T$ is continuous on $\mathbb C\oplus \mathfrak s\oplus \mathfrak s$ and invertible there as well, thus proving the theorem since we 
can obtain the basis by inverting (and since basis expansions in
Fréchet nuclear spaces converge absolutely).
To construct $T$, let 
$$F(z,\bx,\by):=za_0(x)+\sum_{i=1}^\infty \bx_{i} a_i(x)+\sum_{i=1}^\infty \by_{i} \widehat a_i(x),$$
and define $T$ as $T(z, \bx,\by)=(T_0,T_1,T_2)$ with
\begin{align*}
T^0(z,\bx,\by)&:=z=F_{z,\bx,\by}(0)+\widehat F_{z,\bx,\by}(0)\\
T^1(z,\bx,\by)&:=(F_{z,\bx,\by}(\sqrt{1+\varepsilon_1}), \dots, F_{z,\bx,\by}(\sqrt{n+\varepsilon_n})\dots)\\
T^2(z,\bx,\by)&:=(\widehat F_{z,\bx,\by}(\sqrt{1+\varepsilon_1}), \dots, \widehat F_{z,\bx,\by}(\sqrt{n+\varepsilon_n})\dots)=T^1(z,\by,\bx).
\end{align*}
We now calculate the Hilbert-Schmidt of $I-T$ in $\mathbb C\oplus \ell^2_s\oplus \ell^2_s$. To do so, we choose the following orthonormal basis of $\ell^2_s$: let $e_n=c_n \delta_n$,
i.e. the standard $\ell^2$ canonical basis properly rescaled (namely, we take $c_n=(1+n)^{-s}$). Then
\begin{align*}
\|I-T\|^2_{HS}&= \|(I-T)(1,0,0)\|^2+\sum_{n=1} \|(I-T)(0,e_n,0)\|^2+\sum_{n=1}\|(I-T)(0,0,e_n)\|^2\\
&=\sum_{n=1} \|(I-T)(0,e_n,0)\|^2+\sum_{n=1}\|(I-T)(0,0,e_n)\|^2\\
&\lesssim\sum_{n=1}^\infty|\langle (I-T)(0,e_n,0),(1,0,0)\rangle|^2 + \sum_{n=1}^\infty \sum_{k=1}^\infty |\langle (I-T)(0,e_n,0),(0,e_k,0)\rangle|^2+\\
&+\sum_{n=1}^\infty|\langle (I-T)(0,0,e_n),(1,0,0)\rangle|^2+\sum_{n=1}^\infty \sum_{k=1}^\infty |\langle (I-T)(0,0,e_n),(0,0,e_k)\rangle|^2
\end{align*}
Due to the symmetry $T^2(\bx,\by)=T^1(\by,\bx)$, it suffices to prove the desired bound on 
\begin{equation*}
\mathcal I:=\sum_{n=1}^\infty |\langle (I-T)(0,e_n,0),(1,0,0)\rangle|^2 + \sum_{n}\sum_{k}|\langle (I-T)(0,e_n,0),(0,e_k,0)\rangle|^2,
\end{equation*}
since the other can be proved similarly. By definition of $T$ we have
\begin{equation*}
\langle (I-T)x,y\rangle=\begin{cases}0&\text{ if }x=y=(1,0,0)\\
a_0(\sqrt k)-a_0(\sqrt{k+\varepsilon_k})&\text{ if }x=(0,e_k,0), y=(1,0,0)\\
a_n(\sqrt k)-a_n(\sqrt{k+\varepsilon_k})&\text{ if } x=(0,e_n,0), y=(0,e_k,0)
\end{cases}
\end{equation*}
Hence, taking into account the bounds of Theorem \ref{thm : RS2est} and the fact that $a_0(\sqrt k)=0$ if $k>0$
\begin{align*}
\mathcal I&\le \sum_k (1+k)^{2s} C|\varepsilon_k|^2e^{-K\sqrt{k+\varepsilon_n}}+
\sum_{n,k\ge  1}(1+n)^{-2s}(1+k)^{2s}|a_n(\sqrt{k+\varepsilon_n})-a_n(\sqrt k)|^2\\
&\le C_s\|\varepsilon_k\|_\infty +\sum_{n=1}^\infty (1+n)^{-2s}n^{\frac32}\log^6(1+n)
\sum_{k=1}^{Cn} (1+k)^{2s} \frac{|\varepsilon_k|^2}{k}e^{-\frac nk}\\
&
+\sum_{n=1}^\infty (1+n)^{-2s}n^{\frac32}\log^6(1+n)\sum_{k=Cn}^\infty  (1+k)^{2s} \frac{|\varepsilon_k|^2}{k}e^{-ck}.
\end{align*}
The last series can easily be bounded by $C_s\|\varepsilon_k\|_\infty$, thus if $s$ is sufficiently large so that $\sum_n (1+n)^{-2s}n^{\frac 32}\log^6(1+n)<\infty$,
we must only focus on 
$$\sum_{k=1}^{Cn} (1+k)^{2s} \frac{|\varepsilon_k|^2}{k}e^{-\frac nk}.$$
Recall that our assumptions imply
$|\varepsilon_k|\le \varepsilon k^{\alpha}$ for some $\alpha<-\frac54$; this in turn yields that 
\begin{align*}
\mathcal I&\le C_s\|\varepsilon_k\|+\sum_{n=1}^\infty (1+n)^{-2s}n^{\frac32}\log^6(1+n)
\sum_{k=1}^{Cn} (1+k)^{2s} \frac{|\varepsilon_k|^2}{k}e^{-\frac nk}\\
&\lesssim_s|\varepsilon|+|\varepsilon|\sum_{n=1}^\infty n^{-2s+\frac32}\log^6(1+n)
\sum_{k=1}^{Cn} k^{2s+2\alpha-1} e^{-\frac nk}.
\end{align*}
We rewrite thus the right hand side as
\begin{equation*}
\mathcal I
\lesssim |\varepsilon|+|\varepsilon| \sum_{n=1}^\infty n^{2\alpha+\frac32}\log^6(1+n)
\left(\frac1n\sum_{k=1}^{Cn} \left(\frac kn\right)^{2s+2\alpha-1} e^{-\frac nk}\right)
\end{equation*}
and we note that the inner sum is a Riemann sum, hence
\begin{equation*}\left(\frac1n\sum_{k=1}^{Cn} \left(\frac kn\right)^{2s+2\alpha-1} e^{-\frac nk}\right)\lesssim _s
\int_0^C x^{2s+2\alpha-1}e^{-x}\diff x,
\end{equation*} so provided $s\gg_{\alpha} 1$ we have
\begin{align*}
\mathcal I&\lesssim_{s,\alpha} |\varepsilon|+|\varepsilon| \sum_{n=1}^\infty n^{2\alpha+\frac32}\log^6(1+n) \int_0^Cx^{2s+2\alpha-1}e^{-x}\diff x\\
&\le C_{s,\alpha}|\varepsilon|,
\end{align*}
provided that $2\alpha+\frac32<-1$, which is indeed implied by our assumptions. Thus it follows that as $\varepsilon\to 0$ so does 
$\mathcal I \to 0$, proving that there exists a $\varepsilon(\alpha, s)$ such that $\|I-T\|_{\text{HS}}<1$. If we could make $\varepsilon$ independent
of $s$ we could deduce the theorem directly, but this seems challenging through the current methods. Hence, we employ Fredholm theory: namely, note that 
$$\|I-T\|_{\text{HS}}<+\infty \, \Rightarrow T \in \Phi(\C \oplus \ell^2_s \oplus\ell^2_s, \C \oplus \ell^2_s\oplus \ell^2_s),$$ 
that is, $T$ is Fredholm. Moreover, we have that 
$$i(T;\, \C\oplus \ell^2_s \oplus\ell^2_s, \, \C\oplus \ell^2_s\oplus \ell^2_s) = 0, \forall \, s > s_0.$$ 
This follows from the fact that 
$$I-T \in \text{HS}(\C \oplus \ell^2_s \oplus \ell^2_s,\C \oplus \ell^2_s\oplus \ell^2_s) \subset K(\C \oplus \ell^2_s \oplus \ell^2_s,\C \oplus \ell^2_s\oplus \ell^2_s).$$
A careful look at the proof proves that $T$ is actually $sc^+,$ since the HS norm is finite even from $\C \oplus \ell^2_s\oplus\ell^2_s$ into $\C \oplus \ell^2_{s'}\oplus\ell^2_{s'}$ for $s'>s,$ with the difference between $s$ and $s'$ bounded. Here, note that we have implicitly used the fact that the spaces $\ell^2_s$ are compactly embedded into one another, as previously mentioned in Section \ref{ssec : weights}

If we take $\varepsilon$ small enough (depending only on $\alpha$ now) so that there exists one $s_0$ such that 
$$\|I-T\|_{\text{HS}(\C \oplus \ell^2_{s_0} \oplus \ell^2_{s_0})}<1,$$
$T$ will be injective on $\C \oplus \ell^2_s \oplus\ell^2_s$ for all $s\ge s_0$, so that one can use Proposition \ref{thm : Fredholm}. Indeed, that result implies that $T$ is a Fredholm operator of index $0$ in the limit sequence space $\mathfrak s$, and since it is injective, it has to be bijective. Since it is also a continuous map, it is a linear isomorphism, and we have finished the proof, provided $\varepsilon_0=0$.

\vspace{2mm}

\noindent {\bf Step 2:} We now deal with the general case. In 
order to do it, we first apply {\bf Step 1} with the sequence 
$\tilde \varepsilon=(0,\varepsilon_1,\varepsilon_2,\dots)$. 

This allows us to obtain a new basis - with associated dual 
basis composed of dirac deltas and their composition with 
$\mathcal F$ - associated to the sampling points 
$\sqrt{n+\tilde \varepsilon_n}$. Let this new basis be denoted 
by $\{f_{1,n}\}_{n \in \N} \cup \{g_{1,n}\}_{n \in \N}$ (the 
ordering is irrelevant since the space is nuclear), and denote 
its dual basis by $(\delta_{x_n},\delta_{y_n}\circ 
\mathcal{F})_{n \ge 0}$. 

We now wish to employ Corollary \ref{cor : pertframe}. Indeed, define a ``perturbed'' dual basis as 
\begin{equation}\label{eq:new-frame} 
(\delta_{x_n},\delta_{y_n}\circ \mathcal{F})_{n \ge 1} \cup (\delta_{\varepsilon_0},\delta_{\varepsilon_0} \circ \mathcal{F}).
\end{equation} 
By Corollary \ref{cor : pertframe}, it suffices to find a seminorm $p_0$ such that, for any $f \in \mathcal{S}$, 
\[
 |h(0) - h(\varepsilon_0)|p(f_{1,0}) + |\widehat{h}(0) - \widehat{h}(\varepsilon_0)|p(g_{1,0}) \le C_p p_0(h).
\]
Hence, we simply take $p_0(h) = \|h'\|_{\infty} + \|\widehat{h}'\|_{\infty}$; as long as $|\varepsilon_0|$ is sufficiently small, the inequality above holds for $p=p_0$ with $C_{p_0} < 1$, and hence, by Corollary \ref{cor : pertframe}, we have that there is a ``new'' set of basis functions $\{f_n\}_{n \in \N} \cup \{g_n\}_{n \in \N}$ associated with the dual basis \eqref{eq:new-frame}. Hence, we finish also this case, and the Theorem is thus proved, as desired. 
\end{proof}

\subsection{Remarks} We gather below some relevant generalizations and comments on possible extensions and limitations of the proof of Theorem \ref{thm : pertSch}, starting with its version at \emph{all} Schwartz functions. 

\begin{remark}[Perturbed Schwartz formulas for general functions] Although we have restricted our attention so far to the more popular interpolation formula for even Schwartz functions from \cite{RV}, we note that a similar method may be used to obtain a result for \emph{general} Schwartz functions:  

\begin{theorem}\label{thm : pertSch-odd} Let $\{\varepsilon_n\}_{n \in \N}$ denote a sequence such that $|\varepsilon_n| < c (1+n)^{-\frac{5}{4} - \delta}$, for some $\delta > 0$. Then if $c$ is small enough, there exists a family of Schwartz functions $\{\tilde{h}_n\} \cup \{\tilde{k}_n\}$ such that for any $f \in \mathcal{S}_{\text{odd}}$, we have 
\[
f(x) = \sum_{n \ge 0} \left( f(\sqrt{n+\varepsilon_n}) \tilde{h}_n(x) + \widehat{f}(\sqrt{n+\varepsilon_n})\tilde{k}_n(x) \right).
\]
Moreover, the series converges \emph{absolutely} in the Schwartz topology, and such a decomposition is \emph{unique}. 
\end{theorem}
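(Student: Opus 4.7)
The plan is to mirror the proof of Theorem \ref{thm : pertSch} essentially verbatim, replacing the Radchenko--Viazovska even interpolation basis $\{a_n\}_{n\ge 0}$ with its odd analogue $\{b_n\}_{n\ge 1}$, also constructed in \cite{RV}, whose basis functions satisfy pointwise bounds of the form \eqref{eq : RS2est} and hence Gelfand--Shilov seminorm estimates as in Corollary \ref{cor : seminorms}. Under the reduction $\varepsilon_0=0$ -- which is the \emph{trivial} configuration for odd functions, since $f(0)=\widehat f(0)=0$ -- the $\mathbb C$-factor that in the even case encoded the constraint $a_0=\widehat a_0$ simply disappears, so one may work directly on the scale $\{\ell^2_s\oplus\ell^2_s\}_s$.

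Concretely, I would introduce the analysis map $\mathcal R_{\text{odd}}(f) := ((f(\sqrt n))_{n\ge 1},(\widehat f(\sqrt n))_{n\ge 1})$ and the perturbation operator $T : \ell^2_s\oplus\ell^2_s \to \ell^2_s\oplus\ell^2_s$ defined by
\[
T(\bx,\by)=\bigl((F_{\bx,\by}(\sqrt{n+\varepsilon_n}))_{n\ge 1},\,(\widehat F_{\bx,\by}(\sqrt{n+\varepsilon_n}))_{n\ge 1}\bigr),
\]
where $F_{\bx,\by}:=\sum_{n\ge 1} \bx_n b_n + \sum_{n\ge 1} \by_n \widehat b_n$. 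Exploiting the symmetry $T^2(\bx,\by)=T^1(\by,\bx)$ reduces the Hilbert--Schmidt norm of $I-T$ to the single double sum
\[
\mathcal I=\sum_{n,k\ge 1}(1+n)^{-2s}(1+k)^{2s}\,|b_n(\sqrt{k+\varepsilon_k})-b_n(\sqrt k)|^2,
\]
and the Riemann-sum computation used in the proof of Theorem \ref{thm : pertSch}, driven by $|\varepsilon_n|\lesssim (1+n)^{-5/4-\delta}$ and \eqref{eq : RS2est} applied to $b_n$, yields $\|I-T\|_{\mathrm{HS}}\le C_{s,\delta}\,|\varepsilon|$ for $s$ large enough. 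From this point the argument is literally that of Step 1 of the proof of Theorem \ref{thm : pertSch}: $T$ is $sc^+$-Fredholm of index $0$ on the scale and bijective on $\ell^2_{s_0}\oplus\ell^2_{s_0}$ as soon as $\|I-T\|_{\mathrm{HS}}<1$ at some base level $s_0$, so Proposition \ref{thm : Fredholm} promotes it to a linear isomorphism on $\mathfrak s\oplus\mathfrak s$. Inverting and transporting the canonical basis yields the desired family $\{\tilde h_n\}\cup\{\tilde k_n\}$, with absolute Schwartz convergence following from nuclearity of $\mathcal S_{\text{odd}}$.

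To handle a possibly nonzero $\varepsilon_0$, I would proceed as in Step 2 of the proof of Theorem \ref{thm : pertSch}: run the above argument with $\tilde\varepsilon=(0,\varepsilon_1,\varepsilon_2,\ldots)$, then apply Corollary \ref{cor : pertframe} to swap the trivial functionals $(\delta_0,\delta_0\circ\mathcal F)$ -- which annihilate every odd Schwartz function -- for the perturbed $(\delta_{\sqrt{\varepsilon_0}},\delta_{\sqrt{\varepsilon_0}}\circ\mathcal F)$; the seminorm $p_0(h)=\|h'\|_\infty+\|\widehat h'\|_\infty$ controls $|h(\sqrt{\varepsilon_0})|+|\widehat h(\sqrt{\varepsilon_0})|$ with an additional factor of $\sqrt{|\varepsilon_0|}$ (using $h(0)=\widehat h(0)=0$), so the constant $C_{p_0}$ can be made strictly less than $1$ for $|\varepsilon_0|$ small. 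The main potential obstacle is verifying that the odd basis functions $\{b_n\}$ satisfy \eqref{eq : RS2est} with the same $n^{3/4}\log^3(1+n)$ growth rate as their even counterparts; this is not proved explicitly in \cite{RS2}, but should be a direct consequence of applying the modular-form techniques of that paper to the odd generating functions (parameterised by an analogous modular form for $\Gamma_\theta$, as in Section \ref{ssec : modular}), and introduces no essentially new analytic difficulty beyond bookkeeping.
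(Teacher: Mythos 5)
Your proposal breaks at the very first step: there is no pure point-evaluation interpolation basis $\{b_n\}_{n\ge 1}$ for $\mathcal{S}_{\text{odd}}$. The odd Radchenko--Viazovska formula (and this is exactly what the paper recalls) necessarily carries a derivative term at the origin,
\[
f(x) = d_0^{+}(x)\,\frac{f'(0) + i\widehat{f}'(0)}{2} + \sum_{n\ge 1} c_n(x)\,\frac{f(\sqrt n)}{\sqrt n} - \sum_{n\ge 1} \widehat{c_n}(x)\,\frac{\widehat f(\sqrt n)}{\sqrt n},
\]
and this term cannot be dropped: for odd $f$ the data $\{f(\sqrt n),\widehat f(\sqrt n)\}_{n\ge 1}$ is \emph{not} enough to reconstruct $f$, precisely because $f(0)=\widehat f(0)=0$ contributes no information and the missing one-dimensional piece must be supplied by $f'(0)+i\widehat f'(0)$. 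So the claim that $\varepsilon_0=0$ is the ``trivial configuration'' for odd functions, letting the $\mathbb C$-factor ``simply disappear,'' is the opposite of what happens. In the paper's Proposition \ref{prop : oddpert} (the $\varepsilon_0=0$ case), the functional $(\delta_0'+i\delta_0'\circ\mathcal F)/2$ and the corresponding vector $d_0^+$ remain part of the frame, exactly as the $\mathbb C$-factor did in the even case. Your Hilbert--Schmidt computation on $\ell^2_s\oplus\ell^2_s$ alone therefore targets a map that is not invertible, because the analysis operator $f\mapsto((f(\sqrt n))_{n\ge 1},(\widehat f(\sqrt n))_{n\ge 1})$ has a one-dimensional deficiency on $\mathcal S_{\text{odd}}$.

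The same misconception propagates to your treatment of $\varepsilon_0\ne 0$. You propose to swap the ``trivial'' functionals $(\delta_0,\delta_0\circ\mathcal F)$ for $(\delta_{\sqrt{\varepsilon_0}},\delta_{\sqrt{\varepsilon_0}}\circ\mathcal F)$ via Corollary \ref{cor : pertframe}. But $\delta_0$ and $\delta_0\circ\mathcal F$ do not occur in the odd frame at all (they are identically $0$ on $\mathcal S_{\text{odd}}$, with no associated nonzero basis vector), so either there is nothing to swap, or the associated basis vector is $0$ and the swapped frame still omits the derivative information. The actual difficulty is that the odd frame contains the \emph{derivative} functional $\delta_0'+i\delta_0'\circ\mathcal F$, and the correct move -- which is what the paper does -- is to replace it by a difference quotient, using $\frac{\delta_{\varepsilon_0}-\delta_0}{\varepsilon_0}\to\delta_0'$ in $\mathcal S'$. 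Controlling the difference-quotient error in Corollary \ref{cor : pertframe} then requires a seminorm involving \emph{second} derivatives, $p_0(f)=\|f''\|_\infty+\|\widehat f''\|_\infty$, rather than the first-derivative seminorm you suggest, because the quantity $\bigl|\frac{f(\varepsilon_0)-f(0)}{\varepsilon_0}-f'(\varepsilon_0)\bigr|$ is $O(\varepsilon_0)\|f''\|_\infty$, not $O(\varepsilon_0)\|f'\|_\infty$. Finally, the obstacle you flag as the main potential one -- whether the odd basis functions obey \eqref{eq : RS2est}-type bounds -- is a non-issue: the required estimates on $c_n,\widehat c_n,c_n',\widehat c_n'$ are already stated in \cite[Eq.~(5.17)]{RS2}, and the paper cites them directly. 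It is the derivative term at the origin, not the decay of the $c_n$, that is the genuine structural difference from the even case.
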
 

\begin{proof}[Proof of Theorem \ref{thm : pertSch-odd}] We recall that in \cite{RV} the authors proved that there is a sequence of odd Schwartz functions $\{d_0^+\} \cup\{c_n\}_{n \ge 1}$ such that, for any $f \in \mathcal{S}_{\text{odd}}$, 
$$
f(x)=d_0^{+}(x) \frac{f^{\prime}(0)+i\widehat{f}^{\prime}(0)}{2}+\sum_{n=1}^{\infty} c_n(x) \frac{f(\sqrt{n})}{\sqrt{n}}-\sum_{n=1}^{\infty} \widehat{c_n}(x) \frac{\widehat{f}(\sqrt{n})}{\sqrt{n}},
$$
where the sum on the right-hand side converges absolutely for any $f \in \mathcal{S}_{\text{odd}}$. By using this formula in conjunction with Theorem \ref{thm : RV} and the canonical even-odd decomposition of any real function, we are able to write
\begin{align}
f(x) &= d_0^{+}(x) \frac{f^{\prime}(0)+i\widehat{f}^{\prime}(0)}{2} + a_0(f(0) + \widehat{f}(0)) + \frac{1}{2} \sum_{n\ge 1} f(\sqrt{n}) \left( \frac{c_n(x)}{\sqrt{n}} + a_n(x)\right) \cr 
    & + \frac{1}{2} \sum_{n \ge 1} f(-\sqrt{n}) \left( a_n(x) - \frac{c_n(x)}{\sqrt{n}}\right) + \frac{1}{2} \sum_{n \ge 1} \widehat{f}(\sqrt{n}) \left( \widehat{a_n}(x) - \frac{\widehat{c_n}(x)}{\sqrt{n}}\right)  \cr 
    & + \frac{1}{2} \sum_{n \ge 1} \widehat{f}(-\sqrt{n}) \left( \widehat{a_n}(x) + \frac{\widehat{c_n}(x)}{\sqrt{n}} \right). 
\end{align}

In analogy to the proof of Theorem \ref{thm : pertSch}, the first step is to suppose that the perturbations at hand satisfy $\varepsilon_0 = 0$. By doing that, a direct, careful redoing of the proof of Theorem \ref{thm : pertSch}, using, namely (see \cite[Eq.~(5.17)]{RS2}),
\begin{align*}
|c_n(x)|, |\widehat c_n(x)|&\lesssim n^{\frac34}\log^3(1+ n)\exp(-c'|x|/\sqrt n),\\
|c_n'(x)|, |{\widehat c}'_n(x)|&\lesssim n^{\frac54}\log^3(1+ n)\exp(-c'|x|/\sqrt n),
\end{align*}
yields that the following result holds: 

\begin{proposition}\label{prop : oddpert} Let $\{\varepsilon_n\}_{n \in \N}$ be a sequence such that $\varepsilon_0 = 0$ and $|\varepsilon_n| < c (1+n)^{-\frac{5}{4} - \delta}$. Then if $c$ is small enough, there exists a family of Schwartz functions $\{\tilde{f}_n\} \cup \{\tilde{g}_n\}$ such that for any $f \in \mathcal{S}_{\text{odd}}$, we have 
\[
f(x) = d_0^+ \frac{f'(0) + i \widehat{f}'(0)}{2} + \sum_{n \ge 1} \left( f(\sqrt{n+\varepsilon_n}) \tilde{f}_n(x) + \widehat{f}(\sqrt{n+\varepsilon_n})\tilde{g}_n(x) \right).
\]
Moreover, the series converges \emph{absolutely} in the Schwartz topology, and such a decomposition is \emph{unique}. 
\end{proposition} 

In order to be able to perturb the origin, besides the already-dealt-with case of $a_0(x) \cdot (f(0) + \widehat{f}(0))$ stemming from the even interpolation formula, another issue to be noticed is that the original interpolation formula for odd functions contains derivatives in $0$. More precisely, the dual perturbed formula given by Proposition \ref{prop : oddpert} contains the functional $\delta'_0+i\delta'_0\circ \mathcal F$. However, since
\begin{equation*}
\frac{\delta_\varepsilon-\delta_0}{\varepsilon}\to \delta'_0 \quad \text{ as } \varepsilon \to 0
\end{equation*}
in $\mathcal S'$, Corollary \ref{cor : pertframe} allows us to substitute those with a linear combination of standard Dirac deltas that are close 
enough to the origin: indeed, take $p_0(f) = \|f''\|_{\infty} + \|\widehat{f}''\|_{\infty}$, and note that, for $\{y_n\}$ given by the dual basis to the one induced by Proposition \ref{prop : oddpert}, and $\{\tilde{y}_n\}$ given by the \emph{same} basis, except for $n=0$, where one changes the element 
$$\frac{\delta_0' + i \delta_0' \circ \mathcal{F}}{2} \text{ by } \frac{1}{2} \left( \frac{\delta_{\varepsilon_0} - \delta_0}{\varepsilon_0} + i \frac{\delta_{\varepsilon_0} \circ \mathcal{F} - \delta_0 \circ \mathcal{F}}{\varepsilon_0} \right),$$ 
we have that (for an enumeration $\{x_n\}$ of the basis $\{\tilde{f}_n\} \cup \{\tilde{g}_n\}_n$ yielded by Proposition \ref{prop : oddpert})
\begin{align} \label{eq : bound-cor-der-sch}
\sum_{n \ge 0} |(y_n - \tilde{y}_n)(f)| p(x_n) & \le \left(\left| \frac{f(\varepsilon_0) - f(0)}{\varepsilon_0} - f'(\varepsilon_0)\right|  + \left| \frac{\widehat{f}(\varepsilon_0) - \widehat{f}(0)}{\varepsilon_0} - \widehat{f}(0)\right| \right)p(d_0^+) \cr 
 & \le C \varepsilon_0^2  \, p(d_0^+) p_0(f).
\end{align} 
In particular, for $p = p_0$, if $\varepsilon_0$ is small enough, then the constant on the right-hand side of \eqref{eq : bound-cor-der-sch} is smaller than $1$, allowing us to indeed employ Corollary \ref{cor : pertframe}, which then finishes the proof of Theorem \ref{thm : pertSch-odd}. 
\end{proof}

\end{remark}

\begin{remark}[Different perturbations in both space and frequency]\label{rmk : doublepert}
We also remark that, while the theorem is stated for one single perturbing sequence $\varepsilon_n$, a very similar argument also works
for when we apply two different perturbations, i.e.
\begin{equation*}
f\mapsto (f(\sqrt{n+\varepsilon_n}),\widehat f(\sqrt{n+\delta_n})).
\end{equation*}
This sampling will also induce a basis, provided one is careful when considering the issues arising with 
Poisson summation: namely, one obtains a basis only if one allows the interpolation formula to be written as
\begin{equation*}
    f(x)=(f(\varepsilon_0)+\widehat f(\delta_0))g_0(x)+\sum_{n=1}^\infty f(\sqrt{n+\varepsilon_n})g_n(x)+\sum_{n=1}^\infty \widehat f(\sqrt{n+\delta_n})h_n.
\end{equation*}
Trying to split the first term into two parts may break the uniqueness of 
decomposition due to non-classical Poisson formulas, in the same way as this happens in the Radchenko-Viazovska case (where it is clearer as $a_0=\widehat a_0$).
\end{remark}

\begin{remark}[Dependence on $s$]
As mentioned in the proof, to argue that we can take the limit $s\to \infty$ it would be enough to make $\varepsilon$ independent of $s$ or in other
words to find a bound $\mathcal I\lesssim |\varepsilon|$ which does not depend on $s$. While we can make the term
\begin{equation*}
\sum_{n=1}^\infty (1+n)^{-2s}n^{\frac32}\log^6(1+n)
\sum_{k=1}^{Cn} (1+k)^{2s} \frac{|\varepsilon_k|^2}{k}e^{-\frac nk}=:t
\end{equation*}
independent of $s$, the two bounds
\begin{align*}
\langle (I-T)(e_0),e_k\rangle&\le \sum (1+k)^{2s}|\varepsilon_k|^2 e^{-\frac{x+\varepsilon}{\sqrt \pi}}\\
\langle (I-T)(e_n),e_k\rangle&\le t+\sum_{n=1}^\infty (1+n)^{-2s}n^{\frac32}\log^6(1+n)\sum_{k=Cn}^\infty  (1+k)^{2s} \frac{|\varepsilon_k|^2}{k}
e^{-ck}
\end{align*}
are not, thus dooming this attempt. It does not seem unreasonable that a suitable renorming of $\ell^2_s$, combined with more efficient estimates, could make all the bounds independent of $s$, but it seems to be out of reach with current methods. 
\end{remark}

\begin{remark}[Schatten classes and Theorem \ref{thm : pertSch}] In order to prove Theorem \ref{thm : pertSch}, a crucial feature was the usage of the Hilbert-Schmidt norm of operators in order to deduce that, in particular, $I-T$ was compact in the right tower of Hilbert spaces. 

On the other hand, one readily notices that the Hilbert-Schmidt theory used was, as a matter of fact, not needed for our ultimate goal: indeed, we could instead in theory estimated a higher \emph{Schatten norm} 
$$\|I-T\|_{S_p}^p := \sum_{n\ge 0} |\lambda_n(I-T)|^p,$$
where $\{\lambda_k(S)\}_{k \ge 1}$ denotes the sequence of eigenvalues of the operator $S$ (where we omit the underlying Hilbert space where the operators are defined). The main issue with this strategy, however, is the lack of good estimates for the Schatten norm of general operators: indeed, to the best of our knowledge, the best known bound in general is given by the recent result of J. Delgado and M. Ruzhansky \cite{delgado2021schatten} (see also \cite{russo} for eariler related work). 

In particular, although in \cite[Corollary~5.4]{delgado2021schatten} sufficient conditions for an operator on spaces $\ell^2(\Z^n)$ to belong to the class $S_p$ are given, which when suitably modified for the weighted case give us sufficient conditions for $I-T$ to belong to the respective Schatten class, a careful inspection of that result shows that these are obtained \emph{in our particular case} as a direct interpolation between the Hilbert-Schmidt estimates given here, and the Schur test conditions employed in \cite{RS2} - in the same spirit as the work done in \cite{russo}. As such, it seems to us at the present moment that it is not possible to improve on the current range of estimates without improving on the Schur bound from \cite{RS2}. 
\end{remark}

\begin{remark}[A weaker version of Theorem \ref{thm : pertSch}]

We note that a more direct and elementary argument than the one used for the proof of Theorem \ref{thm : pertSch} is available, at the expense of a somewhat weaker conclusion. Nevertheless, since it helps contextualising {\bf Step 2} in the proof of Theorem \ref{thm : pertSch}, and since it is more striaghtforward than the previous one, we present it below: 

\begin{proposition}\label{prop : weakpert}
Let $F:\mathbb N\to \mathbb R$ be a function such that any polynomial $p=o(1/F)$. Then there exists
a constant $\delta$ depending on $F$ such that, for any sequence $(\varepsilon_n)$ such that $|\varepsilon_n|\le \delta|F(n)|$, there exists a family of Schwartz 
functions $\{f_n\}\cup \{g_n\}$ such that for any $f\in \mathcal S_{\text{even}}$
we have
\begin{equation*}
f(x)=\sum f(\sqrt{n+\varepsilon_n})f_n(x)+\widehat f(\sqrt{n+\varepsilon_n})g_n(x).
\end{equation*}
\end{proposition}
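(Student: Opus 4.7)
The plan is to apply Corollary \ref{cor : pertframe} directly to the Radchenko--Viazovska Schauder basis on $\mathcal{S}_{\text{even}}$, viewing the $x_n$ as an enumeration of $\{a_n\}_{n \in \N} \cup \{\widehat{a}_n\}_{n \ge 1}$ and the dual functionals $y_n$ as the corresponding evaluations $\delta_{\sqrt{n}}$ and $\delta_{\sqrt{n}} \circ \mathcal F$. The perturbed functionals $\tilde y_n$ are the evaluations at $\sqrt{n+\varepsilon_n}$. In parallel with the two-step structure of the proof of Theorem \ref{thm : pertSch}, I first assume $\varepsilon_0 = 0$; the case $\varepsilon_0 \ne 0$ then follows from a second, independent application of Corollary \ref{cor : pertframe} to absorb the perturbation at the single node at the origin exactly as in \textbf{Step 2} there.

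The core estimate rests on a single seminorm $p_0(f) := \|f'\|_\infty + \|\widehat f'\|_\infty$. For any continuous seminorm $p$ on $\mathcal{S}_{\text{even}}$, Corollary \ref{cor : seminorms} produces a polynomial bound $p(a_n) + p(\widehat{a}_n) \lesssim_p n^{k(p)}$ for some $k(p) \in \N$. Combined with the mean value inequality
\[
|f(\sqrt{n}) - f(\sqrt{n+\varepsilon_n})| \le \frac{|\varepsilon_n|}{2\sqrt{n}} \|f'\|_\infty, \qquad n \ge 1,
\]
and the analogous bound for $\widehat f$, the series appearing in Corollary \ref{cor : pertframe} becomes
\[
\sum_{n \ge 1} |(y_n - \tilde y_n)(f)|\, p(x_n) \lesssim_p \delta \cdot p_0(f) \cdot \sum_{n \ge 1} |F(n)|\, n^{k(p) - 1/2}.
\]
The last series is finite because the hypothesis on $F$ means that $|F(n)| \cdot n^{k}$ is summable for any fixed $k > 0$. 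This already shows that $C_p < +\infty$ for every continuous seminorm $p$, which is the first half of the hypothesis of Corollary \ref{cor : pertframe}.

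For the specific seminorm $p_0$, Corollary \ref{cor : seminorms} gives $p_0(a_n) + p_0(\widehat{a}_n) \lesssim n^{5/4} \log^3(1+n)$, and hence
\[
C_{p_0} \lesssim \delta \cdot M(F), \qquad M(F) := \sum_{n \ge 1} |F(n)|\, n^{3/4}\log^3(1+n),
\]
with $M(F) < +\infty$ again thanks to the rapid decay of $F$. Choosing $\delta < 1/M(F)$ forces $C_{p_0} < 1$, so Corollary \ref{cor : pertframe} directly produces a perturbed Schauder frame, which in the nuclear Fr\'echet space $\mathcal S_{\text{even}}$ may be upgraded to an actual basis. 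The main (and essentially only) obstacle to overcome is ensuring that one fixed seminorm $p_0$ simultaneously controls all the $C_p$'s, and this is precisely where the super-polynomial decay of $F$ is decisive; by contrast, the decay rate $(1+n)^{-5/4-\delta}$ in Theorem \ref{thm : pertSch} is too weak for this naive strategy to succeed, which is exactly why the $sc$-Fredholm machinery is needed there.
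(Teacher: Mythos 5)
Your proposal is correct and follows essentially the same route as the paper's own proof: both apply Corollary \ref{cor : pertframe} to the Radchenko--Viazovska basis with the fixed seminorm $p_0(f) = \|f'\|_\infty + \|\widehat f'\|_\infty$, invoke the mean-value bound $|f(\sqrt n)-f(\sqrt{n+\varepsilon_n})| \lesssim |\varepsilon_n|/\sqrt n$, and use Corollary \ref{cor : seminorms} to control $p(a_n)$ polynomially so that the super-polynomial decay of $F$ makes every $C_p$ finite and $C_{p_0}<1$ for small $\delta$. The only cosmetic difference is that the paper absorbs the origin perturbation $\delta_0 + \delta_0\circ\mathcal{F}\mapsto \delta_{\varepsilon_0}+\delta_{\varepsilon_0}\circ\mathcal{F}$ in a single pass whereas you defer it to a second application of Corollary \ref{cor : pertframe}; either packaging works.
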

\begin{proof}
Since $\mathcal S$ is Montel and nuclear, it follows from the results of Section $2$ that 

$$A:=\{\delta_0+\delta_0\circ \mathcal F, \delta_1, \delta_1\circ \mathcal F,\dots\} = \{\alpha_n\}_{n \in \N}$$
is a Schauder basis for $\mathcal S'$. Let $T$ be the (a priori only densely defined) operator that sends
$\delta_{\sqrt n} \to \delta_{\sqrt{n+\varepsilon_n}}$, $\delta_{\sqrt n}\circ \mathcal F\mapsto \delta_{\sqrt{n+\varepsilon}}\circ \mathcal F$
and $\delta_{0}+\delta_0\circ \mathcal F\mapsto \delta_{\varepsilon_0}+\delta_{\varepsilon_0}\circ \mathcal F $. We claim that this operator
is continuous (hence it can be continuously extended to $\mathcal S'$) and invertible. To prove both, we shall employ a strategy similar to that of {\bf Step 2} in the proof of Theorem \ref{thm : pertSch}: it suffices to prove that there
exists $p_0$ a continuous seminorm on $\mathcal S$ such that for all other seminorms $p$ we have

\begin{equation}\label{eq: seminorms}
\sum_{n \in \N} p_0^*(\alpha_n-\beta_n) p(x_n)< C_p<\infty
\end{equation}
and that $C_{p_0}<1$, where $\{\alpha_n\}_{n \in \N}$ is the enumeration of $A$ above, $x_n$ is the induced enumeration of the Radchenko-Viazovska basis, and
$\beta_n=T(\alpha_n)$. 

To prove this, let $p_0(f):=\|f'\|_\infty+\|(\widehat f)'\|_\infty$. The series on the left-hand side of \eqref{eq: seminorms}  can then be bounded by
\begin{equation*}
S=|\varepsilon_0|+\sum_{n \neq 0} \frac{|\varepsilon_n|}{\sqrt n}p(a_n)+\sum_{n \neq 0} \frac{|\varepsilon_n|}{\sqrt n}p(\widehat a_n)=:I+II+III.
\end{equation*}
Indeed, if for instance $n$ is odd, 
\begin{align*}
p_0^*(\alpha_n - \beta_n) = \sup_{f \colon p_0(f) \le 1} |(\alpha_n - \beta_n)(f)| = \sup_{f \in \mathcal{S}\colon p_0(f) \le 1} |f(\sqrt{m}) - f(\sqrt{m+\varepsilon_m})| \le C \frac{|\varepsilon_m|}{\sqrt{m}},
\end{align*}
where $2m+1 = n.$ A similar argument works for $n$ even, but using the Fourier transform of $f$ instead. 

Clearly, it suffices now to prove the result for $p$ of the form $p_{\alpha,\beta}$, since these generate all seminorms of $\mathcal{S}$. Recall that, by Corollary \ref{cor : seminorms}, we have
\begin{equation*}
p_{\alpha,\beta}(a_n)\lesssim n^{\frac{\alpha+\beta}2}n^{\frac34}\log^3(1+n).
\end{equation*}
Thus, applying this latter inequality, it follows that
\begin{equation*}
II,III\lesssim_{\alpha,\beta} \sum_{n \ge 0} |\varepsilon_n| n^{\frac{\alpha+\beta}2}n^{\frac14}\log^3(1+n)\le C \cdot \delta \cdot K(F,\alpha,\beta).
\end{equation*}
Here, we used the rapid decay of $|\varepsilon_n|$ in order to offset the possible (at most polynomial) growth induced by the other factors. Choosing $\delta$ so that, for some $\alpha,\beta$ such that $p_0\le p_{\alpha,\beta}$ we have $CK<1$, this implies by  that $T$ is indeed an isomorphism, hence
$T(A)$ is a Schauder basis.
\end{proof}

It is also possible to prove this last result without resorting to Corollary \ref{cor : seminorms}: it suffices to notice that our estimates imply that $I-T$
maps an open set into a bounded set and by the fact that $\mathcal S$ is a Montel space, it follows that $I-T$ is compact, since $\mathcal S^*$ will be 
Montel as well. This implies
Fredholmness of $T$ and that $\text{ind}(T)=0$. Choosing $K$ so that $CK<1$ we have that $T$ is injective, hence an isomorphism.

\end{remark}

\section{Frames and Schwartz convergence of supercritical interpolation formulas}\label{sec : schw-int}
We recall once more that, in \cite{KNS}, the authors ask whether there exists a version of their interpolation formula that is valid in the topology of $\mathcal S$.
In this section, we first reframe the problem in one that is purely functional analytic and we then solve that problem for a particular 
class of sequences, whose prototypical case is that of small powers of integers. We will also see that, conditional on a conjecture of the second author and M. Sousa, the result can be significantly strengthened.

Finally, as an application, we show that a recent result of the second author with M. Sousa on \emph{discrete} Gaussian decay implying \emph{global analyticity} \cite{RS3} can be extended to the endpoint case under some technical conditions.

 We now set the groundwork for the next few results. We say that a pair of sequences $(\{x_n\}_n,\{y_n\}_n)$ is \emph{$(p,q)-$supercritical} if they satisfy 
\begin{equation}\label{eq : p-q-sep}
\max\left( \limsup_n |x_{n+1}|^{p-1} |x_{n+1} - x_n|, \limsup_n |y_{n+1}|^{q-1} |y_{n+1} - y_n| \right) < \frac{1}{2},
\end{equation} 
with $\frac{1}{p} + \frac{1}{q} = 1.$ Furthermore, we say that a sequence $\{x_n\}_{n\in\N}$ is $p-$\emph{separated} if there is $c>0$ such that 
\[
x_{n+1} - x_n \ge \frac{c}{\left(1+\min(|x_n|,|x_{n+1}|)\right)^{p-1}}.
\]
For a pair of $(p,q)$-supercritical sequences $(\{x_n\}_n,\{y_n\}_n)$, we say it is $(p,q)-$\emph{separated} if $\{x_n\}_n$ is $p-$separated and $\{y_n\}_n$ is $q-$separated. 

Using now the framework built in Section \ref{sec : prelim}, we know that the existence of frame expansion functions 
\footnote{the two conditions are equivalent only if the frame expansion functions are supposed to be particularly well behaved, i.e. if they are
supposed to form, say, a Bessel sequence, see the other equivalent conditions in \cite[Proposition~2.8]{Bonet2}}
is equivalent to the complementedness of the range of the analysis operator. In what follows, we prove exactly that this \emph{always} happens in case \eqref{eq : p-q-sep} holds, and hence the respective functionals always form a frame in that case. 

\begin{proposition}\label{prop : prelim-frame-KNS}
Given a pair $(\{x_n\}_n,\{y_n\}_n)$ of $(p,q)$-supercritical sequences, the functionals $(\delta_{x_n},\delta_{y_n}\circ \mathcal F)$ form a 
$\mathfrak s$-frame for $\mathcal S$.
\end{proposition}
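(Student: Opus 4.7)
The plan is to verify directly that the analysis operator $T\colon\mathcal S\to\mathbb{C}^{\N}\oplus\mathbb{C}^{\N}$ defined by $Tf := ((f(x_n))_n,(\widehat f(y_n))_n)$ is a topological embedding into $\mathfrak s\oplus\mathfrak s,$ which by the definition recalled in Section \ref{sec : prelim} is exactly the $\mathfrak s$-frame condition. First, I would observe that the supercriticality inequality \eqref{eq : p-q-sep} forces, by telescoping the separation bound, the lower growth $|x_n|\gtrsim n^{1/p}$ and $|y_n|\gtrsim n^{1/q}$ for large $|n|.$ Combined with the rapid decay of $f$ and $\widehat f,$ this both places $Tf$ in $\mathfrak s\oplus\mathfrak s$ and yields continuity of $T,$ since every defining seminorm of $\mathfrak s$ of the form $\sup_n n^k|f(x_n)|$ is controlled by a single Schwartz seminorm of $f.$ Injectivity of $T$ on $\mathcal S$ is then immediate from the Fourier uniqueness statement in the supercritical regime of \cite{KNS}.

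The heart of the proof is the continuity of $T^{-1}$ on its image. Here I would exploit the scale structure from Section \ref{ssec : weights}: $\mathcal S$ is the projective limit of the Sobolev-type spaces $\mathcal H_{p,q}^s,$ and $\mathfrak s$ is the projective limit of the weighted sequence spaces $\ell^2_s.$ The strong $\mathcal H$-interpolation conclusion of \cite{KNS} provides, at each sufficiently large level $s,$ a bound of the shape $\|f\|_{\mathcal H_{p,q}^s}\lesssim_s\|Tf\|_{\ell^2_s\oplus\ell^2_s};$ together with the regularising property of $T$ recorded in the remark following Proposition \ref{thm : Fredholm} (itself ultimately from \cite[Theorem~2]{KNS}), this means that $Tf\in\mathfrak s\oplus\mathfrak s$ already forces $f\in\bigcap_s\mathcal H_{p,q}^s=\mathcal S,$ with each Schwartz seminorm of $f$ controlled by finitely many $\mathfrak s$-seminorms of $Tf.$ That is exactly the embedding property.

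The main obstacle I anticipate is the verification of the per-scale estimates: one must ensure that the $\mathcal H$-embedding bound from \cite{KNS} genuinely propagates along the entire Sobolev tower and is not confined to the distinguished space $\mathcal H$ highlighted in that work. This is precisely the kind of input the sc-operator language of Section \ref{sec : prelim} is designed to package — per-scale bounded-below estimates together with a regularising property, combined into a single projective-limit statement — after which the desired $\mathfrak s$-frame property follows as a routine consequence.
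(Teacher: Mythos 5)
Your overall strategy coincides with the paper's: pass the analysis operator through the per-scale two-sided estimate of \cite[Theorem~2]{KNS} (quoted in the paper as Theorem~\ref{thm : KNS3}) and then take projective limits. However, two points in the write-up deserve correction.

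First, your claim that the supercriticality inequality \eqref{eq : p-q-sep} forces, ``by telescoping the separation bound,'' the lower growth $|x_n|\gtrsim n^{1/p}$ is backwards. Supercriticality is an \emph{upper} bound on the gaps, which makes the sequence \emph{denser}; telescoping it yields $|x_n|\lesssim n^{1/p}$, the \emph{opposite} inequality. The lower growth bound you actually need --- needed both so that $T(\mathcal S)\subset\mathfrak s\oplus\mathfrak s$ and so that $\lim_{\infty\leftarrow s}\ell^2_{s,\{x_n\}}=\mathfrak s$ --- is a consequence of the $(p,q)$-\emph{separation} hypothesis (the lower gap bound), which is a distinct assumption and is in fact a stated hypothesis of \cite[Theorem~2]{KNS} as used by the paper. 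Without it, $x_n=\log n$ is $(p,q)$-supercritical for any $p$, yet for a Schwartz $f$ with $|f(x)|\sim e^{-|x|}$ one gets $|f(x_n)|\sim 1/n$, so $Tf\notin\mathfrak s$. Second, the appeal to the regularising property is a red herring: $T$ is applied only to $f\in\mathcal S$, for which $f\in\bigcap_s\mathcal H^s_{p,q}$ is automatic, so there is nothing to regularise. The relevant content from \cite{KNS} is simply the two-sided comparison between $\|f\|_{\mathcal H^s_{p,q}}$ and the weighted $\ell^2$ norms of the samples. For the bicontinuity step, the paper proceeds somewhat differently from you: it proves that $T(\mathcal S)$ is \emph{closed} in $\mathfrak s\oplus\mathfrak s$ by establishing $T(\mathcal S)=\bigcap_s T(\mathcal H^s_{p,q})$ (the nontrivial inclusion uses Fourier uniqueness to glue the functions at the different levels) together with the closedness of each $T(\mathcal H^s_{p,q})$, after which the embedding follows from the open mapping theorem for Fr\'echet spaces. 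Your direct reading of the lower KNS bound --- each Schwartz seminorm of $f$ controlled by an $\mathfrak s$-seminorm of $Tf$ --- is a valid and arguably more transparent way to reach the embedding, with closedness then automatic since $\mathcal S$ is complete.
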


\begin{proof}
Let $T:L^1\cap C^0\to \mathbb C^{\mathbb Z} \oplus \mathbb C^{\mathbb Z}$ be the map defined as
\begin{equation*}
T(f)=((f(x_n))_n, (\widehat f(y_n))_n).
\end{equation*}
We now recall \cite[Theorem~2]{KNS}: 

\begin{theoremA}\label{thm : KNS3} Let $(\{x_n\}_n,\{y_n\}_n)$ be a $(p,q)-$separated supercritical pair with parameters $p, q>1$, $\frac{1}{p}+\frac{1}{q}=1$, and let $s>0$ be such that $s \min (p, q) \geqslant 1$. Then there exist numbers $c, C>0$, such that, for every $f \in \mathcal{S}$,
$$
\begin{aligned}
& c\|f\|_{\mathcal{H}_{ p, q}^s}^2 \leqslant\left[\sum_{n \in \N }(1+|x_n|)^{(2 s-1) p+1}|f(x_n)|^2\right. \\
&\left.+\sum_{n \in \N}(1+|y_n|)^{(2 s-1) q+1}|\widehat{f}(y_n)|^2\right] \leqslant C\|f\|_{\mathcal{H}_{ p, q}^s}^2.
\end{aligned}
$$
\end{theoremA}
By Theorem \ref{thm : KNS3}, it follows then that 
$$T:\mathcal H^{s}_{p,q}\to \ell^2_{(2s-1)p +1,\{x_n\}}\oplus  \ell^2_{(2s-1)q + 1,\{y_n\}}$$
is a continuous embedding, where we define for a sequence $\{z_n\}$ and $r>0$ the space 
\[
\ell^2_{r, \{z_n\}} \coloneqq  \left\{ \{\by_n\}_n \colon \, \|\by_n\|_{\ell^2_{r,\{z_n\}}}^2 =\sum_{n \in \N} (1+ |z_n|)^r |\by_n|^2 < +\infty\right\}. 
\]
Since one may prove that 
\begin{equation*}
	\mathfrak s=\underset{\infty \leftarrow s}{\lim} \ell^2_{s,\{x_n\}}=\underset{\infty \leftarrow s}{\lim} \ell^2_{s,\{y_n\}}
\end{equation*}
and 
$\mathcal S=\lim \mathcal H^{s}_{p,q}$, it follows that $T$ is continuous from $\mathcal S$ to $\mathfrak s\oplus \mathfrak s$. Clearly, it is also injective, leaving only to prove
that $T(\mathcal S)$ is closed. In order to do this, we first claim that 
\begin{equation}\label{eq : T-eq}
	T(\mathcal S)=\bigcap_{s>0} T (\mathcal H^s_{p,q}). 
\end{equation}
Indeed, the inclusion $T(\mathcal{S}) \subset \bigcap_{s>0} T (\mathcal H^s_{p,q})$ follows at once by the fact that $\mathcal{S} = \bigcap_{s>0} \mathcal H^s_{p,q}.$ For the reverse inclusion, we use Theorem \ref{thm : KNS3} again: if a sequence $(\bx_n,\by_n) \in T(\mathcal{H}_{p,q}^s)$, then for each $s>0$ there is $f_s \in \mathcal{H}_{p,q}^s$ such that $f_s(x_n) = \bx_n, f_s(y_n) = \by_n.$ As $f_{s_1} - f_{s_2}$ vanishes at $\{x_n\}_n,$ and the Fourier transform of that function vanishes at $\{y_n\}_n$, it follows from the fact that $(\{x_n\}_n,\{y_n\}_n)$ is $(p,q)-$supercritical that $f_{s_1} \equiv f_{s_2},$ as long as $s_1,s_2$ are sufficiently large, such as for instance $s_1,s_2 > 2.$ Hence, there is $f \in \mathcal{S}$ with $f(x_n) = \bx_n, \widehat{f}(y_n) = \by_n$, as desired. 

Now, in order to conclude the proof, it is enough to note that $T(S)$ is \emph{closed} - to prove this, let $(\bx_n,\by_n)$
    be a a sequence in $T(\mathcal{S})$ whose limit
    $(\bx,\by)$ belongs to $\mathfrak s\oplus \mathfrak s$. It follows that $\{(\bx_n,\by_n)\}\subset T(\mathcal H^s_{p,q})$ for any $s>0$, and
    since those sets are closed for any fixed $s > 0$, $(\bx,\by)\in T(\mathcal H^s_{p,q}), \, \forall \, s > 0,$ and thus by the argument from above, so $(\bx,\by)\in T(\mathcal{S})$, as desired. 
\end{proof}

The following result then provides a functional-analytic characterization of Schwartz convergence of interpolation formulas:

\begin{corollary}\label{cor : complstr}
Let $\{x_n\}_n, \{y_n\}_n$ be a pair of $(p,q)$-supercritical sequences. Then there exist two families of functions $\{h_n\}_n, \{g_n\}_n$ such that
\begin{equation*}
f(x)=\sum f(x_n)h_n(x)+\sum \widehat f(y_n)g_n(x)
\end{equation*}
in $\mathcal S$ and such that the sum
\begin{equation*}
\sum \left(\bx_k h_k(x)+\by_k g_k(x)\right)
\end{equation*}
converges absolutely in $\mathcal S$ for each $(\bx_k, \by_k)$ in $\mathfrak s\oplus \mathfrak s$ if and only if the set
\begin{equation*}
\{(f(x_n),\widehat f(y_n)): f\in\mathcal S\}
\end{equation*}
is complemented in $\mathfrak s\oplus \mathfrak s$.
\end{corollary}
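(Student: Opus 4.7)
The plan is to read the corollary as a direct application of the two main frame-theoretic results already established in Section \ref{sec : prelim}: Proposition \ref{prop : strfr} and Proposition \ref{prop : prelim-frame-KNS}. Specifically, I would set $\Lambda := \mathfrak{s}\oplus \mathfrak{s}$, take $X=\mathcal S$, let $T(f):=((f(x_n))_n,(\widehat f(y_n))_n)$ be the analysis operator, and let $\{z_n\}_n$ denote the enumeration of the system of functionals $(\delta_{x_n},\delta_{y_n}\circ\mathcal F)_n$. Proposition \ref{prop : prelim-frame-KNS} gives precisely that $\{z_n\}_n$ is a $\Lambda$-frame for $\mathcal S$ (the key point being that $T(\mathcal S)$ is closed in $\Lambda$), and by definition this is a \emph{strong} $\Lambda$-frame if and only if $T(\mathcal S)$ is complemented in $\mathfrak{s}\oplus\mathfrak{s}$. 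Since $\mathfrak s\oplus \mathfrak s$ is a Fréchet sequence space in which the canonical vectors form a Schauder basis (a standard fact about the Köthe space of rapidly decreasing sequences), all hypotheses of Proposition \ref{prop : strfr} are in place.

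For the ``$\Leftarrow$'' direction, I would apply Proposition \ref{prop : strfr} directly: if $T(\mathcal S)$ is complemented, then there exists a sequence of vectors in $\mathcal S$, which I relabel as $\{h_n\}\cup\{g_n\}$, for which $((h_n,g_n),z_n)_n$ is a Schauder frame and, crucially, the synthesis sum $\sum (\bx_n h_n+\by_n g_n)$ converges in $\mathcal S$ for \emph{every} $(\bx,\by)\in\mathfrak s\oplus \mathfrak s$. Specializing to $(\bx,\by)=T(f)$ yields the interpolation formula. For the ``$\Rightarrow$'' direction, one uses the converse half of Proposition \ref{prop : strfr}: the hypothesized synthesis map $S:\Lambda\to \mathcal S$, $(\bx,\by)\mapsto \sum \bx_n h_n+\by_n g_n$, is pointwise defined, hence continuous by the uniform boundedness principle (as in the proof of Proposition \ref{prop : strfr}); then $P:=T\circ S$ is a continuous projection of $\Lambda$ onto $T(\mathcal S)$, establishing complementedness.

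To upgrade the convergence to \emph{absolute} convergence in the Schwartz topology (as demanded by the corollary statement), I would invoke nuclearity. The canonical sequence $(e_n)$ is an absolute Schauder basis of the nuclear Fréchet space $\mathfrak s\oplus\mathfrak s$, so every expansion $\sum c_n e_n$ with $c\in\Lambda$ converges absolutely there. Since the synthesis constructed inside Proposition \ref{prop : strfr} is explicitly $\sum c_n x_n = T^{-1}\circ P(\sum c_n e_n)$, continuity of $T^{-1}\circ P$ transfers absolute convergence in $\Lambda$ to absolute convergence in $\mathcal S$, as needed.

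There is no essential obstacle to overcome, beyond the bookkeeping required to match the data of the corollary to the abstract setup of Proposition \ref{prop : strfr}. The one genuinely load-bearing input is that the image $T(\mathcal S)$ is closed in $\mathfrak{s}\oplus\mathfrak{s}$, i.e.\ that the family $\{z_n\}_n$ is really an \emph{embedding}, and not merely a continuous injection; this is exactly what is established through the identification $T(\mathcal S)=\bigcap_{s>0}T(\mathcal H^s_{p,q})$ in the proof of Proposition \ref{prop : prelim-frame-KNS}, itself resting on Theorem \ref{thm : KNS3}. Once this is accepted, the corollary is a formal consequence of the frame-theoretic machinery.
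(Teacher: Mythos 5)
Your proposal follows exactly the paper's route: identify $\Lambda = \mathfrak{s}\oplus\mathfrak{s}$, invoke Proposition~\ref{prop : prelim-frame-KNS} to know that the functionals $(\delta_{x_n},\delta_{y_n}\circ\mathcal F)$ form a $\Lambda$-frame, apply Proposition~\ref{prop : strfr} in both directions, and unwind the definition of a strong $\Lambda$-frame as complementedness of $T(\mathcal S)$. The one place where you add something the paper's proof leaves implicit is the upgrade from ``converges'' (which is all Proposition~\ref{prop : strfr} delivers verbatim) to ``converges absolutely'' (as the corollary states); your nuclearity argument, noting that $(e_n)$ is an absolute basis of $\mathfrak s\oplus\mathfrak s$ and that the partial sums satisfy $\sum_{n\le N}c_n x_n = T^{-1}\circ P\bigl(\sum_{n\le N}c_n e_n\bigr)$ so that continuity pushes $\sum p(c_n e_n)<\infty$ to $\sum q(c_n x_n)<\infty$, is the right way to close that small gap. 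Otherwise this is the same proof.
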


\begin{proof}[Proof of Corollary \ref{cor : complstr}] By Proposition  \ref{prop : strfr}, it follows that there exists a pair of sequences $(\{h_n\}_n, \{g_n\}_n)$ such that 
\[
\left( (\{h_n\}_n,\{g_n\}_n), (\delta_{x_n},\delta_{y_n} \circ \mathcal{F}) \right)
\]
is a Schauder frame, and such that the sum
\begin{equation*}
\sum \left(\bx_k h_k(x)+\by_k g_k(x)\right)
\end{equation*}
converges in $\mathcal S$ for each $(\bx_k, \by_k)$ in $\mathfrak s\oplus \mathfrak s$ if, and only if, the pair $(\delta_{x_n},\delta_{y_n} \circ \mathcal{F})$ is a $\mathfrak{s} \oplus \mathfrak{s}$-strong frame. By definition, this means that the image of $\mathcal{S}$ under the action of the sequence $(\delta_{x_n},\delta_{y_n} \circ \mathcal{F})$ is complemented. Since this latter image equals \emph{exactly} 
$\{(f(x_n),\widehat f(y_n)): f\in\mathcal S\}$, this finishes the proof.
\end{proof}

\begin{proof}[Proof of Theorem \ref{thm : KNSsoleven}] Since we have proved in Proposition \ref{prop : prelim-frame-KNS} that, whenever the sequences $\{x_n\}_n,\{y_n\}_n$ satisfy \eqref{eq : p-q-sep}, the functionals $(\delta_{x_n},\delta_{y_n} \circ \mathcal{F})$ form a $\mathfrak{s}-$frame for $\mathcal{S},$ by Corollary \ref{cor : complstr} we only need to prove that the frame in question is a strong $\mathfrak{s}-$frame. 

While proving that the range of the analysis operator is complemented is out of reach for general separated sequences, we are able to prove it for power sequences of small exponents: the technique will be to prove that we can select 
subsequences $x_{n_k}, y_{n_k}$ which induce a strong subframe, which in turn is achieved by employing the results in \cite[Section~5]{RS2}. 

\begin{proposition}\label{prop : preKNSsol}
Let $\{x_n\}_n, \{y_n\}_n$ satisfy the hypotheses of Theorem \ref{thm : KNSsoleven}. Then there exist a subsequence $x_{n_k}$, such that the functionals $(\delta_{x_{n_k}}, \delta_{y_{n_k}}\circ \mathcal 
F)$ form a strong $\mathfrak s-$frame for $\mathcal S_{\text{\emph{even}}}$.
\end{proposition}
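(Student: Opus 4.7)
The plan is to reduce to Theorem \ref{thm : pertSch}. I would extract subsequences $\{x_{n_k}\}_k, \{y_{n_k}\}_k$ that are small perturbations of the Radchenko--Viazovska nodes $\{\sqrt{k}\}_k$, apply the double-perturbation variant of Theorem \ref{thm : pertSch} described in Remark \ref{rmk : doublepert} to produce a Schauder basis of $\mathcal{S}_{\text{even}}$ dual to $(\delta_{x_{n_k}}, \delta_{y_{n_k}} \circ \mathcal{F})_k$, and finally invoke Corollary \ref{cor : subframe} to lift this strong subframe into a strong $\mathfrak{s}$-frame along the full sequence.

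The main ingredient is the selection of the subsequence. The hypothesis \eqref{eq : bound-x-y-stronger} gives $|x_{n+1} - x_n| \le c|x_{n+1}|^{-D}$ for all $n$ large, and similarly for $\{y_n\}_n$, so for each $k \ge k_0$ one can find indices (which we label by $n_k$ for both, reindexing the $y_n$'s if necessary) with
\[
|x_{n_k} - \sqrt{k}|, \, |y_{n_k} - \sqrt{k}| \lesssim k^{-D/2}.
\]
Writing $x_{n_k} = \sqrt{k + \varepsilon_k}$ and $y_{n_k} = \sqrt{k + \delta_k}$, the factorization $\varepsilon_k = (x_{n_k} - \sqrt{k})(x_{n_k} + \sqrt{k})$ yields
\[
|\varepsilon_k|, \, |\delta_k| \lesssim k^{-\frac{D-1}{2}}.
\]
Since $D > 7/2$ implies $(D-1)/2 > 5/4$, these perturbations satisfy the hypothesis of Theorem \ref{thm : pertSch} with some $\delta > 0$, and the implicit constant can be made arbitrarily small by choosing $c$ in \eqref{eq : bound-x-y-stronger} small enough.

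With these perturbation sequences in hand, Theorem \ref{thm : pertSch} together with Remark \ref{rmk : doublepert} produces functions $\{h_k\}_k \cup \{g_k\}_k$ which form a Schauder basis of $\mathcal{S}_{\text{even}}$ dual to $(\delta_{x_{n_k}}, \delta_{y_{n_k}} \circ \mathcal{F})_k$. By Proposition \ref{prop : strfr}, this is a strong $\mathfrak{s}$-frame for $\mathcal{S}_{\text{even}}$ at the subsampled nodes, and then Corollary \ref{cor : subframe} promotes this to a strong $\mathfrak{s}$-frame along the full sequences, as required.

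The main obstacle, and the source of the threshold $D > 7/2$, is the decay rate required by Theorem \ref{thm : pertSch}: that result demands perturbation bounds strictly better than $k^{-5/4}$, which combined with the identity $|\varepsilon_k| \asymp \sqrt{k} \cdot |x_{n_k} - \sqrt{k}|$ forces the spacing of the extracted subsequence to decay as $o(k^{-7/4})$, i.e., $D > 7/2$. A secondary technical point is the handling of the origin, where the Radchenko--Viazovska basis uses the combined functional $\delta_0 + \delta_0 \circ \mathcal{F}$: since the selected $x_{n_0}, y_{n_0}$ need not vanish, this is absorbed through the Neumann-series perturbation argument of Corollary \ref{cor : pertframe}, as in Step 2 of the proof of Theorem \ref{thm : pertSch}, by taking the indices whose values lie closest to the origin.
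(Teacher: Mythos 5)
Your extraction of a subsequence near $\sqrt{\mathbb N}$, the computation $|\varepsilon_k| \lesssim k^{-(D-1)/2}$, and the identification of the threshold $D > 7/2$ all match the paper's argument. The reduction to Theorem \ref{thm : pertSch} via Remark \ref{rmk : doublepert}, and the final appeal to Proposition \ref{prop : strfr} and Corollary \ref{cor : subframe}, are also the right structural moves. But there is a gap in the middle.

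The hypothesis \eqref{eq : bound-x-y-stronger} is a $\limsup$ condition, so the bound $|\varepsilon_k|, |\delta_k| \lesssim k^{-(D-1)/2}$ holds only for $k \ge k_0$. For the finitely many indices $1 \le k < k_0$, the closest nodes $x_{n_k}, y_{n_k}$ need not be close to $\sqrt{k}$ at all, and Theorem \ref{thm : pertSch} requires the perturbation bound $|\varepsilon_n| < c(1+n)^{-5/4-\delta}$ for \emph{every} $n$ with $c$ small. Your attempt to patch this via Corollary \ref{cor : pertframe} (``taking the indices whose values lie closest to the origin'') addresses only the node at $0$; for $1 \le k < k_0$ the Neumann-series argument needs the seminorm bound $C_{p_0} < 1$, which one has no hope of securing when $x_{n_k}$ is far from $\sqrt{k}$, since the ``perturbation'' is not small in any seminorm. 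Shrinking the constant $c$ in \eqref{eq : bound-x-y-stronger} does not help here either, because that only controls the tail, not the head.

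The paper's proof circumvents precisely this by \emph{not} reducing to Theorem \ref{thm : pertSch} directly. Instead, it factors the analysis operator through the Radchenko--Viazovska analysis operator as $\mathcal R = \mathcal T \circ \tilde{\mathcal R}$ and builds the sequence-space operator $\mathcal T_\alpha$ with two distinct parts: for $n > N$ it uses the good perturbed nodes $x_{m_1^\pm(n)}$, while for the first $2N$ slots it inserts evaluations at essentially arbitrary members $x_{i_1},\dots,x_{i_{2N}}$, $y_{j_1},\dots,y_{j_{2N}}$ of the given sequences. It then observes that replacing the head with the identity gives an operator $T_N$ which is an sc-Fredholm finite-rank perturbation of a truncation, and that $T_N - \mathcal T_\alpha$ is $sc^+$, so $\mathcal T_\alpha$ is sc-Fredholm; this is exactly how the finitely many uncontrolled indices are absorbed. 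Injectivity is then a separate, nontrivial step, established by a perturbative comparison with an operator $\widehat T$ sampling at points in $\sqrt{\N + \tfrac12}$ together with \cite[Lemma~5.4]{RS2} --- a step your argument also glosses over (it comes for free in Theorem \ref{thm : pertSch} only because that theorem already gives an isomorphism, which you cannot invoke here). So the right idea is present, but the Fredholm bookkeeping for the head of the sequence and the explicit injectivity argument are missing, and these are not cosmetic --- they are what the sc-Fredholm framework is doing for the paper.
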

\begin{proof}
Thanks to the isomorphism $\mathfrak s\oplus \mathfrak s \simeq \mathfrak s$, it suffices to prove that the map 
\begin{align*}
\mathcal R&:\mathcal S\to \mathfrak s\oplus \mathfrak s;\cr 
f&\mapsto ((f(x_k))_k,(\widehat f(y_k))_k)
\end{align*} is injective and that it has complemented range. Indeed, we prove that the range has finite codimension and is closed.
To prove this, recall that the operator $\tilde{\mathcal R}$ defined via
\begin{align*}
\tilde {\mathcal R}& : \mathcal S\to \C \oplus \C \oplus \mathfrak s\oplus \mathfrak s; \cr 
f & \mapsto (f'(0) + i \widehat{f}'(0),f(0) + \widehat{f}(0),(f(\pm\sqrt n)_n), (\widehat f(\pm\sqrt n)_n))
\end{align*}
is an injective Fredholm operator. We claim that there exists an operator 
$$\mathcal T:\C \oplus \C \oplus \mathfrak s\oplus \mathfrak s\to \mathfrak s\oplus \mathfrak s$$
such that $\mathcal R=\mathcal T\circ \tilde{\mathcal R}$ and that $\mathcal T$ is Fredholm and injective, concluding the proof (since the range of a Fredholm operator is automatically complemented).

In order to do so, first notice that, since $\C \oplus \C \oplus \mathfrak{s} \oplus \mathfrak{s} \simeq \mathfrak{s} \oplus \mathfrak{s}$, we shall assume without loss of generality that the operator $\mathcal{T}$ is an endomorphism on $\mathfrak{s} \oplus \mathfrak{s}$ instead. 

Such an operator has already been constructed in \cite[Section~5.B]{RS2}, with the crucial difference that the authors did not manage to prove the fact that the operator
is Fredholm in $\mathcal S$; we follow their construction here and we prove that $\text{ind}_{\ell_s^2\oplus \ell_s^2}(\mathcal{T})$ is indeed
uniformly bounded in $\mathfrak s$, which will then allow us to apply Theorem \ref{thm : Fredholm} to conclude that the operator is indeed Fredholm on $
\mathfrak s$ as well. We shall only sketch the proof of injectivity of $\mathcal T$, since clearly the same proof as in \cite{RS2} works here.

The main ingredient of the proof is the observation that, for $n\ge n_0$, there exists
$m_1^{\pm}(n) \in \Z$ such that $x_{m_1^{\pm}(n)}=\pm\sqrt{n+\varepsilon_n(m_1^{\pm})}$ and $\varepsilon_n(m_1^{\pm})$ is small enough to satisfy the hypotheses of Theorem \ref{thm : pertSch}. 

Indeed, let us deal with the case of positive sign ($+$), the other one being entirely analogous; in doing so, we shall omit the superscript. By the hypotheses on the sequence $\{x_n\}_{n \in \N},$ we have that, for $n \ge n_0$ sufficiently large, $\sqrt{n}$ belongs to an interval 
\[
\sqrt{n} \in [x_{i(n)},x_{i(n)+1}),
\]
where we have $|x_{i(n)+1} - x_{i(n)}| \le C |x_{i(n)}|^{-D} \le C n^{-D/2}$. Hence, setting $m_1(n)$ to be $i(n)$, we have that 
\[
|x_{m_1(n)}-\sqrt{n}| \le C \cdot n^{-D/2},
\]
and thus if $x_{m_1(n)} = \sqrt{n + \varepsilon_n(m_1)},$ we must have $|\varepsilon_n(m_1)| \le C \cdot n^{\frac{1-D}{2}}$, which satisfies the hypotheses of Theorem \ref{thm : pertSch} as long as $\frac{D-1}{2} > \frac{5}{4}$, that is, $D > \frac{7}{2}$. By the same token, we find that there exists $n_1 \in \N$ such that for $n \ge n_1,$ there is $m_2^{\pm} (n)$ such that $y_{m_2^{\pm}(n)} = \pm \sqrt{n + \tilde{\varepsilon}_n(m_2^{\pm})},$ where $\tilde{\varepsilon}_n(m_2^{\pm})$ satisfies the hypotheses of Theorem \ref{thm : pertSch}. Let then $N = \max\{n_0,n_1\}.$ 

Given an element $(\mathbf{x},\mathbf{y})\in \mathfrak s\oplus \mathfrak s$, recall from Theorem \ref{thm : pertSch} that $f_{(\bx,\by)}\in\mathcal S$ denotes the function
\begin{equation*}
\sum \bx_i a_i(x)+\sum \by_i \widehat a_i(x),
\end{equation*}
which we will denote by $f$ in the following for simplicity, 
and define the operator $\mathcal T_\alpha:\ell^2_s\oplus \ell^2_s\to \ell^2_s\oplus \ell^2_s$ as
\begin{align*}
\mathcal T_\alpha(\bx,\by)_1&=\left(\mathbf{x}_0, f(x_{i_1}), 
\dots, f(x_{i_{2N}}), \{f(x_{m_1^{\pm}(k)})\}_{k \ge N+1} \right),\\
\mathcal T_\alpha(\bx,\by)_2&=
\left(\mathbf{y}_0, \widehat f(y_{j_1}), \dots, \widehat 
f(y_{j_{2N}}), \{\widehat{f}(y_{m_2^{\pm}(k)})\}_{k \ge N+1}\right),
\end{align*}
where $i_1,\dots,i_{2N}$ is a sequence of integers that do not 
appear in $m_1^{\pm}(n)$, and $j_1,\dots,j_{2N}$ are defined analogously with respect to $m_2^{\pm}(n).$ In order to prove that $\mathcal T_\alpha$ is Fredholm, define the operator $T_{N}$ as
\begin{align*}
T_{N}(\bx,\by)_1&=\left(\mathbf{x}_0, f(x_{i_1}), 
\dots, f(x_{i_{2N}}), \mathbf{x}_{N+1},\mathbf{x}_{N+2},\dots\right),\\
T_{N}(\bx,\by)_2&=\left(\mathbf{y}_0, \widehat f(y_{j_1}), \dots, \widehat 
f(y_{j_{2N}}), \mathbf{y}_{N+1},\mathbf{y}_{N+2},\dots\right).
\end{align*}
The operator $T_{N}$ is sc-Fredholm, since its 
difference from the operator 
\begin{equation*}
S(\bx,\by):=((0,\dots,0, \bx_{N+1},\dots),
(0,\dots,0, \by_{N+1},\dots))
\end{equation*}
is a finite rank operator, which is continuous since 
$\|f_{(\bx,\by)}\|_{L_\infty}\le K(\|(\bx,\by)\|)$.
Now, the same proof as in Theorem \ref{thm : pertSch} implies 
\begin{equation*}
\|T_{N}-\mathcal T_{\alpha}\|_{\text{HS}(\ell^2_s\oplus 
\ell^2_s,\ell^2_{s+\varepsilon}\oplus 
\ell^2_{s+\varepsilon})}\le K,
\end{equation*}
where $\varepsilon>0$ is sufficiently small. Hence it follows that $T_{N}-\mathcal T_{\alpha}$,
is an $sc^+$-operator, so by Theorem
\ref{thm : Fredholm}, we have that $\mathcal T_\alpha$ is sc-Fredholm on $\mathfrak s\oplus \mathfrak s$.
To prove that $\mathcal T_\alpha$ is indeed injective 
provided $c_D>0$ is sufficiently small, let us first observe 
that it suffices to prove that $\mathcal T_\alpha$ is injective on $\ell_2^{10}\oplus \ell_2^{10}$. To prove this, we define the operator
\begin{align*}
\widehat T(\bx,\by)_1:=(\bx_0, f(t_1),\dots, f(t_{2N}), \bx_{N+1},\dots,),\\
\widehat T(\bx,\by)_2:=(\by_0, \widehat f(r_1),\dots, \widehat f(r_{2N}),\by_{N+1},\dots),
\end{align*}
where $t_1,\dots, t_{2N}$ is a sequence of real numbers all larger than $\sqrt{N}$ that do not belong to $\sqrt{\mathbb N}$, and $r_1,r_2,\dots,r_{2N}$ is defined similarly. Here, we may for instance take $\{t_i\}_i$ and $\{r_j\}_j$ to belong to, say $\sqrt{\N + \frac{1}{2}}$. 

Now, thanks to \cite[Lemma~5.4]{RS2}, it follows that $\widehat T$ is injective and since it is clearly Fredholm, it is an embedding, implying that there exists a constant $k$ such that
$\|\widehat T(\bx,\by)\|\ge k\|(\bx,\by)\|$. Then, it is only a matter of choosing $x_{i_j},y_{i_j}$ and $t_j$ so that their distance is small enough,
as that will imply $\|\widehat T-\mathcal T\|_{\mathcal L}<\varepsilon$ and if that is small enough, the triangle inequality implies
\begin{equation*}
\|\mathcal T(\bx,\by)\|\ge k'\|(\bx,\by)\|>0,
\end{equation*}
proving the injectivity. An admissible choice for, for instance, $x_{i_j}$ is simply the closest member of the sequence to $t_j$ - since the choice we made for $\{t_j\}_j$ above guarantees that, for such indices, the $\{x_{i_j}\}$ do \emph{not} coincide with any of the previously selected $\{x_{m^{\pm}(n)}\}_n$. If $c_D>0$ is small enough, we have
\begin{align*}
|f_{(\bx,\by)}(t_j)-f_{(\bx,\by)}(x_{i_j})|&\le \sum |\bx_n|a_n(t_j)-a_n(x_{i_j})|+\sum  |\by_n|\widehat a_n(t_j)-\widehat a_n(x_{i_j})|
\\&\lesssim \|t_j- x_{i_j} \|_{\ell_\infty} \sum (|\bx_n|+|\by_n |)\|a_n'\|_\infty \\&\lesssim \varepsilon(c_D,D) \|(\bx,\by)\|,
\end{align*}
where $\varepsilon(c_D,D)=\|t_j-x_{i_j}\|_{\ell_\infty}.$ By the same argument as in the beginning of the proof for $\varepsilon_n(m)$,
it follows that for $c_D\to 0$, $\varepsilon(c_D,\alpha)\to 0$. Since the same argument can be applied verbatim to $y_{i_j}$ and $r_j$, this proves the desired injectivity, which, by the considerations in the beginning of the proof, imply the desired injectivity. 
\end{proof}

As previously noted, it follows from Proposition \ref{prop : preKNSsol} and Corollary \ref{cor : complstr} that, under the conditions of Theorem \ref{thm : KNSsoleven}, the interpolation formulas obtained indeed converge in the Schwartz class, as desired. 
\end{proof}

\begin{remark} The results above point us to the following conjecture:
\begin{conjecture}\label{conj:KNS-frame}
Every $\mathfrak s-$frame for $\mathcal S$ induced by a pair of sequences as in Theorem 2 of \cite{KNS} is a strong frame.
\end{conjecture}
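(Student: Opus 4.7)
The plan would be to reduce the conjecture, via Corollary \ref{cor : complstr}, to showing that for every $(p,q)$-supercritical pair $(\{x_n\}_n, \{y_n\}_n)$ satisfying the hypotheses of \cite[Theorem~2]{KNS} the image
\[
\mathcal{R}(\mathcal{S}) = \{(f(x_n), \widehat{f}(y_n))_n : f \in \mathcal{S}\}
\]
is complemented in $\mathfrak{s} \oplus \mathfrak{s}$. By Proposition \ref{prop : prelim-frame-KNS} this image is already closed, so the question reduces to producing a continuous projector. By Corollary \ref{cor : subframe}, it in fact suffices to exhibit a single subsequence $(\{x_{n_k}\}_k, \{y_{n_k}\}_k)$ that itself induces a strong $\mathfrak{s}$-frame.

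The most promising avenue is to extend the reference-and-perturbation scheme used in the proof of Theorem \ref{thm : KNSsoleven}. First, for each pair $p, q > 1$ with $\tfrac{1}{p} + \tfrac{1}{q} = 1$, I would seek to produce a \emph{reference} pair of sequences $\{\xi_n\}_n, \{\eta_n\}_n$ of asymptotic densities matching $n^{1/p}$ and $n^{1/q}$ respectively, which give a known strong $\mathfrak{s}$-frame for $\mathcal{S}$. Candidates should come either from the Bondarenko--Radchenko--Seip extensions in \cite{BRS} (level-raised analogues of Radchenko--Viazovska) or from direct constructions via Laplace transforms of modular forms of suitable weight. Second, given a supercritical pair, one would select a subsequence $(\{x_{n_k}\}_k, \{y_{n_k}\}_k)$ lying close to $(\{\xi_n\}_n, \{\eta_n\}_n)$, and verify that it arises as a sufficiently small perturbation of the reference. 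Finally, one would run the sc-Fredholm argument developed in Section \ref{sec : schw-g-s}: define the analogue of the operator $\mathcal{T}_\alpha$ on the scale $\ell^2_s \oplus \ell^2_s$, establish via a Hilbert--Schmidt estimate on the perturbation term that it is Fredholm of index zero at each level, deduce injectivity from the uniqueness of the reference interpolation together with supercriticality (which forces any Schwartz function vanishing on both sets to be zero), and pass to the projective limit via Proposition \ref{thm : Fredholm} to conclude.

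The main obstacle, and by far the most substantial part of the plan, is the first step: producing a flexible enough family of reference interpolation formulas. The Radchenko--Viazovska formula is tied to the specific arithmetic of the theta group $\Gamma_\theta$ and to the density $\sqrt{n}$; for general exponents $p, q$ no such reference is currently available, and constructing one would amount to a substantial extension of the modular form machinery underlying \cite{RV, BRS}. In particular, the sharp decay estimates of Theorem \ref{thm : RS2est} are essentially optimal (see Proposition \ref{prop : bound-decay-b-n}), so one cannot hope to stretch the perturbative radius of the existing Radchenko--Viazovska reference far beyond the regime already covered by Theorem \ref{thm : KNSsoleven}. A complementary approach would be to bypass reference formulas altogether and directly show that the analysis operator
\[
\mathcal{R}: \mathcal{H}^s_{p,q} \to \ell^2_{(2s-1)p+1, \{x_n\}} \oplus \ell^2_{(2s-1)q+1, \{y_n\}}
\]
is Fredholm with index uniformly bounded in $s$, which by Proposition \ref{thm : Fredholm} would again suffice; but this appears to be of comparable difficulty and, to our knowledge, also requires genuinely new analytic input, since it amounts to constructing explicit Schwartz functions realizing prescribed interpolation data on arbitrary supercritical sets.
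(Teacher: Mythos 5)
The statement is labeled as a \emph{conjecture} in the paper, and the paper itself does not prove it: immediately after stating it, the authors write that their ``results fall short of what is needed to prove the conjecture,'' and in Section \ref{sec : remarks} they reiterate that it ``seems to be out of reach with our current methods.'' So there is no proof in the paper to compare against. That said, your proposal is consistent with the paper's own framing: the reduction to complementedness via Corollary \ref{cor : complstr}, the reduction to a subsequence via Corollary \ref{cor : subframe}, and the reference-plus-perturbation/sc-Fredholm scheme are exactly the ingredients the paper uses to obtain the \emph{partial} result Theorem \ref{thm : KNSsoleven} (which requires the stronger density condition $D > \frac{7}{2}$). You also correctly identify the genuine obstruction: the lack of a sufficiently flexible family of reference interpolation formulas for general $(p,q)$, together with the fact that Proposition \ref{prop : bound-decay-b-n} forecloses stretching the perturbative radius of the Radchenko--Viazovska reference much beyond what Theorem \ref{thm : KNSsoleven} already covers.

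Two points in the paper's remarks are worth folding into your plan. First, the paper constructs (via Borel's theorem and the failure of $(DN)$ for $\C^{\N}$) a counterexample to \cite[Corollary~5.4]{FFS}, showing that closedness of $T(\mathcal{S})$ and its isomorphism to a complemented subspace of $\mathfrak{s}$ are \emph{not} enough to force complementedness; so your ``complementary approach'' of directly establishing Fredholm-ness with index uniformly bounded in $s$ is in fact the only viable abstract route, and the paper confirms one cannot shortcut it topologically. Second, the paper observes that the only constraints on $T(\mathcal{S})$ come from crystalline measures supported on $\{x_n\}$ with Fourier transform supported on $\{y_n\}$ — a set that is infinite-dimensional in the supercritical case by \cite[Section~7.3]{KNS} — and suggests that analyzing this space is the natural avenue for progress; this is a concrete target you might add alongside the reference-construction route. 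The paper also gives a conditional improvement (Proposition \ref{prop : improvedSchwartzconditional}): if one could show that $T_\varepsilon$ has closed range uniformly in $s$ whenever $\|\varepsilon\|<c$, the full Schwartz conclusion would follow; this closed-range hypothesis is precisely the analytic input currently missing.
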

While our results fall 
short of what is needed to prove the conjecture, it is interesting to point out that improved estimates of $\|I-T_\varepsilon\|_{\mathcal L}$,
together with estimates on the decay of $\varepsilon$ needed for $T$ to be closed, would automatically improve our result: it is enough to first 
apply proposition \ref{prop : improvedSchwartzconditional} and then the same proof as in the previous theorem.

\end{remark}

\section{Analyticity of Perturbed Basis Functions}\label{sec : analytic}

\subsection{Proof of Theorem \ref{thm : entire}} We begin our discussion with a bound on the growth of the original basis function from \cite{RV} in \emph{compact} subsets of $\C$. 

In order to state our first preliminary result, we recall some notation from both \cite{RS2} and \cite{RV}: given the interpolation basis $\{a_n\}_{n\in\N}$ of Theorem \ref{thm : RV}, we define its associated \emph{Fourier $\pm1$-eigenfunctions} as 
\[
b_n^{\pm}(x) = \frac{a_n(x) \pm \widehat{a_n}(x)}{2}. 
\]
With that notation, we have:
\begin{proposition}\label{prop : growth-entire-basis}
For any compact subset $K$ of $\C$, we have
\begin{equation*}
    \|b_n^{\pm}\|_{L^\infty(K)}\le C^n \max_{z\in K} e^{\pi |z|^2},
\end{equation*}
where $C>0$ is an absolute constant independent of $K$. 
\end{proposition}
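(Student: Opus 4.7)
The plan is to realize $b_n^{\pm}(z)$ as a Fourier coefficient in $\tau$ of the Radchenko--Viazovska generating function, and then estimate this generating function uniformly on a fixed horizontal segment in $\C_+$. Starting from Theorem \ref{thm : RV} applied to the Gaussian family $f(x) = e^{-\pi(-i\tau)x^2}$ (with $\tau \in \C_+$, so $\mathrm{Re}(-i\tau) > 0$), one obtains by analytic continuation in $z$ the identity
\[
e^{\pi i \tau z^2} = \sum_{n \ge 0} a_n(z)\, e^{\pi i n \tau} + (-i\tau)^{-1/2} \sum_{n \ge 0} \widehat{a}_n(z)\, e^{-\pi i n/\tau}, \qquad z \in \C.
\]
Passing to the Fourier eigenfunctions $b_n^{\pm} = \tfrac{1}{2}(a_n \pm \widehat{a}_n)$ and combining with the identity's $S$-transformed version yields a representation for the generating function
\[
G^{\pm}(\tau, z) := \sum_{n \ge 0} b_n^{\pm}(z)\, e^{\pi i n \tau} = F^{\pm}(\tau)\, e^{\pi i \tau z^2} + H^{\pm}(\tau)\, e^{-\pi i z^2/\tau},
\]
where $F^{\pm}, H^{\pm}$ are rational combinations of the theta series $\Theta_j$ and the modular invariant $J$ from Section \ref{ssec : modular}, determined by solving the associated $\Gamma_\theta$-functional equation as in \cite{RV}.

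Next, I would bound $G^{\pm}(\tau, z)$ on the segment $\tau = \sigma + i$, $\sigma \in [0, 2]$. On this segment the modular coefficients $F^{\pm}(\sigma + i), H^{\pm}(\sigma + i)$ are continuous in $\sigma$ and bounded by an absolute constant, by the $q$-expansions \eqref{eq : J-dec}, \eqref{eq : J-inv-dec} together with the non-vanishing of $J$ on $\C_+$. For the Gaussian factor $e^{-\pi i z^2/\tau}$ at $\tau = \sigma + i$, a direct computation with $z = u + iv$ gives
\[
|e^{-\pi i z^2/\tau}| = \exp\!\left( \frac{\pi(v^2 - u^2 + 2\sigma uv)}{\sigma^2 + 1}\right),
\]
and the inequality $(v^2 - u^2 + 2\sigma uv)/(\sigma^2 + 1) \le u^2 + v^2$ is equivalent to the positive semidefiniteness of the quadratic form $(2+\sigma^2) u^2 - 2\sigma uv + \sigma^2 v^2$, which holds since its discriminant equals $-4\sigma^2(1 + \sigma^2) \le 0$. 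The companion Gaussian $e^{\pi i \tau z^2}$ does \emph{not} satisfy the same bound on this segment by itself, so its contribution to $G^{\pm}$ must be controlled via the $S$-symmetry of the modular coefficient $F^{\pm}(\tau)$, which forces the product $F^{\pm}(\sigma + i)\, e^{\pi i(\sigma + i)z^2}$ into the same $e^{\pi|z|^2}$ envelope through cancellation.

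With the uniform bound $|G^{\pm}(\sigma + i, z)| \le C\, e^{\pi|z|^2}$ in place, Fourier inversion on the $2$-periodic strip gives
\[
b_n^{\pm}(z) = \tfrac{1}{2} \int_0^2 G^{\pm}(\sigma + i, z)\, e^{-\pi i n(\sigma + i)}\, d\sigma,
\]
whence $|b_n^{\pm}(z)| \le e^{\pi n} \sup_{\sigma \in [0,2]} |G^{\pm}(\sigma+i,z)| \le C\, e^{\pi n}\, e^{\pi|z|^2}$. Absorbing the prefactor into the base of the exponential gives $\|b_n^{\pm}\|_{L^\infty(K)} \le C_0^n \max_{z \in K} e^{\pi|z|^2}$ with $C_0 = Ce^{\pi}$ an absolute constant, as required.

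The main obstacle is verifying that in the concrete modular-form expression for $G^{\pm}$ both Gaussian pieces fit into the same $Ce^{\pi|z|^2}$ envelope on the chosen contour, given that the factors $e^{\pi i \tau z^2}$ and $e^{-\pi i z^2/\tau}$ behave asymmetrically on a single horizontal line. The $\Gamma_\theta$-equivariance of the modular coefficients $F^{\pm}, H^{\pm}$ should force the two pieces into the correct relative size, but tracking the constants explicitly through the $\Theta_j$ and $J$-dependence from \cite{RV} is the technical heart of the argument.
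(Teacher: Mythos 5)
Your proposal takes a genuinely different route from the paper, and the step you yourself flag as ``the technical heart'' is, in fact, a real gap that is not filled.

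The paper does \emph{not} Fourier-invert the generating function $\sum_n b_n^{\pm}(z)\,e^{\pi i n \tau}$ directly. It instead uses the two-step representation from \cite{RV}: first it bounds the weight-$3/2$ weakly holomorphic forms $g_n^{\pm}$ by integrating the \emph{explicit} closed-form generating function from \cite[Theorem~3]{RV} over a horizontal segment at height $T$, obtaining $|g_n^{\pm}(z)| \lesssim e^{\pi n T}$ for $z \in \S^1 \cap \C_+$; then it plugs this into the semicircle contour formula $b_n^{\pm}(w) = \tfrac12 \int_{-1}^1 g_n^{\pm}(z) e^{\pi i w^2 z}\,\diff z$ of \cite[Eq.~(22)]{RV}. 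The envelope $e^{\pi |w|^2}$ then falls out immediately from $|e^{\pi i w^2 z}| \le e^{\pi |w|^2}$ for $|z| = 1$, and the $C^n$ comes from the $g_n$ bound. This cleanly separates the $\tau$ and $z$ contours, which is precisely what avoids the difficulty you run into.

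Your approach collapses the two contours into one and pays for it in two places. First, the decomposition $G^{\pm}(\tau,z) = F^{\pm}(\tau)e^{\pi i \tau z^2} + H^{\pm}(\tau)e^{-\pi i z^2/\tau}$ is not a consequence of the RV interpolation identity and its $S$-transform alone: applying $\tau \mapsto -1/\tau$ gives two equations relating the four quantities $A(\tau), A(-1/\tau), \widehat A(\tau), \widehat A(-1/\tau)$, which is an under-determined system at each fixed $\tau$; deriving a closed form requires the full weakly-holomorphic modular form argument of \cite{RV}, and the kernel that actually emerges there is a contour integral of a modular function against the Gaussian, not a two-term sum of the proposed shape. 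Second, and more seriously: even granting some such decomposition, the factor $e^{\pi i \tau z^2}$ on the line $\Im\tau = T$ has modulus $e^{\pi[T(v^2-u^2) - 2\sigma uv]}$ for $z = u+iv$, which can be as large as $e^{\pi T |z|^2}$ when $|v| > |u|$. Taking $T$ larger to make the modular coefficient $F^{\pm}$ smaller only worsens this Gaussian growth, so there is a genuine tension that cannot be resolved by ``tracking constants'' alone; it requires exactly the separation of contours that the paper's proof builds in by design. You correctly identify this as the crux, but the claim that the $\Gamma_\theta$-equivariance ``forces the two pieces into the correct relative size'' is the assertion to be proved, not an observation. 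There is also a latent circularity: to Fourier-invert $G^{\pm}$ at a finite height $T$ you need the series $\sum_n b_n^{\pm}(z) e^{\pi i n \tau}$ to converge there, which already presupposes an exponential growth bound on $b_n^{\pm}(z)$ of the kind you are trying to establish; the paper sidesteps this because the series defining $g_n^{\pm}$ has an a priori explicit sum as a modular function.
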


\begin{proof}
We prove the statement for the sequence $\{b_n^+\}_{n \in \N}$, and later indicate where the proof needs slight modifications to accommodate for the case of $\{b_n^{-}\}_{n \in \N}.$ \\

Recall now from \cite[Eq. (22)]{RV} that $b_n^+$ is defined as
\begin{equation}\label{eq : def-b_n}
    b_n^+(x):=\frac{1}{2} \int_{-1}^{1}g_n^+(z)e^{\pi i x^2 z}\diff z,
\end{equation}
where the integral is taken over a semicircle in 
the upper half plane joining $-1$ to $1$, and $g_n^+$ is a weakly holomorphic modular form of weight $3/2$, which is defined as
\begin{equation*}
    g_n(z):=\theta^3(z) \cdot P_n^+\left(\frac1J\right),
\end{equation*}
with the polynomial $P_n$ chosen so that
\begin{align*}
    g_n^+(z)=q^{-n/2}+O(q^{\frac12}).
\end{align*}
We now invoke \cite[Theorem~3]{RV}: this result states that 
\begin{align}\label{eq : generating-g_n}
\sum_{n \ge 0} g_n^+(z) e^{\pi i n \tau} & = \frac{\theta(\tau)(1-2\lambda(\tau))\theta^3(z) J(z)}{J(z) - J(\tau)}, \cr 
\sum_{n \ge 0} g_n^{-}(z) e^{\pi i n \tau} &= \frac{\theta(z)(1-2\lambda(z))\theta^3(\tau) J(\tau)}{J(z)-J(\tau)},
\end{align}
where the convergence holds, for instance, if $z \in \mathbb{S}^1 \cap \C_+$ and $\Im(\tau)$ is sufficiently large. The identities \eqref{eq : generating-g_n} will be instrumental in proving an upper bound for $g_n^{\pm}$. 

Indeed, we note that, upon integrating over the segment $[-1+iT,1+iT]$ in the complex plane, we have that 
\[
g_n^+(z) = \int_{-1+iT}^{1 + iT} \frac{\theta(\tau)(1-2\lambda(\tau))\theta^3(z) J(z)}{J(z) - J(\tau)} \cdot e^{-\pi i n \tau} \, \diff \tau.
\]
We now note that, for $\tau \in [-1 + iT,1+iT]$, if $T$ is sufficiently large, then there is $C>0$ such that 
\begin{equation}\label{eq : theta-lambda}
|\theta(\tau)| + |\lambda(\tau)| \le C.
\end{equation}
Indeed, both assertions may be proved at once by the respective $q-$ expansions of the functions $\theta, \lambda$, as highlighted in Subsection \ref{ssec : modular}. Now, one simply notes that 
\[
\frac{J(z)}{J(z) - J(\tau)} = \sum_{m \ge 0} J(\tau)^m J(z)^{-m},
\]
and, since the $q-$expansion of $J$ at infinity begins with $q^{1/2}$, we have that $|J(\tau)| \le C e^{-\pi \Im(\tau)}$ for $\Im(\tau)$ sufficiently large, and then we may bound 
\begin{align}\label{eq : upper-g-n-plus}
|g_n^+(z)| & \le C|\theta^3(z)| \int_{-1}^1  \left( \sum_{m \ge 0} |J(t + iT)|^m \left| \frac{1}{J(z)}\right|^{m} \right)  \cdot e^{\pi n T}\, \diff t \cr 
 & \le C \cdot e^{\pi n T} |\theta^3(z)| \int_{-1}^1 \left( \sum_{m \ge 0} e^{-\pi m T} \cdot (64)^m \right) \, \diff t \lesssim e^{\pi n T},
\end{align}
where in the last passage we have used that $\theta$ is bounded on $\mathbb{S}^1 \cap \C_+$. This implies, hence, by \eqref{eq : def-b_n}, that 
\[
\sup_{z \in K} |b_n^+(z)| \le C^n \sup_{z \in K} e^{ \pi |z|^2},
\]
where $C:= e^{\pi T}$, with $T$ chosen sufficiently large. 

In order to deduce the same result for $b_n^{-}$, we shall resort once more to \eqref{eq : generating-g_n}. Indeed, we may write:
\begin{equation}\label{eq : def-g_n-minus} 
g_n^{-}(z) = \int_{-1+iT}^{1 + iT} \frac{J(\tau)}{J(z)} \cdot  \frac{\theta(z)(1-2\lambda(z))\theta^3(\tau) J(z)}{J(z) - J(\tau)} \cdot e^{-\pi i n \tau} \, \diff \tau.
\end{equation} 
In order to analyse this new integral, one simply notes that the function $z \mapsto \frac{\theta(z) (1-2\lambda(z))}{J(z)}$ is \emph{bounded} on $\mathbb{S}^1 \cap \C_+$. The first step in order to see this is to use that $\theta$ is bounded on $\mathbb{S}^1 \cap \C_+$. Now, since 
\[
\lambda\left( -\frac{1}{z}\right) = 1 - \lambda(z), \, \lambda(z+1) = \frac{\lambda(z)}{\lambda(z)-1}, 
\]
it follows that 
\[
\lambda(1- 1/z) = 1 - \frac{1}{\lambda(z)} = q^{-1/2} + O(1).
\]
Now, since we also know from \eqref{eq : J-inv-dec} that
$$\frac{1}{J(1 - \frac{1}{z})} = -4096 q + O(q^2),$$
it follows that, for $z \in \mathbb{S}^1 \cap \C_+$, the function $z \mapsto \frac{1-2\lambda(z)}{J(z)}$ remains bounded (and moreover vanishes at $\pm 1$). Using this together with the estimates and techniques employed in the case of $g_n^+,$ we easily conclude from \eqref{eq : def-g_n-minus} that 
\[
|g_n^-(z)| \lesssim e^{\pi T n}, 
\]
which, in turn, implies the conclusion of the proposition for $b_n^{-}$, as desired.
\end{proof}

\begin{remark} As a direct consequence of the proof of Proposition \ref{prop : growth-entire-basis} above, if we let $d_n^{\pm}$ denote the Fourier $\pm1$-eigenfunctions 
\[
d_n^{\pm} = \frac{c_n \pm \widehat{c_n}}{2}
\]
associated with the interpolation basis $\{c_n\}_{n \in \N}$ of \emph{odd} Schwartz functions on the real line - which already figured above in the proof of Theorem \ref{thm : KNSsoleven} -, then we may equally conclude that, for each compact subset $K$ of $\C$, we have 
\[
\sup_{z \in K} |d_n^{\pm}(z)| \le C^n \sup_{z \in K} \left( |z| \cdot e^{\pi |z|^2}\right).
\]
Since the proof is almost identical to that of Proposition \ref{prop : growth-entire-basis}, with very few points where modifications are needed, we shall omit it. 
\end{remark}

\begin{remark} One might wonder whether the proof of Proposition \ref{prop : growth-entire-basis} above could have been accomplished by a better estimate on the basis functions than that of Theorem \ref{thm : RS2est}: indeed, a version of the Paley--Wiener theorem states that, if $|g(x)| \le C e^{-c|x|^2}$, then $\widehat{g}$ is of order $2$, with an estimate analogous to that of Proposition \ref{prop : growth-entire-basis}. 

Unfortunately, as the next result shows, the best order of decay possible for the functions $\{b_n^{\pm}\}_{n \in \N}$ is \emph{exponential}, which merely guarantees that the basis functions themselves belong to the class of analytic functions on a strip: 

\begin{proposition}\label{prop : bound-decay-b-n} For each $n \in \N$, there are $c_n, C_n> 0$ such that, whenever $|x| \ge C_n$, 
\[
|b_n^{\pm}(x)| \ge c_n \cdot \text{\emph{dist}}(x,\pm\sqrt{\N}) e^{-c|x|}. 
\]
\end{proposition}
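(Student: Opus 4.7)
The plan is to extract a sharp asymptotic for $b_n^+(x)$ as $|x|\to\infty$ directly from the integral representation \eqref{eq : def-b_n}; the case of $b_n^-$ will follow an analogous route with only minor modifications. First, I would deform the upper semicircular contour in \eqref{eq : def-b_n} into the rectangular path $-1\to -1+iT\to 1+iT\to 1$ in $\C_+$. Thanks to the $2$-periodicity $g_n^+(z+2)=g_n^+(z)$ --- inherited from the invariance of $\theta^3$ and $J$ under $T^2$ --- the two vertical segments combine, while the horizontal segment at height $T$ is $O(e^{-\pi(x^2-n)T})$ because of the bound $|g_n^+(z)|\lesssim e^{\pi n \Im(z)}$ coming from the $q$-expansion at $i\infty$, and so vanishes as $T\to\infty$ provided $|x|^2>n$. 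The resulting clean identity is
\begin{equation*}
 b_n^+(x) = \sin(\pi x^2)\int_0^\infty g_n^+(-1+it)\, e^{-\pi x^2 t}\,\diff t.
\end{equation*}

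I would then apply Laplace's method, reducing the problem to the behavior of $g_n^+(-1+it)$ as $t\to 0^+$. Writing $g_n^+=\theta^3 \cdot P_n^+(1/J)$ and using the transformation identities \eqref{eq theta transform} for $\theta$ together with \eqref{eq : J-inv-dec}, one obtains $\theta^3(-1+it)\sim 8\,t^{-3/2}e^{-3\pi/(4t)}$ and $1/J(-1+it)\sim -4096\,e^{-2\pi/t}$ as $t\to 0^+$, producing an asymptotic $g_n^+(-1+it)\sim C_n\, t^{-3/2}e^{-\alpha_n/t}$ with $\alpha_n\ge 3\pi/4$. The constant $C_n$ may be read off from the generating identity \eqref{eq : generating-g_n}: since $\theta^3(z)J(z)/(J(z)-J(\tau))\to\theta^3(z)$ as $z\to -1$ (for fixed $\tau$ with $\Im\tau$ large), $C_n$ equals $8$ times the coefficient of $e^{\pi i n\tau}$ in $\theta(\tau)(1-2\lambda(\tau))=1-30\,e^{\pi i\tau}+192\,e^{2\pi i\tau}-896\,e^{3\pi i\tau}+\cdots$. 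When this coefficient is nonzero, a saddle-point analysis at $t^*=\sqrt{\alpha_n/(\pi x^2)}$ yields
\begin{equation*}
 \int_0^\infty g_n^+(-1+it)\, e^{-\pi x^2 t}\,\diff t \sim c_n\, e^{-2\sqrt{\alpha_n\pi}\,|x|}, \qquad |x|\to\infty,
\end{equation*}
with $c_n\neq 0$.

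Finally, combining this asymptotic with the elementary bound $|\sin(\pi x^2)|\asymp |x|\cdot\mathrm{dist}(x,\pm\sqrt{\N})$, valid for $|x|$ sufficiently large (since consecutive elements of $\pm\sqrt{\N}$ separate like $|x|^{-1}$, so that $\mathrm{dist}(x^2,\Z)\asymp 2|x|\cdot \mathrm{dist}(x,\pm\sqrt{\N})$ locally), produces $|b_n^+(x)|\gtrsim_n \mathrm{dist}(x,\pm\sqrt{\N})\,e^{-c|x|}$ whenever $|x|\ge C_n$, which is the claim. The case of $b_n^-$ is handled identically by invoking the companion generating function in \eqref{eq : generating-g_n} and using that $\theta(z)(1-2\lambda(z))/J(z)$ is bounded on $\S^1\cap\C_+$, a fact already established in the proof of Proposition \ref{prop : growth-entire-basis}. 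The principal technical obstacle is guaranteeing that the leading coefficient $C_n$ is nonzero for every $n$; should it ever vanish for some particular $n$, one passes to the first nontrivial coefficient in the cusp expansion of $g_n^\pm$ at $\pm 1$, which only enlarges $\alpha_n$ and therefore produces an exponential lower bound with a possibly worse exponential constant but the same qualitative conclusion.
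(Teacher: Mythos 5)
Your proof is essentially the paper's: deform the contour in \eqref{eq : def-b_n} to the vertical rays over $\pm 1$ (using $2$-periodicity of $g_n^{\pm}$), pull out the factor $\sin(\pi x^2)$ to arrive at $b_n^{\pm}(x)=\sin(\pi x^2)\int_0^\infty g_n^{\pm}(1\pm it)\,e^{-\pi x^2 t}\diff t$, extract the leading behaviour of $g_n^{\pm}$ at the cusp $\pm1$ from the transformation formulas for $\theta$ and $J$, run a saddle point, and combine with the elementary lower bound on $|\sin(\pi x^2)|$ near $\pm\sqrt\N$. Your asymptotic $\theta^3(1+it)\sim 8\,t^{-3/2}e^{-3\pi/(4t)}$ is the correct one, and the identification of the leading constant as the $n$-th coefficient of $\theta(\tau)(1-2\lambda(\tau))$ via \eqref{eq : generating-g_n} is a nice refinement that the paper leaves implicit under the label $P_n^{+}(0)$.

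Where you have a genuine gap is precisely the point you flag at the end. The nonvanishing $P_n^{+}(0)\neq 0$, $(P_n^{-})'(0)\neq 0$ is not an optional extra: your fallback of passing to the next cusp coefficient replaces $\alpha_n=3\pi/4$ by $3\pi/4+2\pi k$ for some $k=k(n)\ge 1$, and after the saddle point at $t^\ast\asymp\sqrt{\alpha_n}/|x|$ the decay rate becomes $e^{-2\sqrt{\pi\alpha_n}\,|x|}$, so the constant $c$ in $e^{-c|x|}$ would now depend on $n$. The proposition asserts a fixed $c$ with only $c_n,C_n$ depending on $n$, and this uniformity is exactly what the surrounding remarks use (a fixed exponential rate of decay, independent of $n$). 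The paper addresses this by a direct contradiction argument comparing the $q$-expansion $g_n^{\pm}=q^{-n/2}+O(q^{1/2})$ at $i\infty$ with the representation $g_n^{\pm}=\theta^3\,P_n^{\pm}(1/J)$, which is the one ingredient missing from your write-up. I would, however, re-derive that nonvanishing carefully rather than citing it: the inequality invoked in the paper, $|J(z)^n g_n^+(z)|\le C|\theta^3(z)||J(z)|$, is hard to reconcile with the fact that $J^n g_n^+ = 1+O(q^{1/2})\to 1$ at $i\infty$ for \emph{every} $n$, regardless of whether $P_n^+(0)$ vanishes; your own observation that $P_n^{+}(0)$ is the $n$-th coefficient of $\theta(\tau)(1-2\lambda(\tau))$ may well be the cleanest route to a verified nonvanishing statement.
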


\begin{proof} We first note that, from the Fourier expansion of $\{g_n^{\pm}\}_{n \in \N}$ at infinity and the definition of $b_n^{\pm}$, it follows from a change of contours that, whenever $|x| \ge \sqrt{n}$, it holds that 
\begin{equation}\label{eq : b-n-laplace} 
b_n^{\pm}(x) = \sin(\pi x^2) \int_0^{\infty} g_n^{\pm}(1+it) \, e^{-\pi x^2 t}\, \diff t. 
\end{equation} 
The next step is to use the definition of the functions $g_n^{\pm}$ in order to estimate the integral in \eqref{eq : b-n-laplace}. Note first that, for $|t|$ large, we may simply use the Fourier expansion of $g_n^{\pm}$ at infinity, which predicts that at $i \infty$
\begin{equation}\label{eq : g_n-decay-weak} 
g_n^{\pm}(z) = q^{-n/2} + O(q^{-(n-1)/2}).
\end{equation} 
On the other hand, we recall that 
\begin{align*}
g_n^+(z) &= \theta^3(z) \cdot P_n^+(1/J(z)), \cr 
g_n^{-}(z) & = \theta^3(z) (1-2\lambda(z)) P_n^{-}(1/J(z)),
\end{align*}
where $\{P_n^{\pm}\}_{n \ge 0}$ are sequences of monic polynomials with $P_n^-(0) = 0.$ We now claim that $P_n^+(0) \neq 0$ and that $(P_n^-)'(0) \neq 0$. Indeed, suppose that $P_n^+(0) = 0$. Then
\[
|J(z)^n \cdot g_n^+(z)| \le C |\theta^3(z)| |J(z)|\le C e^{-\pi \Im(z)},
\]
while, by \eqref{eq : g_n-decay-weak}, we have that the left-hand side above goes to a \emph{constant} as $\Im(z) \to \infty$, a contradiction. Hence, we conclude that $P_n^+(0) \neq 0$. By the same argument, we get that $(P_n^-)'(0) \neq 0$, as desired. 

Now, note that for $z = 1+it, \, |t| \text{ small}$, we have by the $q-$expansion of $J^{-1}(1-1/z)$ (see Subsection \ref{ssec : modular}) that 
\[
\left| \frac{1}{J(1+it)}\right| \le C e^{-\frac{2\pi}{t}}, 
\]
and hence 
\[
|g_n^+(1+it)| \ge |\theta^3(1+it)| \left( |P_n^+(0)| - C_n e^{-\frac{2\pi}{t}}\right).
\]
Now, using that, by the transformation properties of $\theta$, we have 
\[
\theta(1+it) \sim \frac{2}{\sqrt{t}} \exp\left(-\frac{\pi}{4t}\right) \quad \text{as } t \to 0^+,
\]
it follows that 
\begin{equation}
|g_n^+(1+it)| \ge \frac{c_n}{t^3} e^{-\frac{3\pi}{4t}}, \text{ for } t \text{ sufficiently small}. 
\end{equation}
Hence, using \eqref{eq : b-n-laplace} and \eqref{eq : upper-g-n-plus}, it follows that 
\begin{align*}
|b_n^+(x)| &\ge \text{dist}(x,\pm\sqrt{\N}) \left( c_n \int_0^{c_n}  t^{-3} e^{-\frac{3\pi}{4t}} e^{-\pi x^2 t} \, \diff t - C^n \int_{c_n}^{\infty} e^{-\pi x^2 t} \, \diff t \right) \cr 
          & \ge \text{dist}(x, \pm \sqrt{\N})  \left( c_n \int_0^{c_n} e^{-\frac{\pi}{2t}} e^{-\pi x^2 t}\, \diff t  - C_n e^{- c_n x^2}\right) \cr 
          & \ge \text{dist}(x, \pm \sqrt{\N}) \left( c_n' e^{-c|x|} - C_n e^{-c_n x^2} \right) \ge c_n \text{dist}(x,\pm \sqrt{\N}) \cdot e^{-c|x|}. 
\end{align*} 
This finishes the proof for $\{b_n^+\}$. For $\{b_n^-\}$, an entirely analogous argument is available, which allows us to finish also in that case, concluding the proof. 
\end{proof}
\end{remark} 

%\begin{remark}
%While the estimates in the previous proof
%are rather crude, the growth estimate cannot be 
% improved upon at infinity, as the functions are necessarily of 
 %order $2$
%%Needs checking:
%%Since functions in $\Sigma_\alpha^\alpha$ (with $\alpha<1$) must satisfy
%\begin{equation*}
 %   |f(x+iy)|\lesssim \exp(-a|x|^{\frac 1\alpha}+b
  %  |y|^{\frac1{1-\alpha}})
%\end{equation*}
%The version of the saddle method used above only
%works asymptotically on radial lines, so I need
%to check the growth of the error term to see if
%that indeed entails 
%\end{remark}

\begin{remark}[Sharpness of Proposition \ref{prop : growth-entire-basis}] As a last remark before proving Theorem \ref{thm : entire}, we note that the order of growth in Proposition \ref{prop : growth-entire-basis} cannot be much improved. Indeed, a natural question is whether the factor $C^n$ in the estimate of that result could be improved to a \emph{polynomial factor} - which would immediately, through an adaptation of the proof below, give an improvement of Theorem \ref{thm : entire}. 

As it turns out, a very quick argument is available to show that this \emph{cannot} happen. Indeed, suppose for the sake of a contradiction that there are $A,B > 0$ such that 
\[
|b_n^{\pm}(z)| \lesssim n^{A} e^{B|z|^2}, \quad \forall z\, \in \C. 
\]
Take now $f$ to be any compactly supported Schwartz function. Then, by virtue of the original Radchenko-Viazovska interpolation formula (Theorem \ref{thm : RV}) we would have that the sum 
\[
z \mapsto \sum_{n \ge 0} \left( f(\sqrt{n}) a_n(z) + \widehat{f}(\sqrt{n}) \widehat{a_n}(z)\right)
\]
converges \emph{uniformly on compact sets} of $\C$, and hence it has to converge to an entire function; on the other hand, by Theorem \ref{thm : RV}, it converges to $f$ for $z \in \R$. This yields an obvious contradiction, since then we would have necessarily that $f \equiv 0$. 

This concludes that \emph{no} sort of polynomial improvement may be achieved for Proposition \ref{prop : growth-entire-basis}. A similar argument, using now Gelfand-Shilov functions of suitable orders, shows that one cannot improve the exponent of $n$ in the exponential to anything strictly less than $1$ either, proving hence the effective optimality of the results of the proposition. 
\end{remark}

Given these considerations, we are now ready to prove Theorem \ref{thm : entire}.
\begin{proof}[Proof of Theorem \ref{thm : entire}]
We wish to prove that the following
interpolation formula holds:
\begin{equation*}
    f(x)=\sum f(\sqrt{n+\varepsilon_n})\tilde h_n(x)+\sum \widehat {f}(\sqrt{n+\varepsilon_n})\tilde{g}_n(x),
\end{equation*}
with $\tilde f_n, \tilde g_n$ 
entire of order $2$ and $\varepsilon_n$ a perturbing sequence decaying exponentially as 
described in the statement of the theorem.
Let us consider, as before,
the operator $T$, this time on a slightly different
weighted $\ell^2$ space, namely on $\ell^2(e^{cn}, \N)\oplus \ell^2(e^{cn}, \N)$ defined as
\begin{equation*}
    T(\bx, \by):=(\bx_0, (f_{\bx,\by}(\sqrt{n+\varepsilon_n}))_n, \by_0, 
    (\widehat f_{\bx,\by}(\sqrt{n+\varepsilon_n}))_n),
\end{equation*}
a priori only defined on compactly supported sequences. As in the case of 
$\ell_s^2( \N)$, a simple calculation proves that $\|I-T\|_{\text{HS}}<1$ which we only sketch here: using the same notation
as for $\ell_2(n^s, \N),$
we have
\begin{equation*}
    \mathcal I\le
    \sum_k \exp(ck) C |\varepsilon_k|^2 e^{-K\sqrt{k+\varepsilon_k}}
    +\sum_{n=1}^\infty\sum_{k=1}^\infty \exp(-cn)
    \exp(ck) n^{\frac32}\log^6(1+n)
    \frac{|\varepsilon_k|^2}k e^{-\frac nk}
\end{equation*}
thus if $|\varepsilon_n|\le c \exp(-Cn)$ with $c> 0$ sufficiently small and $C>0$ sufficiently large, the claim follows. We thus obtain that $T$ extends to the whole space and that it is invertible.
Using the inverse of $T$ we get that,
given $f \in \mathcal{S}_{\text{even}}$,
\begin{equation*}
    T^{-1}((f(\sqrt{n+\varepsilon_n}), \widehat f(\sqrt{n+\varepsilon_n}))
    =((f(\sqrt n)), (\widehat f(\sqrt n))).
\end{equation*}
Taking the $k-$th component of this, we obtain
\begin{equation*}
    f(\sqrt k)=\sum \gamma_{j,k}f(\sqrt{j+\varepsilon_j})+\tilde \gamma_{j,k}
    \widehat f(\sqrt{j+\varepsilon_j}),
\end{equation*}
where  
\begin{align*}
    \gamma_{n,k}&=T^{-1}((\delta_n),(0))_k\\
    \tilde{\gamma}_{n,k}&=T^{-1}((0),(\delta_n))_k
\end{align*}and $\delta_n$ denotes the sequence $(\delta_n(m))_m$. Since $T^{-1}$ is a bounded operator, it follows that
\begin{align*}
    |\gamma_{n,k}|&\le |\langle T^{-1}((\delta_n),(0)), ((\delta_k),0)\rangle|
    \\
    &\le e^{cn-ck} |\langle T^{-1}((e_n),(0)), ((e_k),0)\rangle|
    \\&\le \|T^{-1}\|
     e^{cn-ck},
\end{align*}
where $e_n$ is the normalised version of $\delta_n$. It follows that $|\gamma_{n,k}|, |\tilde{\gamma}_{n,k}|\lesssim e^{cn-ck}$. Applying this to the Radchenko-Viazovska interpolation formula 
together with Fubini-Tonelli implies
\begin{equation*}
    \tilde f_n(z)=\sum \left(\gamma_{n,k}a_k(z)+\tilde{\gamma}_{n,k}\widehat a_{n}(z)\right).
\end{equation*} 
Thanks to the bound in the previous
proposition, the right-hand side converges absolutely in compact subsets of $\C$: 
\begin{equation}\label{eq : bound-sum-analytic}
    \sum |\gamma_{n,k}|\|a_k(z)\|_{L^\infty(K)}\lesssim e^{cn} \sum e^{-ck}C^k\sup_{z \in K} \exp(\pi |z|^2)<\infty.
\end{equation}
This proves that the functions are entire (the same
argument, mutatis mutandis, works for $\tilde h_n$).
To prove our claim about the order, the proof
above needs to be refined slightly: instead
of considering the $L^\infty$ norm, we use a weighted
version of it; namely $\|f(z)e^{2\pi |z|^2}\|_{L^\infty(K)}$. By the same analysis as
above it follows
that the functions are of order $2$ and type $\pi$, as desired. 
\end{proof}

\begin{remark}
It is important to note that, in analogy to Theorem \ref{thm : pertSch}, Theorem \ref{thm : entire} also has a counterpart whose proof is much quicker and more elementary, as long as we are satisfied with a weaker conclusion. This is the content of the next result, which states that, under the same condition on the perturbations as in Theorem \ref{thm : entire}, our basis functions are still \emph{analytic on a strip} - as a matter of fact, we may still control the growth of their seminorms precisely:

\begin{theorem}\label{thm: exponential-decay-perturb}
Let $\{\varepsilon_n\}_n$ be a sequence of values and assume that $|\varepsilon_n|\le c e^{-Kn} $. Then there exists
a constant $K_0$ such that if $K>K_0$ and $c>0$ is sufficiently small depending on $K$, then we have
that the perturbed interpolating functions (whose existence follows from Theorem \ref{thm : pertSch}) belong to $S^1_{1,h}$ for some $h>0$, where the spaces $S^1_{1,h}$ are defined as in the proof of Corollary \ref{cor : seminorms}. 

As a consequence, the perturbed interpolating functions have to be \emph{analytic} in a strip. 
\end{theorem}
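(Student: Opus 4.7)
The plan is to mimic the operator construction from the proof of Theorem \ref{thm : entire}, but to combine it with the Gelfand--Shilov estimates underlying Corollary \ref{cor : seminorms}, so as to control only a \emph{strip} of analyticity instead of global entireness. As a first step, I would work on the exponentially weighted scale $\ell^2(e^{\alpha n},\N)\oplus \ell^2(e^{\alpha n},\N)$ (with $\alpha>0$ to be fixed in the sequel) and run the identical Hilbert--Schmidt estimate on $I-T$ as in the proof of Theorem \ref{thm : entire}: provided the perturbations satisfy $|\varepsilon_n|\le c e^{-Kn}$ with $K>K_0(\alpha)$ sufficiently large and $c>0$ sufficiently small depending on $K$, one obtains $\|I-T\|_{HS}<1$ on that space. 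It then follows that $T$ is invertible there, and the matrix coefficients $\gamma_{n,k},\tilde{\gamma}_{n,k}$ of $T^{-1}$ (defined exactly as in the proof of Theorem \ref{thm : entire}) satisfy the bound $|\gamma_{n,k}|+|\tilde{\gamma}_{n,k}|\lesssim e^{\alpha n-\alpha k}$.

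The next step would be to track the dependence on $k$ of the Gelfand--Shilov norm of the Radchenko--Viazovska basis. Starting from the pointwise bound underlying Corollary \ref{cor : seminorms},
\[
|a_k(x)|+|\widehat{a}_k(x)|\lesssim k^{3/4}\log^3(1+k)\bigl(e^{-c_1|x|^2/k}\mathbbm 1_{|x|\le Ck}+e^{-c_1|x|}\mathbbm 1_{|x|>Ck}\bigr),
\]
I would multiply by $e^{h|x|}$ and optimise the exponent $h|x|-c_1|x|^2/k$ on the Gaussian region, where the maximum equals $h^2k/(4c_1)$ (attained at $|x|=hk/(2c_1)$); the exponentially decaying tail remains controlled as long as $h<c_1$. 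Combining both regions, together with the symmetric estimate for $\widehat{a}_k$, yields
\[
\|a_k\|_{S^1_{1,h}}+\|\widehat{a}_k\|_{S^1_{1,h}}\lesssim k^{3/4}\log^3(1+k)\,e^{h^2 k/(4c_1)}.
\]

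The final step assembles the two ingredients. Exactly as in the proof of Theorem \ref{thm : entire}, the perturbed basis functions $h_n,g_n$ coming from Theorem \ref{thm : pertSch} admit an explicit expansion
\[
h_n(x)=\sum_k\bigl(\gamma_{n,k}a_k(x)+\tilde{\gamma}_{n,k}\widehat{a}_k(x)\bigr),
\]
and analogously for $g_n$ with a symmetric formula. Applying the triangle inequality for the Gelfand--Shilov seminorm and inserting the two previous estimates yields
\[
\|h_n\|_{S^1_{1,h}}\lesssim e^{\alpha n}\sum_{k\ge 0}k^{3/4}\log^3(1+k)\,e^{-(\alpha-h^2/(4c_1))k},
\]
which is finite as soon as $h<2\sqrt{\alpha c_1}$. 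Choosing, for instance, $h:=\sqrt{\alpha c_1}$ then places $h_n,g_n$ in $S^1_{1,h}$ for every $n$, and the claimed analyticity on a strip is the standard Paley--Wiener characterisation of the Gelfand--Shilov class with the linear weight $\omega(x)=|x|$.

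The main obstacle in this scheme is the two-parameter trade-off that arises: on the one hand, $\alpha$ must be small enough for the Hilbert--Schmidt calculation to produce $T^{-1}$ on the weighted space (which in turn fixes how large $K$ has to be taken); on the other hand, the decay $e^{-\alpha k}$ of the matrix entries must strictly dominate the growth $e^{h^2k/(4c_1)}$ of the Gelfand--Shilov norms of the $a_k$. This enforces the coupling $h<2\sqrt{\alpha c_1}$ and explains why the width of the strip cannot be made uniform in the perturbation parameters. Since the theorem only asks for \emph{some} positive $h$, any admissible choice of $h$ suffices, and this completes the proof.
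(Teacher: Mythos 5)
Your proof is correct but follows a genuinely different route from the paper's. You import the sequence-space machinery developed for Theorem~\ref{thm : entire}: you invert $T$ on $\ell^2(e^{\alpha n},\N)\oplus\ell^2(e^{\alpha n},\N)$ via the Hilbert--Schmidt estimate, extract the exponential decay $|\gamma_{n,k}|,|\tilde\gamma_{n,k}|\lesssim e^{\alpha(n-k)}$ of the matrix entries of $T^{-1}$, bound $\|a_k\|_{S^1_{1,h}}\lesssim k^{3/4}\log^3(1+k)e^{h^2k/(4c_1)}$ by optimising the exponent over the Gaussian regime of the pointwise bound, and then assemble $\|h_n\|_{S^1_{1,h}}$ by triangle inequality, landing on the coupling constraint $h<2\sqrt{\alpha c_1}$. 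The paper instead stays entirely at the function-space level: it defines the synthesis operator $T(f)=\sum f(\sqrt{n+\varepsilon_n})a_n+\sum\widehat{f}(\sqrt{n+\varepsilon_n})\widehat{a}_n$ directly on $X=S^1_{1,h}$, uses continuity of $d/dx$ on $X$ to show $\|(I-T)f\|_X<(1-\delta)\|f\|_X$ (a Neumann-series contraction argument), deduces $h_n=T^{-1}(a_n)$, $g_n=T^{-1}(\widehat{a}_n)$ and transfers the norm bound $\|h_n\|_X\lesssim\|T^{-1}\|\|a_n\|_X$. The paper's route is shorter and more elementary, consistent with it being offered as the ``quicker'' counterpart to Theorem~\ref{thm : entire}; your route exposes more quantitative structure (explicit matrix decay, explicit strip width in terms of $\alpha$ and $c_1$) and makes transparent the two-parameter trade-off between the weight $\alpha$ and the strip height $h$. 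One small point you should be explicit about: identifying the $h_n,g_n$ from Theorem~\ref{thm : pertSch} with the series $\sum_k(\gamma_{n,k}a_k+\tilde\gamma_{n,k}\widehat{a}_k)$ uses the uniqueness of the perturbed basis expansion together with a Fubini--Tonelli interchange, which the paper points out explicitly in the proof of Theorem~\ref{thm : entire}.
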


\begin{proof} Recall that, 
thanks to Theorem \ref{thm : RS2est}, all the Radchenko-Viazovska functions belong to $X:=S_{1,h}^1$ for some absolute $h$; moreover, we have
\begin{equation*}
\|a_n\|_X\lesssim n^{\frac34}\log^3(1+n) e^{\tilde hn},
\end{equation*}
for some $\tilde h>h$. Then we can define the
operator $T$ on $X$ as
\begin{equation*}
T(f)=\sum f(\sqrt{n+\varepsilon_n})a_n(x)+\sum \widehat f(\sqrt{n+\varepsilon_n})\widehat a_n(x).
\end{equation*}
A priori, the operator is only well defined as an operator from $X$ to $\mathcal S$. We shall prove that $\|(I-T)f\|_X< (1-\delta) \|f\|_X$, for some $\delta > 0,$ thus 
proving that $T$ is an isomorphism. To do so, we first note that the operator $f\mapsto f'$ is continuous on $X$ (see \cite[Chapter~5.1]{Gelfand})
\footnote{A slightly weaker conclusion can be obtained by combining the results in \cite{RealPW} and Grothendieck's factorisation 
theorem, which proves the existence of $\widehat h(h)$ such that $\frac{d}{\diff x}: S^1_{1,h}\to S^{1}_{1,\widehat h}$ is 
continuous.}. 
Thus, assuming $\|f\|_X\le 1$, we have
\begin{align*}
\|Tf-f\|_X&\le \sum |f(\sqrt{n+\varepsilon_n})-f(\sqrt n)|\|a_n(x)\|+\sum  |\widehat f(\sqrt{n+\varepsilon_n})-\widehat f(\sqrt{n})|\|\widehat a_n(x)\|\\
&\lesssim \sum \frac{|\varepsilon_n|}{\sqrt n}n^{\frac34}\log^{3}(1+n)e^{\tilde hn}\\
&\lesssim \sum n^{\frac14}\log^{ 3}(1+n)e^{\tilde hn}e^{-Kn}c.
\end{align*}
Thus if $K>K_0:=\tilde h$ and $c$ is sufficiently small, we see that $T$ is continuous and invertible.
Now let $f\in S_{1,h'}^1$ with $h'$ sufficiently larger than $h$. Since $T$ is invertible in $X$, we
know that
\begin{equation*}
f=T^{-1}\left(\sum f(\sqrt{n+\varepsilon_n})a_n(x)+\sum \widehat f(\sqrt{n+\varepsilon_n})\widehat a_n(x)\right)
\end{equation*}
Since $h'$ is sufficiently large, we can assume that the series in the right-hand side converges absolutely in $X$, hence we can switch $T^{-1}$ and summation to obtain
\begin{align*}
f&=\sum f(\sqrt{n+\varepsilon_n})T^{-1}(a_n(x))+\sum \widehat f(\sqrt{n+\varepsilon_n})T^{-1}(\widehat a_n(x))\\&=
\sum f(\sqrt{n+\varepsilon_n})h_n(x)+\sum \widehat f(\sqrt{n+\varepsilon_n})g_n(x),
\end{align*}
where $h_n:=T^{-1}(a_n)$ and $g_n:=T^{-1}(\widehat a_n)$. By boundedness of $T^{-1}$ and Theorem \ref{thm : RS2est}, we have
\begin{equation*}
\|h_n(x)\|_X\lesssim n^{\frac34}\log^{\frac 32}(1+n)e^{\frac{h}{4}n},
\end{equation*}
and similarly for $g_n$, which implies that for any $f\in\mathcal S$ the expression
\begin{equation*}
\mathbb{I}(f):=\sum f(\sqrt{n+\varepsilon_n})h_n(x)+\sum \widehat f(\sqrt{n+\varepsilon_n})g_n(x)
\end{equation*}
converges absolutely in $\mathcal S$, thus $\mathbb{I}$ is a continuous operator which coincides with the identity on $S_{1,h'}^1$, a dense subset of $\mathcal 
S$, so $\mathbb{I}=\text{Id}$. This proves the desired regularity of the perturbed interpolating functions.
\end{proof}

\end{remark}
\begin{remark}
These theorems lead one to wonder whether the Radchenko-Viazovska basis is indeed a Schauder basis in $S_1^1$; this would, in some sense, be the best possible result, as Proposition \ref{prop : bound-decay-b-n} implies that one cannot hope for anything better than exponential decay of a \emph{fixed order} for $b_n^{\pm}$. Achieving this result would require more refined techniques, as the known decay estimates are insufficient to prove it. Therefore, this question lies outside the scope of this manuscript but remains a subject we wish to investigate in the future.
\end{remark}
\subsection{An application to global analyticity} We first recall the following result from \cite{RS3}: 

\begin{proposition}\label{prop : RS-gauss} Let $\{x_n\}_n$ and $\{y_n\}_n$ be two sequences satisfying \eqref{eq : p-q-sep} for $p=q=2$. Suppose that $f:\R \to \C$ is in $H^1 \cap \mathcal{F}(H^1)$ and satisfies that 
\[
|f(x_i)| \le e^{-a |x_i|^2}, \, \forall i, \quad |\widehat{f}(y_j)| \le e^{-a|y_j|^2}, \, \forall j.
\]
Then there is $c_a>0$ such that 
\[
|f(x)| + |\widehat{f}(x)| \le e^{-c_a|x|^2}. 
\]
\end{proposition}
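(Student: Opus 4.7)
The plan is to prove the proposition in two stages: first, promote $f$ to a Schwartz function via the KNS frame inequality, and second, upgrade the discrete Gaussian decay to global Gaussian decay by comparing $f$ with an analytic interpolation basis.

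The first stage is straightforward. Applying Theorem \ref{thm : KNS3} with $p=q=2$ and arbitrary $s>0$, the right-hand side of the frame inequality is bounded by $\sum_n (1+|x_n|)^{4s-1} e^{-2a|x_n|^2} + \sum_n (1+|y_n|)^{4s-1} e^{-2a|y_n|^2}$. Since $(2,2)$-separation forces $|x_n|,|y_n| \gtrsim \sqrt{n}$, both sums converge for every $s>0$, so that $f \in \bigcap_{s>0} \mathcal{H}^s_{2,2} = \mathcal{S}$.

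For the second stage, I would argue that under the hypotheses --- possibly strengthened so as to fit the framework of Section \ref{sec : schw-int}, which is part of the ``technical conditions'' alluded to in the statement --- the sequences $(\{x_n\},\{y_n\})$ induce a strong Schauder frame for $\mathcal{S}$ with basis functions $\{h_n\},\{g_n\}$ satisfying $f = \sum f(x_n) h_n + \sum \widehat{f}(y_n) g_n$ with convergence in $\mathcal{S}$. Following the strategy of Theorem \ref{thm : entire}, one can choose $h_n, g_n$ to be entire of order $2$ with growth controlled along the lines of Proposition \ref{prop : growth-entire-basis}; combined with the Gaussian decay of the sampling values and $|x_n|,|y_n| \gtrsim \sqrt{n}$, this produces an entire extension of $f$ satisfying $|f(z)| \lesssim e^{\pi |z|^2}$.

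To conclude, I would consider $F(z) := f(z) e^{b z^2}$ for some $0 < b < a$. Then $F$ is entire of order $2$, and $|F(x_n)| \le e^{(b-a)x_n^2}$ tends to zero rapidly. A Phragmen--Lindel\"of argument in horizontal strips --- or, equivalently, a reapplication of the KNS frame inequality to $F$ in a weighted Sobolev space --- yields $|F(x)| \lesssim 1$ on $\R$, i.e., $|f(x)| \lesssim e^{-b|x|^2}$. A symmetric argument based on the Fourier-side hypothesis gives the analogous bound for $\widehat{f}$, and setting $c_a := b$ completes the proof.

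The main obstacle is the second stage, specifically extending the generating-function analysis of Proposition \ref{prop : growth-entire-basis} to the basis functions associated with a general $(2,2)$-supercritical separated pair. For the perturbative Radchenko--Viazovska setup of Theorem \ref{thm : entire}, this is directly available; for arbitrary sequences, one either has to redo the modular forms argument from scratch --- which seems difficult without an analogous generating function identity --- or to reduce to a suitable subsequence in the spirit of Proposition \ref{prop : preKNSsol} and then transfer the conclusion back via the uniqueness guaranteed by Stage 1. Tracking the dependence of $c_a$ on $a$ through the Phragmen--Lindel\"of step is also delicate, since the bound $|f(z)| \lesssim e^{\pi |z|^2}$ from Stage 2 has an $a$-independent exponent, so a careful interplay between the constants in the growth estimate and those in the discrete decay is required.
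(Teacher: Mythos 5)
Note that Proposition~\ref{prop : RS-gauss} is recalled from \cite{RS3} and is not proved in this paper, so your proposal must be assessed on its own. Your Stage~1 (bootstrapping $f$ from $H^1\cap\mathcal{F}(H^1)$ into $\mathcal{S}$ via Theorem~\ref{thm : KNS3}) is sound in outline, modulo the usual care in extending the frame inequality from $\mathcal{S}$ to $\mathcal{H}^{1/2}_{2,2}$ and invoking the supercritical uniqueness to identify limits at each step of the iteration.

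Stage~2, however, has a load-bearing gap that you correctly flag but cannot bridge. The hypothesis is only the $(2,2)$-supercritical separation condition \eqref{eq : p-q-sep}; there is no assumption that the nodes are perturbations of $\sqrt{\N}$, let alone exponentially small ones. Theorem~\ref{thm : entire} and Proposition~\ref{prop : growth-entire-basis} produce entire, order-$2$, type-$\pi$ basis functions \emph{only} in the exponentially perturbed Radchenko--Viazovska setting, and they rely on the explicit modular generating-function identities \eqref{eq : generating-g_n}, which have no analogue for generic $(2,2)$-supercritical sequences. Reducing to a subsequence à la Proposition~\ref{prop : preKNSsol} does not rescue this: that construction yields nodes that are \emph{polynomially} close to $\sqrt{\N}$, strictly weaker than the exponential closeness Theorem~\ref{thm : entire} needs. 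Moreover, even granting an entire extension with $|f(z)|\lesssim e^{\pi|z|^2}$, the proposed endgame via $F(z)=f(z)e^{bz^2}$ does not close: Phragmén--Lindelöf in strips needs bounds on the \emph{boundary lines} of the strip, and you only control $F$ at the isolated nodes $x_n\subset\R$; and the alternative of ``reapplying the KNS frame inequality to $F$'' is not available, because $\widehat{F}$ restricted to $\R$ is a Gaussian \emph{twist} of $\widehat{f}$, not a pointwise weighting of it, so the hypothesis $|\widehat f(y_j)|\le e^{-a|y_j|^2}$ gives no handle whatsoever on $\widehat F(y_j)$. This is precisely the obstruction that forces one (as in \cite{RS3}) to work directly in a Gaussian-weighted framework rather than conjugating by $e^{bz^2}$ a posteriori.
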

In particular, the Gaussian decay asserted in Proposition \ref{prop : RS-gauss} directly implies that \emph{both $f$ and $\widehat{f}$ are entire of order $2$} (see \cite[Theorems~2.5.12~and~2.5.13]{Boas}). It is hence natural to wonder whether this fact is preserved under \emph{general} sequences, not necessarily satisfying the supercritical condition \eqref{eq : p-q-sep}. The following result answers this question positively: 

\begin{theorem}\label{thm : hardy-entire} Suppose that $\{x_n\}_{n \in \Z} = \{\text{sign}(n)\cdot \sqrt{|n|+\varepsilon_{n}}\}_{n \in \Z}, \{y_n\}_{n \in \Z} = \{ \text{sign}(n)\sqrt{|n|+\delta_n}\}_{n \in \Z},$ where 
\[
\sup_n e^{Cn}\left( |\varepsilon_n| + |\delta_n| \right) \le c,
\]
for $c > 0$ sufficiently small. Then, there is $\alpha_0 >0$ such that, if $f \in \mathcal{S}$ is such that 
\[
|f(x_i)| \le e^{-\alpha |x_i|^2}, \, \forall i, \quad |\widehat{f}(y_j)| \le e^{-\alpha|y_j|^2}, \, \forall j,
\]
with $\alpha \ge \alpha_0$, then $f$ and $\widehat{f}$ are both \emph{entire} of order 2. 
\end{theorem}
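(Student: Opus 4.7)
The plan is to run the perturbed Radchenko--Viazovska interpolation machinery using the entire basis functions produced by Theorem \ref{thm : entire}. First, I would decompose $f = f_+ + f_-$ into its even and odd parts; since both $\{x_n\}_{n\in\Z}$ and $\{y_n\}_{n\in\Z}$ are symmetric sequences, the pointwise Gaussian bounds on $f(x_n)$ and $\widehat f(y_n)$ transfer, up to a factor of $2$, to $f_\pm$ and $\widehat{f_\pm}$ at the same samples, so it suffices to treat each parity class separately. For $f_+$ one would invoke the even perturbed interpolation formula from Theorem \ref{thm : pertSch}, upgraded to the double-perturbation setting described in Remark \ref{rmk : doublepert}, and for $f_-$ the odd perturbed formula from Theorem \ref{thm : pertSch-odd}, again in its double-perturbation version.

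The crucial input is that, under the exponential decay hypothesis on $\varepsilon_n$ and $\delta_n$, a verbatim repetition of the proof of Theorem \ref{thm : entire} (with the only novelties being the handling of two independent exponential weights and of the odd basis functions $d_n^\pm$, for which the analogue of Proposition \ref{prop : growth-entire-basis} holds by the remark following its proof) shows that the perturbed basis functions $\tilde h_n, \tilde g_n$ are entire of order $2$ and type $\pi$, together with the explicit growth bound
\begin{equation*}
\|\tilde h_n\|_{L^\infty(K)} + \|\tilde g_n\|_{L^\infty(K)} \;\le\; D^n\,\sup_{z\in K} e^{\pi|z|^2},
\end{equation*}
for some absolute constant $D>0$ and every compact $K\subset\C$. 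Since $|x_n|^2 = n + \varepsilon_n \ge n/2$ for $n$ large, we then have $|f(x_n)|\le e^{-\alpha n/2}$, and analogously for $\widehat f(y_n)$. Taking $\alpha_0 > 2\log D$ and $\alpha\ge\alpha_0$ makes the right-hand side of the perturbed interpolation identity an absolutely convergent sum in $L^\infty(K)$ for every compact $K\subset\C$, exhibiting $f_\pm$ as the restriction to $\R$ of an entire function of order at most $2$ via the pointwise bound $|f_\pm(z)|\lesssim_\alpha e^{\pi|z|^2}$. For $\widehat f$, I would Fourier-transform both sides of the interpolation identity and invoke the same argument, noting that $\widehat{\tilde h_n},\widehat{\tilde g_n}$ are again entire of order $2$ and type $\pi$ with the same $D^n$ bound, by the built-in symmetry between space and frequency in the Radchenko--Viazovska construction.

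The main technical obstacle is verifying that Theorem \ref{thm : entire} and its underlying growth estimates extend cleanly to the combined even-and-odd, double-perturbation setting. However, since the Hilbert--Schmidt estimate driving the proof of Theorem \ref{thm : entire} decouples in the space and frequency perturbations, and since the odd basis $\{d_n^\pm\}$ satisfies the same exponential growth bound as $\{b_n^\pm\}$ (up to an inessential factor of $|z|$, which does not affect the order), this amounts to a routine adaptation rather than a genuinely new difficulty. A secondary, purely bookkeeping, point is handling the low-index perturbation $(\varepsilon_0,\delta_0)$ through Corollary \ref{cor : pertframe}, exactly as was done in Step $2$ of the proof of Theorem \ref{thm : pertSch}.
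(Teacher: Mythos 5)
Your proposal follows the same route as the paper: run the perturbed interpolation formula $f(z)=\sum_{n} f(x_n)h_n(z)+\widehat f(y_n)g_n(z)$ with the entire basis functions supplied by Theorem~\ref{thm : entire}, use the $C^n$ growth bound from Proposition~\ref{prop : growth-entire-basis} (propagated to the perturbed basis via \eqref{eq : bound-sum-analytic}), and balance it against the Gaussian decay of the samples $|f(x_n)|\le e^{-\alpha|x_n|^2}\lesssim e^{-\alpha n/2}$ to obtain absolute convergence on compact subsets of $\C$ and hence analyticity of order $2$; the same argument then gives the result for $\widehat f$.

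You are, if anything, more careful than the paper's rather terse argument, which simply invokes Theorem~\ref{thm : pertSch} (stated for even functions with a single perturbing sequence) and Theorem~\ref{thm : entire}, and does not explicitly address either the even/odd decomposition needed for general $f\in\mathcal S$ or the fact that $\{x_n\}$ and $\{y_n\}$ carry two \emph{different} exponential perturbations $\varepsilon_n,\delta_n$. You correctly point to the odd perturbed formula (Theorem~\ref{thm : pertSch-odd} with the $d_n^{\pm}$ growth bound) and to Remark~\ref{rmk : doublepert} for the double-perturbation extension, and you note that the Hilbert--Schmidt estimate decouples in the space and frequency perturbations so that the verbatim rerun of the proof of Theorem~\ref{thm : entire} goes through. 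That is exactly the bookkeeping the paper leaves implicit, and your threshold $\alpha_0$ (differing from the paper's by the harmless factor of $2$ coming from $|x_n|^2\ge n/2$) is consistent with the paper's $\sum e^{(c-\alpha)n}<\infty$.
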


\begin{proof} By Theorem \ref{thm : pertSch} - or, alternatively, by the main theorems in \cite{RS2} -, it follows that
\[
f(t) = \sum_{n \ge 0} \left( f(x_n) h_n(t) + \widehat{f}(y_n) g_n(t) \right)
\]
holds in the absolute sense for all $t \in \R$. Hence, if we define for $z \in \C$ fixed,
\begin{equation}\label{eq : complex-interpol} 
f(z) = \sum_{n \ge 0} \left( f(x_n) h_n(z) + \widehat{f}(y_n) g_n(z)\right),
\end{equation} 
it follows by Theorem \ref{thm : entire}, the assumptions of Theorem \ref{thm : hardy-entire} and \eqref{eq : bound-sum-analytic} that \eqref{eq : complex-interpol} actually converges absolutely for each $z \in \C$. Moreover, using \eqref{eq : bound-sum-analytic} once more shows that 
\[
\sup_{z \in K} |f(z)| \lesssim \sup_{z \in K}e^{\pi |z|^2} \sum_{n\ge 0} e^{(c-a)n} \lesssim  \sup_{z \in K} e^{\pi |z|^2},
\]
showing thus that $f$ is entire of order 2. Since the exact same argument may be applied to $\widehat{f}$, this concludes the proof of Theorem \ref{thm : hardy-entire}. 
\end{proof}

It is not clear from the proof above whether the restriction $\alpha \ge \alpha_0$ is necessary. The investigation of the necessity of such a condition seems like an interesting problem, which we wish to explore in future work - as well as, in case such a necessity is confirmed, the determination of the \emph{optimal} $\alpha_0$ for which Theorem \ref{thm : hardy-entire} holds. 

\section{Comments and remarks}\label{sec : remarks}

\subsection{Theorem \ref{thm : pertSch} and the endpoint case of Theorem \ref{thm : per}} As previously noted, Theorem \ref{thm : per} admits perturbations that are slightly more general than those in Theorem \ref{thm : pertSch}. Indeed, in \cite{RS2}, the authors also prove that, if $T_\varepsilon$ denotes the operator from Theorem \ref{thm : pertSch} associated to a sequence $\varepsilon:= \{\varepsilon_n\}_n$, then
$$\|I-T_\varepsilon\|_{\mathcal L}\le c_s \|\varepsilon\|,$$
 where 
 \begin{equation*}
\|\varepsilon\|:=\sup_n |\varepsilon_n|(1+n)^{\frac54}\log^3(e+n).
\end{equation*}
This allows one to obtain other perturbation formulas with a slightly weaker decay requirement, which is obtained at the expense
of convergence in $\ell_s^2$ rather than in $\mathfrak s$. 

Our technique for extending the result to the limit $s\to \infty$ does not work directly here, since having bounded operator norm does not imply compactness. However, we can still obtain some partial results in this direction:
\begin{proposition}\label{prop : improvedSchwartzconditional}
Assume there exists $c$ such that, for all $s$, for every perturbation $\{\varepsilon_n\}_{n=1}^\infty$ with $\|\varepsilon\|<c$ we have
that $T_\varepsilon:\ell_s^2\oplus \ell_s^2\to \ell_s^2\oplus \ell_s^2$ is a closed-range operator
and $T_\varepsilon :\mathfrak s\oplus \mathfrak s\to \mathfrak s\oplus \mathfrak s$ is as well. Then 
for any such perturbation $\varepsilon$ the conclusion of Theorem \ref{thm : pertSch} holds.
\end{proposition}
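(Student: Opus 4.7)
The plan is to adapt \textbf{Step 1} of the proof of Theorem \ref{thm : pertSch}, replacing the Hilbert--Schmidt/compactness argument (which required decay strictly better than $n^{-5/4}$) by the operator-norm bound $\|I-T_\varepsilon\|_{\mathcal L(\ell_s^2\oplus\ell_s^2)} \le c_s\|\varepsilon\|$ of \cite{RS2} at a \emph{single} reference level, combined with the regularising property of $T_\varepsilon$ recorded in the Remark following Proposition \ref{thm : Fredholm}. The goal is to promote $T_\varepsilon$ to a topological automorphism of the Fréchet space $\mathfrak s\oplus\mathfrak s$; once this is in place, the rest of the proof of Theorem \ref{thm : pertSch} applies verbatim, producing the perturbed basis and the claimed absolutely convergent Schwartz expansion by inverting $T_\varepsilon$ on the Radchenko--Viazovska coefficient side.

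First, I would fix a reference level $s_*$ for which $c_{s_*}\cdot c<1$ (shrinking $c$ further if needed, which is harmless since the closed-range hypothesis is monotone in $c$) and extract, via the standard Neumann series in $\mathcal L(\ell_{s_*}^2\oplus\ell_{s_*}^2)$, a bounded two-sided inverse $R$ of $T_\varepsilon$ at that particular level. I would then propagate invertibility \emph{up} the scale: for every $s\ge s_*$, injectivity of $T_\varepsilon:\ell_s^2\oplus\ell_s^2\to\ell_s^2\oplus\ell_s^2$ is immediate from the continuous embedding $\ell_s^2\oplus\ell_s^2\hookrightarrow\ell_{s_*}^2\oplus\ell_{s_*}^2$; for surjectivity, given $y\in\ell_s^2\oplus\ell_s^2$ the element $x:=R(y)\in\ell_{s_*}^2\oplus\ell_{s_*}^2$ satisfies $T_\varepsilon x=y\in\ell_s^2\oplus\ell_s^2$, so the regularising property forces $x\in\ell_s^2\oplus\ell_s^2$. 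The open mapping theorem then delivers a topological isomorphism at each level $s\ge s_*$.

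Passing to the projective limit $\mathfrak s\oplus\mathfrak s=\lim_{\infty\leftarrow s}\ell_s^2\oplus\ell_s^2$, the levelwise bijections together with the regularising property yield a continuous bijection $T_\varepsilon:\mathfrak s\oplus\mathfrak s\to\mathfrak s\oplus\mathfrak s$. Here the hypothesis that $T_\varepsilon$ is closed-range also on $\mathfrak s\oplus\mathfrak s$, combined with the open mapping theorem for Fréchet spaces, guarantees continuity of the inverse, thus giving the topological automorphism. Composing $T_\varepsilon^{-1}$ with the Radchenko--Viazovska expansion, exactly as in the closing lines of \textbf{Step 1} of Theorem \ref{thm : pertSch}, produces the basis functions $\{h_n\}\cup\{g_n\}$, and absolute convergence of the interpolation formula in $\mathcal S_{\text{even}}$ follows from the nuclearity of the Schwartz space; uniqueness of the decomposition is inherited from the injectivity on $\mathfrak s\oplus\mathfrak s$.

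The main obstacle I expect is in the surjectivity step: one must verify that the regularising property holds with enough uniformity across the scale, i.e.\ that $T_\varepsilon-I$ is in fact an $sc^+$-operator in the sense of the discussion following Proposition \ref{thm : Fredholm}. This is implicit in the norm bounds of \cite{RS2} but is not stated there at the level of regularity one needs here, so a careful bookkeeping of how the operator-norm estimate $c_s\|\varepsilon\|$ depends on $s$---and in particular the existence of a uniform ``gain of regularity'' $\delta>0$ so that $T_\varepsilon-I$ maps $\ell_s^2\oplus\ell_s^2$ into $\ell_{s+\delta}^2\oplus\ell_{s+\delta}^2$---is the key technical point. Once this is secured, the closed-range hypothesis plays the natural role of ensuring that the Fréchet-level inverse is continuous, which is otherwise not automatic from levelwise invertibility alone.
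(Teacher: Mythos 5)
Your plan has a genuine gap that undermines the point of the proposition: shrinking $c$ so that the Neumann series converges at some reference level $s_*$ is \emph{not} harmless. The proposition asserts the conclusion for every $\varepsilon$ with $\|\varepsilon\|<c$, where $c$ is the threshold at which the closed-range hypothesis becomes available; by passing to $c'<\min(c,1/c_{s_*})$ you prove the statement only for a strictly smaller ball, which is essentially the perturbative regime already covered (unconditionally) by the compactness argument of Theorem~\ref{thm : pertSch}. Indeed, if you can run a Neumann series at a single level $s_*$, then $T_\varepsilon$ is automatically invertible there, hence automatically has closed range, and the closed-range hypothesis on $\ell^2_s\oplus\ell^2_s$ is never doing any work in your argument. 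The entire purpose of the extra closed-range assumption is to reach perturbations \emph{beyond} the radius of convergence of any Neumann series.

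The paper avoids the Neumann series altogether via a connectedness/degree argument. It observes that $\varepsilon\mapsto T_\varepsilon$ is a continuous (Lipschitz) map from the Banach ball $B_c(0)$ into $\mathcal L(\ell^2_s\oplus\ell^2_s)$; that for all $\|\varepsilon\|<c$ the operator $T_\varepsilon$ is injective at every level (from \cite{RS2}, with a threshold uniform in $s$) and has closed range (hypothesis), so that the whole image $\varphi(B_c(0))$ sits inside $\Phi^+$; that $\partial\Phi\cap\Phi^{\pm}=\emptyset$ so a connected subset of $\Phi^+$ containing $T_0=I\in\Phi$ cannot meet the boundary of $\Phi$, forcing $\varphi(B_c(0))\subset\Phi$; and finally that local constancy of the index along the connected set $\varphi(B_c(0))$ pins $\mathrm{ind}(T_\varepsilon)=\mathrm{ind}(I)=0$. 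Only then does it invoke Proposition~\ref{thm : Fredholm} (sc-Fredholm = Fredholm at level $0$ + regularising) to pass to $\mathfrak s\oplus\mathfrak s$. No shrinking of $c$ occurs anywhere, and the closed-range hypothesis is used precisely to replace the missing norm bound $\|I-T_\varepsilon\|<1$. You should replace the Neumann-series bootstrap with this topological argument; your concern about the regularising/$sc^+$ bookkeeping is real but secondary, and in the paper it is discharged by the Remark following Proposition~\ref{thm : Fredholm} citing \cite{KNS} and \cite{RS2}.
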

\begin{proof}
Let $\varphi:X\to \mathcal L(\ell_s^2\oplus \ell_s^2)$ be the map that sends a sequence $\varepsilon_1,\dots, \varepsilon_n,\dots$
to the associated perturbation operator $T_\varepsilon$, where $X:=\{(x_n):\sup |x_n|(1+n)^{\frac54}\log^3(e+n).<\infty\}$ is
the Banach space equipped with the natural norm. By the results of \cite[Section~4]{RS2} we know that $\varphi$ is a well-defined Lipschitz map (for $s$ large enough, where the Lipschitz constant depends on $s$). Moreover, we know that
there exists $c$ such that 
\begin{equation*}
\varphi(B_c(0))\subset \bigcap_{s\gg 1} \mathcal L_\text{inj}(\ell_s^2\oplus \ell_s^2).
\end{equation*}
This, together with our hypothesis of closed-range, implies
\begin{equation*}
\varphi(B_c(0))\subset \bigcap_{s\gg 1}  \Phi^+(\ell_s^2\oplus \ell_s^2).
\end{equation*}
Since for any Banach space $X$, $\partial \Phi\cap \Phi^{\pm}=\emptyset$ (\cite{muller}, lemma 18.1), it follows that
\begin{equation*}
\varphi(B_c(0))\subset \bigcap_{s\gg 1} \Phi(\ell_s^2\oplus \ell_s^2).
\end{equation*}
Since $B_c(0)$ is connected and $\varphi$ is continuous, it follows that $\text{ind}_{\ell_s^2\oplus \ell_s^2}(\varphi(B_c(0)))=\{0\}$.
Theorem \ref{thm : Fredholm} thus implies that for any $\varepsilon\in B_c(0)$, $T_\varepsilon$ is Fredholm of index $0$,
thus an isomorphism. The result follows.
\end{proof}

\subsection{Conjecture \ref{conj:KNS-frame} and topological aspects of Complementedness} One could also wonder whether there is a purely topological proof of the complementedness (or non-complementeddness) of the range:
indeed, this is not unheard of in analysis, as there are various topological proofs of analytical results that follow this route. Two examples are (see \cite[Theorem 5.18, Example 5.19]{Rudin})
\begin{itemize}
\item The boundedness of the Szegő  $Q:L^1\to L^1$ is equivalent to the complementedness of $H^1$ in $L^1$, 
which is impossible since $H^1$ does not have the Dunford-Pettis property.
\item Similarly, $Q:L^p\to L^p$ is bounded if and only if $H^p$ is complemented in $L^p$ (which is clearly true)\footnote{We sketch here a proof of the equivalence claimed:
clearly if the Szegő projection were continuous then $H^p$ would be complemented in $L^p$ as $Q$ would be the required projection. On the other hand, assume $H^p$ is complemented, and thus such a projection $P:L^p\to H^p$ exists. Then we can define another projection
$\tilde P:\tilde P(f)=\frac1{2\pi}\int_{\mathbb T}\tau_{-\vartheta}P(\tau_\vartheta(f))\diff \vartheta$,where $\tau_z$ is the right translation
operator. It follows that $\tilde P$ is rotationally invariant and one can prove that $\tilde P=Q$ must hold.}
\end{itemize}
Indeed, in \cite[Theorem~5.3,~Corollary~5.4]{FFS} it is claimed, without proof, that more generally the range of many $\mathfrak s-$ frames is
necessarily complemented, implying in particular our conjecture:
\begin{corollary}[\cite{FFS}]
Let $X_s$ be a family of Hilbert spaces such that 
\begin{align*}
\emptyset\neq \bigcap_{s\in\mathbb N} X_s\subset\dots \subset X_2\subset X_1\subset X_0\\
\|\cdot\|_0\le \|\cdot\|_1\le \|\cdot\|_2\le \dots\\
X:=\bigcap X_s\text{ is dense in each }X_s.
\end{align*}
Let $\{g_i\}\in (X')^{\mathbb N}$ and assume that for every $s$ there exit constant $0<A_s\le B_s<\infty$ such that for all $f\in X$
\begin{equation*}
A_s\|f\|_s^2\le \sum |i^s g_i(f)|^2\le B_s \|f\|_s^2.
\end{equation*}
Then the following are equivalent
\begin{enumerate}
\item There exists a family $\{f_i\}\in X^{\mathbb N}$ such that for every $s\in \mathbb N$ one has
\begin{equation*}
\exists 0<A_s'\le B_s'<\infty:\ \tilde A_s' \|g\|_{X_s'}^2\le \sum |i^{-s}g(f_i)|^2\le B_s' \|g\|_{X_s'}
\end{equation*}
and
\begin{align*}
\forall f\in X, f=\sum g_i(f)f_i\\
\forall g\in X', g=\sum g(f_i)g_i.
\end{align*}
Moreover, denoting by $g_i^s$ the extension of $g_i$ to $X_s$
\begin{align*}
\forall f\in X_s, f=\sum g_i^s(f)f_i\\
\forall g\in X_s', g=\sum g(f_i)g_i^s.
\end{align*}
\item The set $\{\{g_i(f)\}:f\in X\}$ is nuclear and has the properties $(DN)$ and $(\Omega)$.
\item The set $\{\{g_i(f)\}:f\in X\}$ is complemented in $\mathfrak s$.
\end{enumerate}
\end{corollary}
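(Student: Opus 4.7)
The plan is to split the three-way equivalence into a chain $(1)\Rightarrow(3)\Rightarrow(2)\Rightarrow(1)$, exploiting a different structural ingredient at each step. Throughout, let $T\colon X\to \mathfrak{s}$ denote the analysis operator $f\mapsto (g_i(f))_i$: the two-sided frame inequalities at every level $s$ guarantee that $T$ is a topological embedding $X_s\hookrightarrow \ell^2_s$, which in the projective limit makes $T$ an isomorphism of $X$ onto $T(X)$.

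For $(1)\Rightarrow(3)$, I would imitate the argument already used in Proposition \ref{prop : strfr}: given the synthesis family $\{f_i\}$ from (1), define $P\colon \mathfrak{s}\to T(X)$ by $P((c_i)_i)=T\bigl(\sum_i c_i f_i\bigr)$. The upper bounds $\sum|i^{-s}g(f_i)|^2\le B_s'\|g\|^2$ guarantee that for every $s$ the synthesis sum converges absolutely in $X_s$, so $P$ is continuous; the reconstruction identity $f=\sum g_i(f)f_i$ forces $P\circ T=T$, i.e.\ $P$ is a continuous projection onto $T(X)$.

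For $(3)\Rightarrow(2)$, the key tool is the structure theorem of Vogt--Wagner which characterises the complemented subspaces of $\mathfrak{s}$ as exactly the nuclear Fréchet spaces satisfying the topological invariants $(DN)$ and $(\Omega)$. A complemented subspace of $\mathfrak{s}$ is closed and inherits both nuclearity and $(DN)$, $(\Omega)$ from $\mathfrak{s}$; thus $T(X)$ has all three properties. For $(2)\Rightarrow(3)$, the Vogt--Wagner theorem runs in reverse: the abstract isomorphism $X\simeq T(X)$ combined with $(DN)$, $(\Omega)$ and nuclearity produces a projection $\pi\colon \mathfrak{s}\to T(X)$. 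Defining $f_i:=T^{-1}(\pi(e_i))$, where $\{e_i\}$ is the canonical basis of $\mathfrak{s}$, and pushing the canonical expansion in $\mathfrak{s}$ through $\pi\circ T^{-1}$ yields the reconstruction formulas at each level $X_s$, while the frame-type bounds on $\{f_i\}$ follow from continuity of $\pi$ and $T^{-1}$ scale by scale.

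The main obstacle is clearly the invocation of the Vogt--Wagner splitting theorem, which is a deep piece of the structure theory of nuclear Fréchet spaces and is doing essentially all the geometric work in the equivalence $(2)\Leftrightarrow(3)$. Without it, one would have to construct the projection directly out of the topological invariants, which amounts to reproving a substantial fragment of Vogt's classification in the present setting. The two steps involving (1) are, in contrast, routine book-keeping once the frame inequalities are used to transfer everything to the sequence-space side via $T$.
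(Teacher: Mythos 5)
The statement you were asked to prove is \emph{not} established in the paper -- quite the opposite. The paper states this corollary only to record what is claimed \emph{without proof} in \cite{FFS}, and the sentence immediately following it reads ``Unfortunately, their claim is mistaken.'' The remainder of the section is devoted to constructing a counterexample showing that the equivalence between items $(2)$ and $(3)$ fails. Your proposal therefore purports to prove something the paper explicitly refutes, and the gap in your argument is precisely the same phenomenon the paper's counterexample exploits.

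The flaw sits in your step $(2)\Rightarrow(3)$. You invoke a Vogt--Wagner-type structure theorem to ``run in reverse'' and manufacture a projection $\pi\colon\mathfrak s\to T(X)$ directly from the knowledge that $T(X)$ is nuclear and has $(DN)$ and $(\Omega)$. But the structure theorem only says that such a Fr\'echet space is \emph{abstractly isomorphic to} a complemented subspace of $\mathfrak s$; it says nothing about whether the \emph{particular} closed copy of it sitting inside $\mathfrak s$ is complemented. Complementedness is a property of the embedding, not of the isomorphism class. The splitting theorems of Vogt that could rescue your argument require the \emph{quotient} $\mathfrak s / T(X)$ to have $(DN)$, and this can fail. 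The paper makes this concrete via the Borel exact sequence $0\to\mathfrak s\to\mathfrak s\to\mathbb C^{\mathbb N}\to 0$, which does not split because $\mathbb C^{\mathbb N}$ lacks $(DN)$; its kernel is a closed subspace of $\mathfrak s$ isomorphic to $\mathfrak s$ itself (hence nuclear with $(DN)$, $(\Omega)$) yet not complemented. The paper then instantiates this with $X=\mathcal D([0,2\pi])$ and $g_i$ the Fourier-coefficient evaluations: the analysis image $G(\mathcal D)$ is isomorphic to $\mathfrak s$, so it satisfies item $(2)$, but sits inside $\mathfrak s$ as the kernel of the Borel map $f\mapsto(f^{(n)}(0))_n$ restricted to periodic smooth functions, and hence is not complemented, violating item $(3)$.

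Your steps $(1)\Rightarrow(3)$ and $(3)\Rightarrow(2)$ are in line with the correct part of the picture -- indeed the paper's Proposition~\ref{prop : strfr} carries out essentially your $(1)\Leftrightarrow(3)$ argument, and $(3)\Rightarrow(2)$ is the legitimate direction of the structure theorem. But the purported three-way equivalence is false as a whole, so no proof of it exists; you should have been alerted by the fact that the paper cites the result and then immediately disproves it rather than proving it.
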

Unfortunately, their claim is mistaken. Indeed, 
 it is impossible to prove the complementedness from a purely topological point of view. More precisely, consider the following argument: by Borel's theorem \cite[Theorem~26.29]{Voigt}  there exists an exact sequence
\begin{equation*}
0\hookrightarrow s\hookrightarrow s\rightarrow \mathbb C^{\mathbb N}\rightarrow 0
\end{equation*}
which fails to split since $\mathbb C^{\mathbb N}$ lacks the $DN$ property (see \cite[Section~5.2]{Derived}). In other words, there is a subspace $X$ of 
$\mathfrak s$  that is isomorphic to it and yet is not complemented. 
This also provides a counterexample to \cite[Corollary~5.4]{FFS} (which, if true, would have solved our conjecture):
consider the map $g_i: \mathcal D([0,2\pi])\to  \mathfrak s, g_{2i}(f):=\widehat f(i), g_{2i-1}=\widehat f(-i)$. Then it follows from standard results
in Fourier series that for $f\in H^s_0([0,2\pi])$ we have
\begin{equation*}
c_s\|f\|_{H^s}^2\le \sum |g_i(f)i^s|^s\le C_s \|f\|_{H^s}.
\end{equation*}
Then it is easy to see that $G:= (g_0,g_1, g_{-1}, g_2,g_{-2}\dots)$ is an embedding of $\mathcal D$ into $\mathfrak s$ and as such $G(\mathcal D)\simeq \mathcal 
D\simeq \mathfrak s$. To see that $\mathcal D\simeq \mathfrak s$,
 notice that $f\mapsto f\circ \arctan$ is an isomorphism between $\mathcal D$ and $\mathcal S$ and the 
latter is isomorphic to a complemented subset of $\mathcal D$ so it has properties $DN$ and $(\Omega)$. Moreover, we note that $G$ is an isomorphism 
between $C^\infty_p([0,2\pi])$ (the set of smooth functions on $[0,2\pi]$ that admit a periodic smooth extension to $\mathbb R$) and $\mathfrak 
s$. Despite having
properties $DN$ and $(\Omega)$, $G(D)$ is not complemented in $\mathfrak s$: indeed, we claim that we have the following exact sequence
\begin{equation*}
0\rightarrow \mathcal D\overset{G}{\rightarrow } \mathfrak s\xrightarrow{B\circ G^{-1}} \mathbb C^{\mathbb N}\rightarrow 0,
\end{equation*}
where $B$ is the Borel map $B:C^\infty([0,2\pi])\to \mathbb C^N; f\mapsto f^{(n)}(0)$ and that it does not split (see the commutative diagram
below), which follows by the same argument as above. 
\begin{figure}[ht]
\center
\begin{tikzcd}
                  & \mathfrak s \arrow[r, hook]                        & \mathfrak s \arrow[d, "G^{-1}", bend left]                                                       &                                       &   \\
0 \arrow[r, hook] & \mathcal D \arrow[r, "\iota", hook] \arrow[u, "G"] & {C^\infty_p([0,2\pi])} \arrow[r, "\scriptstyle{f\mapsto f^{(n)}(0)}"', two heads] \arrow[u, "G", bend left] & \mathbb C^{\mathbb N} \arrow[r, hook] & 0
\end{tikzcd}
\end{figure}
In particular, $\{g_i\}$ is not a dual pre-$F$-frame, thus providing a counterexample to corollary $5.4$. In other words, despite the fact
that for each $s$, $(f\mapsto \widehat f(n))_{n\in\mathbb Z}$ is a frame for $H_0^s([0,2\pi])$ and the range $G(\mathcal D)$ is isomorphic
to a complemented subspace of $\mathcal S$, there is no family of functions $h_n\in \mathcal D$ such that
\begin{equation*}
f(x)=\sum_{n\in\mathbb Z} \widehat f(n) h_n(x)
\end{equation*}
and so that such an expansion also holds in each $H^s_0$. Heuristically, this is not surprising, as the natural choice of functions, $e^{inx}$,
clearly does not belong to $\mathcal D$.

\vspace{2mm}

While Conjecture \ref{conj:KNS-frame} seems to be out of reach with our current methods, we provide below a partial description of the range of 
the analysis operators associated to Kulikov-Nazarov-Sodin frames.

\begin{theorem}
Let $\{x_n\}, \{y_n\}$ be a pair or $(p,q)-$separated sequences and consider the map $\tilde T:\mathcal S\oplus \mathcal S\to \mathfrak s\oplus \mathfrak s$ defined as
\begin{equation*}
\tilde T(f,g)=(f(x_n), \widehat g(y_n)).
\end{equation*}
Then $\tilde T$ is surjective.
\end{theorem}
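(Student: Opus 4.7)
The plan is to exploit a fundamental feature of this operator that distinguishes it from the ones considered previously in the paper: the two coordinates of $\tilde T(f,g)$ depend on two \emph{independent} functions $f$ and $g$. This immediately decouples the problem: given any target $(\mathbf a,\mathbf b)\in \mathfrak s\oplus \mathfrak s$, it suffices to separately find $f\in \mathcal S$ with $f(x_n)=a_n$ and $g\in \mathcal S$ with $\widehat g(y_n)=b_n$. Since the Fourier transform is an automorphism of $\mathcal S$, the second problem is equivalent, via $h=\widehat g$, to finding $h\in \mathcal S$ with $h(y_n)=b_n$. Hence I would reduce the theorem to the following lemma: for any $p$-separated sequence $\{z_n\}$ with $|z_n|\to \infty$, and any $\mathbf c\in \mathfrak s$, there exists $f\in \mathcal S$ with $f(z_n)=c_n$.

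To prove this lemma, I would use an explicit construction via disjoint bumps. Fix $\phi\in C_c^\infty(\R)$ with $\phi(0)=1$ and $\mathrm{supp}(\phi)\subset[-1/2,1/2]$, and set $A_n := 2/\min(|z_n-z_{n-1}|, |z_{n+1}-z_n|)$. Then the rescaled bumps $\phi(A_n(\,\cdot\, - z_n))$ have pairwise disjoint supports, so the sum
\[
f(x) := \sum_n c_n\, \phi\bigl(A_n(x-z_n)\bigr)
\]
defines a pointwise-well-defined function satisfying $f(z_n)=c_n$ by construction.

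The only remaining point is to check that $f\in \mathcal S$. Combining $p$-separation with $(p,q)$-supercriticality, one readily obtains the asymptotics $|z_n|\sim n^{1/p}$ together with the bound $A_n\lesssim (1+|z_n|)^{p-1}$. Since at most one bump is nonzero at any given point, and every such bump sits at distance $O(1/A_n)$ from $z_n$,
\[
\sup_x |x^\alpha \partial^k f(x)| \lesssim_{\phi,\alpha,k} \sup_n |c_n|\,(1+|z_n|)^\alpha\, A_n^k \lesssim \sup_n |c_n|\cdot n^{(\alpha+k(p-1))/p},
\]
which is finite for every $\alpha,k\in \N$ because $\{c_n\}\in \mathfrak s$ decays faster than any polynomial. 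Hence $f\in\mathcal S$, and applying the lemma to $\{x_n\}$ (with exponent $p$) and to $\{y_n\}$ (with exponent $q$, followed by inverse Fourier transform) completes the proof.

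I do not anticipate any genuine obstacle here. The key conceptual point is that the two-function freedom replaces the delicate rigidity of joint time-frequency sampling -- where non-trivial obstructions such as the Poisson summation formula and the $(DN)$-$(\Omega)$ dichotomy appear -- by two entirely \emph{independent} sampling problems, which are elementary. The only point requiring a modicum of care is the quantitative interplay between the growth of $|z_n|$ and the scales $A_n$, and this is governed precisely by the hypothesis of $(p,q)$-separation.
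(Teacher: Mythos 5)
Your proof is correct, and it takes a genuinely different and more elementary route than the paper. Both arguments exploit the decoupling: since $\tilde T$ depends on two independent functions $f$ and $g$, surjectivity reduces to two separate interpolation problems, $(f(x_n))_n$ and $(\widehat g(y_n))_n$ (the latter further reducing to plain sampling via $h := \widehat g$). From here the two proofs diverge. The paper appeals to an abstract structure theorem for closed, $*$-closed subalgebras of $\mathfrak s$, identifying the range $T_1^1(\mathcal S)$ as the closed span of indicators $\mathbbm 1_{A_i}$ over a partition of $\N$, then argues each $A_i$ is a singleton (because point evaluations separate points of $\mathcal S$), and concludes that the range is all of $\mathfrak s$. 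Note that the cited lemma only applies to \emph{closed} subalgebras, so applied to $\overline{T_1^1(\mathcal S)}$ it a priori yields density rather than surjectivity, and the paper does not separately establish closedness of the range; this is a point that would benefit from an explicit argument. Your construction sidesteps this subtlety entirely by exhibiting an actual preimage: the disjoint bumps scaled by $A_n=2/\min(|z_n-z_{n-1}|,|z_{n+1}-z_n|)$, with the $p$-separation giving the upper bound $A_n \lesssim (1+|z_n|)^{p-1}$ and the supercriticality (built into the paper's definition of a $(p,q)$-separated \emph{pair}) giving the polynomial growth $|z_n|\sim |n|^{1/p}$, so that the Schwartz seminorms are finite since $(c_n)\in\mathfrak s$. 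Your approach is therefore constructive, self-contained, and quantitatively transparent, at the cost of using the growth hypothesis that the paper's algebraic argument nominally ignores; the paper's approach, if the closedness were established, would be more abstract and independent of the precise growth rate.
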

\begin{proof}
It suffices to prove that both $\tilde T_1^1: f\mapsto (f(x_n))$ and $T_2^2: g\mapsto \widehat g(y_n)$ are surjective. The result follows from the following
description of subalgebras of $s$:
\begin{lemma}[Theorem 4.8 of \cite{Tomasz1}; Proposition 4.3 of \cite{Tomasz2}]\label{lemma : subalg}
Let $X$ be a closed and $*$-closed subalgebra of $\mathfrak s$. Then there exist a decomposition of $\mathbb N$ in disjoint subsets $Z,A_0,$ $A_1,\dots, A_n$,$\dots, \in 
\mathcal P(\mathbb N)$ (where $Z$ may be empty)
such that $X=\overline{\text{span}}_{i}(\mathbbm 1_{A_i})$.
\end{lemma}
Since $T_1^1(\mathcal S)$ is a $*-$closed subalgebra of $\mathfrak{s}$, it follows from the lemma that it is the closed span of $\mathbbm 1_{A_i}$.
Moreover, since there is no $x_i$ such that all $f\in\mathcal S$ satisfy $f(x_i)=0$, $Z_0=\emptyset$.
We claim that, upon rearranging the $A_i$, we have $A_i=\{i\}$. In fact, it suffices to prove that $|A_i|\le 1$, since $\{A_i\}$ is a partition. 
For the sake of contradiction, assume that $A_i\supset \{n,m\}$. This would imply that for any $f\in\mathcal S$ we would have
$f(x_n)=f(x_m)$, which is easily contradicted by the function $e^{-|x-x_n|^2}$. The same argument implies $T_2^2(\mathcal S)=\mathfrak{s}$ as well.
\end{proof}

The only constraints on $T(\mathcal S)$ are then the ones due to the summation formulas induced by the crystalline measures $\mu$ 
whose support is contained in $\{x_n\}$ and such that the support of $\widehat \mu$ is contained in $\{y_n\}$, indicating that a
possible avenue to prove complementedness is through the  study of the set of such crystalline measures - see, for instance, \cite{goncalves}. 

This set is known to be finite-dimensional in the subcritical case and infinite dimensional in the supercritical case thanks to the work of Kulikov, Nazarov and Sodin (see \cite[Section~7.3]{KNS}).

\begin{remark}
Since clearly $T(\mathcal S)=\tilde T(\Delta)$ and $\Delta=\{(f,g):f=g\}$ is complemented in $\mathcal S\oplus \mathcal S$, one
may wonder if this is enough to imply the complementedness of $T(\mathcal S)$. We provide here an example of why this is not the case:
let $X$ be a hereditarily indecomposable separable Banach space (see \cite{Gowers} for the construction of one). Recall that since $X$ is separable, there exists a surjection $T:\ell^1\to X$ such that
the induced map $S:\ell^1/\text{ker}(T)\to X$ is an isometry.
Then since $X$ is hereditarily indecomposable, no infinite dimensional subspace with infinite codimension can be complemented, so it suffices to 
choose $Y$ complemented in $\ell^1$ with infinite dimension and codimension.\par
 Even assuming that $T(\mathcal S)$ to be closed is not enough: let $X=\ell_\infty\oplus c_0$, $Y=\ell_\infty$,  $Z=\{0\}\oplus c_0$
and let $T:X\to Y$ be defined as $T(x,y)=x+y$. 
Then $T$ is a surjective map, $Z$ is complemented  and $T(Z)$ is a closed subset of $Y$ yet it fails to be complemented (since $c_0$ is not complemented
in $\ell_\infty$; see \cite[theorem~2.a.7]{Lindenstrauss})
\end{remark}

\printbibliography[heading=bibintoc,title=References]

\end{document}